
\documentclass{amsart}
\usepackage{amssymb}
\usepackage[mathscr]{eucal}
\usepackage{eufrak}
\usepackage{xspace}
\usepackage{mathtools}
\usepackage{color}

%
\newcommand{\R}{\mathbb{R}}

\newcommand{\Z}{\mathbb{Z}}
\newcommand{\N}{\mathbb{N}}
\newcommand{\C}{\mathbb{C}}
\newcommand{\Q}{\mathbb{Q}}

\newcommand{\LL}{\mathcal{L}}

\newcommand{\F}{{\mathcal F}}

\newcommand{\po}{\partial}

\newcommand{\wto}{\rightharpoonup}
\newcommand{\ve}{\varepsilon}

\newcommand{\la}{\langle}
\newcommand{\ra}{\rangle}

\newcommand{\loc}{{\text{\rm loc}}}
\newcommand{\X}{\times}

\renewcommand{\d}{\delta}
\renewcommand{\l}{\lambda}

\renewcommand{\a}{\alpha}
\renewcommand{\b}{\beta}

\newcommand{\s}{\sigma}
\newcommand{\g}{\gamma}

\renewcommand{\k}{\kappa}
\newcommand{\sgn}{\text{\rm sgn}}

\newcommand{\Om}{\Omega}
\newcommand{\om}{\omega}

\newcommand{\supp}{\text{\rm supp}\,}

\newcommand{\M}{{\mathcal M}}

\renewcommand{\div}{\text{\rm div}\,}

\renewcommand{\supp}{\text{\rm supp}\,}





\newcommand{\cO}{{\mathcal O}}

\newcommand{\bff}{{\mathbf f}}

\newcommand{\abf}{\mathbf{a}}

\newcommand{\Lip}{\text{\rm Lip}}

\newcommand{\B}{{\mathcal B}}

\newcommand{\bbE}{{\mathbb E}}
\newcommand{\bbT}{{\mathbb T}}
\newcommand{\bbP}{{\mathbb P}}
\newcommand{\bbG}{{\mathbb G}}

\newcommand{\BAP}{\operatorname{BAP}}
\newcommand{\AP}{\operatorname{AP}}

\newcommand{\BUC}{\operatorname{BUC}}

\newcommand{\dist}{\text{dist\,}}
\newcommand{\diam}{\text{diam\,}}
\renewcommand{\Lip}{\text{Lip\,}}

\newcommand{\ff}{\mathfrak{f}}

\newcommand{\mm}{\mathfrak{m}}

\newcommand{\G}{{\mathcal G}}

\newcommand{\Me}{\operatorname{M}}
\newcommand{\Sp}{\operatorname{Sp}}
\newcommand{\Gr}{\operatorname{Gr}}

\newcommand{\Medint}{\mkern12mu\mbox{\vrule height4pt
         depth-3.2pt
          width5pt}\mkern-16.5mu\int\nolimits}


\theoremstyle{plain}
\newtheorem{theorem}{Theorem}[section]
\newtheorem{corollary}{Corollary}[section]

\newtheorem{lemma}{Lemma}[section]
\newtheorem{proposition}{Proposition}[section]
\theoremstyle{definition}
\newtheorem{definition}{Definition}[section]
\theoremstyle{remark}

\numberwithin{equation}{section}

\begin{document}

\title[Invariants measures for stochastic conservation laws]
{Invariant measures for stochastic\\ conservation laws with Lipschitz flux \\ in the space of 
 almost periodic functions}

\author[C. Espitia]{Claudia Espitia}
\thanks{Claudia Espitia thankfully acknowledges the support from CNPq, through grant proc.\ 140268/2019-7}
\address{Instituto de Matem\'atica Pura e Aplicada - IMPA\\
         Estrada Dona Castorina, 110 \\
         Rio de Janeiro, RJ 22460-320, Brazil}
\email{claudia.duarte@impa.br}

\author[H.~Frid]{Hermano Frid}
\thanks{H.~Frid gratefully acknowledges the support from CNPq, through grant proc.\ 305097/2019-9, and FAPERJ, through grant proc.\ E-26/200.958/2021.}
\address{Instituto de Matem\'atica Pura e Aplicada - IMPA\\
         Estrada Dona Castorina, 110 \\
         Rio de Janeiro, RJ 22460-320, Brazil}
\email{hermano@impa.br}

\author[D.~Marroquin]{Daniel Marroquin}
\thanks{D.~Marroquin thankfully acknowledges the support from CNPq, through grant proc. 150118/2018-0.}

\address{Instituto de Matem\'{a}tica - Universidade Federal do Rio de Janeiro\\
Cidade Universit\'{a}ria, 21945-970, Rio de Janeiro, Brazil}
\email{marroquin@im.ufrj.br}

\keywords{stochastic partial differential equations, scalar conservation laws, invariant measures} 
\subjclass{Primary: 60H15, 35L65, 35R60}
\date{}

\begin{abstract} We study the well-posedness of the initial value problem and the long-time behaviour of almost periodic solutions to
stochastic scalar conservation laws in any space dimension, under the assumption 
of Lipschitz continuity of the flux functions and a non-degeneracy condition. We show the existence and uniqueness of an invariant measure in a separable subspace of the space of  Besicovitch almost periodic functions.
\end{abstract}

\maketitle

\section{Introduction}\label{S:1}

We study the existence and uniqueness of  invariant measures for  first-order scalar conservation 
laws with stochastic forcing in the setting of almost periodic functions. So, we consider the initial value problem 
\begin{align}
&du+\div \bff(u)\,dt=\Phi\,dW(t), \quad x\in\bbG_N, \ t\in(0,T), \label{e1.1}\\
& u(0,x)=u_0(x),\quad x\in\bbG_N,  \label{e1.2}
\end{align}
where  $\bff \in C_b^2(\R;\R^N)$ and  $\bbG_N$ denotes the Bohr compact associated with  
 the space of the almost periodic functions in $\R^N$, denoted by $\AP(\R^N)$, through a classical extension of the  Stone-Weierstrass theorem (see, e.g,  \cite{DS}, p.274--276, Theorem~18 and Corollary~19),
 whose detailed characterization and main properties  we explain later on. By $C_b^k$ we mean $k$-times differentiable with bounded continuous derivatives up to $k$-th order.  In particular, $\bff$ is Lipschitz continuous. 
%

The purpose of this paper is to extend to the context of the Besicovitch almost periodic functions the result on  the existence and uniqueness of invariant measures for periodic solutions of \eqref{e1.1} by Debussche and Vovelle in \cite{DV2}, at least in the case of Lipschitz continuous flux functions. As in \cite{DV2} we consider an additive noise $\Phi\,dW(t)$, whose detailed definition we recall further below.

More precisely, here, in the first part of the paper, we establish the well-posedness of \eqref{e1.1}-\eqref{e1.2} in $L^1(\bbG_N)$. This is achieved
after establishing the well-posedness  of the problem \eqref{e1.1}-\eqref{e1.2}, considered as a Cauchy problem in $\R^N$ with initial function 
in $\B^1(\R^N)\cap L^\infty(\R^N)$. Here, $\B^1(\R^N)$ is the space of Besicovitch almost periodic functions, which is isometrically isomorphic with $L^1(\bbG_N)$ as we describe with more details in the next section.  In sum,  $\AP(\R^N)$ is isometrically isomorphic with $C(\bbG_N)$, and this isomorphism, together with the mean-value property of the functions in $\AP(\R^N)$, yields the isometric isomorphism between $\B^1(\R^N)$ and $L^1(\bbG_N)$.    

Then, we consider a closed subspace of $L^1(\bbG_N)$, denoted by $L^1(\bbG_{*N})$, roughly defined as follows. 
Let $\AP_*(\R^N)$ be a closed subspace of $\AP(\R^N)$ whose functions have spectrum contained in a finitely  generated additive subgroup of $\R^N$. Then $\AP_*(\R^N)$ is isometrically isomorphic to $C(\bbG_{*N})$, where $\bbG_{*N}$ is the compactification of $\R^N$ generated by the algebra $\AP_*(\R^N)$, by the same extension of the Stone-Weierstrass theorem referred to above. 
Clearly, we have the isometric embedding $C(\bbG_{*N})\hookrightarrow C(\bbG_N)$. 
$L^1(\bbG_{*N})$ is then the closure in $L^1(\bbG_N)$ of  $C(\bbG_{*N})$. We then extend to $L^1(\bbG_{*N})$  the result of \cite{DV2} on the existence and uniqueness of invariant measure for \eqref{e1.1},  in the case where $\bff\in C_b^2(\R)$.

 The subject of the asymptotic behaviour of oscillatory solutions of deterministic conservation laws has a very long history that goes back to the first papers on scalar conservation laws (see, e.g., \cite{Ho, Ol, Lx}).  With the introduction of new compactness frameworks such as compensated compactness in, e.g., \cite{Ta, DP1, DP2}, kinetic formulation and averaging lemmas in, e.g., \cite{LPT}, this research gained a great impulse  (see, e.g., \cite{CF0,Fr0,CP, Pv0,Pv, FL, Pv2, GS, GS2}, among others). We also mention the elegant approach in \cite{Da} based on infinite dimension dynamical systems ideas. On the other hand, in the context of stochastic scalar conservation laws, the study of the asymptotic behaviour of periodic solutions was inaugurated with \cite{EKMS} for the Burgers equation, based on  infinite dimension dynamical systems ideas, which here seems to be the appropriate  approach. The result in \cite{EKMS} was extended to more general conservation laws in several space dimensions in \cite{DV2}. We refer to the latter for more references on the subject of invariant measures for stochastic conservation laws and other correlated stochastic partial differential equations. Also see \cite{DZ} for a general account on the basic concepts of infinite dimensional dynamical systems associated with stochastic equations. 
  The decay of periodic solutions of scalar conservation laws with rough flux in several space dimensions has been addressed in \cite{GS, GS2}; we do not address this type of stochastic equations here.  
 
 The theory of stochastic partial differential 
equations has had intense progress in the 
last three decades. We cite the 
treatise \cite{DPZ} for a basic 
general account of this theory and references.
More specifically, concerning the theory 
of stochastic conservation laws, we 
mention the first contributions by Kim \cite{KJ}, 
and Feng and Nualart \cite{FN}. The latter was further 
developed in Chen, Ding, and Karlsen in \cite{CDK}.
An inflection in the course of this theory 
was achieved by Debussche and Vovelle \cite{DV} 
with the introduction of the notion of stochastic 
kinetic solution, extending 
the corresponding deterministic concept introduced by 
Lions, Perthame, and Tadmor \cite{LPT}.  
We also mention the independent development 
in this theory made by Bauzet, Vallet, and Wittbold \cite{BVW1}.

The plan of this paper is as  follows. In Section~\ref{S:1'} we setup the framework on which we will establish our results. In Section~\ref{S:2} and Section~\ref{S:3}, we start by establishing the well-posedness of the Cauchy problem in $\R^N$ of Besicovitch almost periodic ($\BAP$) entropy solutions. Then, in Section~\ref{S:4}, we establish the existence of entropy solutions in $\bbG_N$ for \eqref{e1.1}-\eqref{e1.2} and introduce the notion of semigroup solutions for which the contraction property in $L^1(\bbG_N)$ holds. In Section~\ref{S:5} we discuss the method of reduction to the periodic case, originally introduced in \cite{Pv}, restricting the well-posedness analysis to $L^1(\bbG_{*N})$, establishing a perfect correspondence between $L^1(\bbG_{*N})$-semigroup solutions and entropy periodic solutions in $L^1(\bbT^P)$, where $P$ is the cardinality of the generators the additive subgroup of $\R^N$ containing the spectra of the functions in $C(\bbG_{*N})$. Finally, in Section~\ref{S:6}, we establish the existence and uniqueness of an invariant measure for \eqref{e1.1} in $L^1(\bbG_{*N})$.    

\section{Preliminaries, Notations  and Main Results} \label{S:1'}

We recall that the space of real-valued almost periodic functions in $\R^N$, $\AP(\R^N)$, was introduced by Bohr (see, e.g.,  \cite{B}) who also characterized it as the closure in $C_b(\R^N)$, with respect to the $\sup$-norm, of the finite linear combinations of the trigonometric functions $\cos 2\pi\l\cdot x$ and $\sin 2\pi\l\cdot x$, $\l \in\R^N$, or, equivalently, the real part of the closure in $C_b(\R^N;\C)$ of the complex space spanned by  $\{ e^{i2\pi\l\cdot x}\,:\, \l\in\R^N\}$. 
A well known theorem by Bochner (see, e.g.,  \cite{B}) 
states that a function  $g\in C_b(\R^N)$ belongs to
$\AP(\R^N)$   if and only if the set $\{g(\cdot+\l)\,:\, \l\in\R^N\}$ is contained in a compact of $C_b(\R^N)$. Also, it  is a sub-algebra of the space of bounded uniformly continuous functions $\BUC(\R^N)$, whose elements
$g$ possess a mean value $\Me(g)$, which can be defined by
$$
\Me(g):= \lim_{R\to\infty} R^{-N}\int_{C_R}g(x)\,dx,
$$
where
\begin{equation}\label{e1.CR}
C_R:=\{x=(x_1,\cdots,x_N)\in\R^N\,:\, |x_i|\le R/2\}.
\end{equation}
More precisely,  the mean-value $\Me(g)$ is the limit in the weak-star topology of $L^\infty(\R^N)$ of $g(\ve^{-1}\,\cdot)$ as $\ve\to0$, which always exists for functions in $\AP(\R^N)$.  
Moreover, it holds that $\Me(g)=\Me(g(\cdot+\l))$, uniformly in $\l\in\R^N$. We also use the notation
$$
\Me(g)=\Medint_{\R^N} g(x)\,dx.
$$
Given $g\in\AP(\R^N)$, the spectrum of $g$, $\Sp(g)$, is defined by
$$
\Sp(g):=\{\l\in\R^N\,:\, a_\l:=\Medint_{\R^N} e^{-i2\pi \l\cdot x}g(x)\,dx\ne0\}.
$$   
 A basic fact on $\Sp(g)$ is that it is a countable set, which follows easily from Bessel's inequality when we introduce in $\AP(\R^N)$ the inner product 
 $$
 \la g,h\ra:=\Me(gh).
 $$
For $g\in\AP(\R^N)$, we denote by $\Gr(g)$ the smallest additive group containing $\Sp(g)$.

It is also a well known fact  that $\AP(\R^N)$ is isometrically isomorphic with the space $C(\bbG_N)$, where 
$\bbG_N$ is the so called Bohr compact which is a compact topological group (see, e.g., \cite{DS,Loo}). More specifically,  we may view $\R^N$ as densely embedded in $\bbG_N$ and 
the functions in $\AP(\R^N)$ may be continuously extended to $\bbG_N$ and these extensions form the space  $C(\bbG_N)$. The group operation $+:\bbG_N\X\bbG_N\to\bbG_N$ is the extension to $\bbG_N$ of the addition operation in 
$\R^N$, $+:\R^N\X\R^N\to\R^N$, which is  continuous also in the topology induced by the embedding  of  $\R^N$ into $\bbG_N$ (see, e.g.,  \cite{DS2}, Theorem~XI.2). The Haar measure on $\bbG_N$, $\mm$, such that $\mm(\bbG_N)=1$, is the measure induced by the  mean value $g\mapsto \Me(g)$ defined for all $g\in\AP(\R^N)\sim C(\bbG_N)$.  
The topology 
in $\bbG_N$ is generated by the images by the referred isomorphism  (also called Gelfand transforms) of the functions $e^{-i2\pi\l\cdot x}$. Here and elsewhere in what follows, although we are mainly dealing with real functions,  we switch freely between the real and  the complex version of $\AP(\R^N)$ whenever we want to take advantage of the fact that the latter is generated by the complex exponentials  $e^{-i2\pi\l\cdot x}$. The translations $\tau_y:\R^n\to\R^N$, $\tau_yx=x+y$,
$y\in\R^N$, extend as homeomorphisms $\tau_y:\bbG_N\to\bbG_N$. Therefore, we can define directional derivatives $D_y g(x)$ of functions  $g\in C(\bbG_N)$ at a point $x\in\bbG_N$, for $y\in\R^N$, $|y|=1$,  by the usual formula, $D_yg(x):=\lim_{|h|\to0} (g(x+hy)-g(x))/h$ whenever the limit exists. In particular, when $y=e_i$, where $e_i$ is the $i$-th element of the canonical basis,
we get the partial derivatives $D_i g(x)$ or $\po_{x_i}g(x)$, or yet $\po g(x)/\po x_i$, $i=1,\cdots,N$.  We then denote by $C^\ell(\bbG_N)$ the space of functions in $C(\bbG_N)$ whose derivatives up to order $\ell$ also belong to $C(\bbG_N)$, for $\ell\in\N\cup\{0,\infty\}$. It is easy to see that $C^\ell(\bbG_N)$ is isometrically isomorphic with $\AP^\ell(\R^N)$, where the latter is the subspace of $\AP(\R^N)$ whose derivatives up to the order $\ell$ are in $\AP(\R^N)$, for  $\ell\in\N\cup\{0,\infty\}$.

As usual, if  $(\Om, \F, \bbP, (\F_t)_{t\ge0}, (\b_k(t))_{k\in\N})$ is a stochastic basis,  $W$ is a cylindrical Wiener process, $W=\sum_{k\ge1}\b_k e_k$, where $\b_k$ are independent Brownian processes, $(\F_t)_{t\ge0}$  is the completion of the canonical filtration associated with $(\b_k(t))_{k\ge1}$, and $\{e_k\}_{k\ge1}$ is a complete orthonormal system in a Hilbert space $H$.  The map $\Phi: H\to L^2(\bbG_N)$ is defined by $\Phi e_k=g_k$ where $g_k\in C^2(\bbG_N)\sim \AP^2(\R^N)$. We assume that there exists a sequence of positive numbers $(\a_k)_{k\ge1}$ satisfying $D_0:=\sum_{k\ge1}(\a_k+\a_k^2)<\infty$ such that 
\begin{equation}\label{e1.3-0}
|g_k(x)|+|\nabla_x g_k(x)|+|\nabla_x^2 g_k(x)|\le \a_k,\quad \forall x\in\R^N.
\end{equation}
Observe that from\eqref{e1.3-0} it follows 
\begin{align}
& G^2(x)=\sum_{k\ge1}|g_k(x)|^2\le D_0, \label{e1.3}\\
&\sum_{k\ge1}|g_k(x )-g_k(y)|^2\le D_0 |x-y|^2, \label{e1.4}
\end{align}
for all $x,y\in \R^N$. 

 Existence and uniqueness of an invariant measure for \eqref{e1.1} in the periodic case is proved in \cite{DV2} based on a well-posedness theory for \eqref{e1.1}-\eqref{e1.2} in $L^1(\bbT^N)$ which extends to $L^1(\bbT^N)$ the theory developed in \cite{DV} (see also \cite{DV'}). 
 From the latter it follows that given two 
initial data $u_0^1$ and $u_0^2\in L^1(\bbT^N)$, the following contraction property holds (see theorem~5 in \cite{DV2}) :
\begin{equation}\label{e1.4P}
\|u^1(t)-u^2(t)\|_{L^1(\bbT^N)}\le \|u_0^1-u_0^2\|_{L^1(\bbT^N)}, \ a.s.,
\end{equation}
which in turn allows the definition of a transition semigroup in $L^1(\bbT^N)$:
$$
P_t\phi(u_0)=\bbE(\phi(u(t))),\quad \phi\in \B_b(L^1(\bbT^N)).
$$

Here we establish a well-posedness theory for \eqref{e1.1}-\eqref{e1.2} in the space of Besicovitch almost periodic functions,  when the flux function $\bff(u)$ is Lipschitz continuous. Such well-posedness theory is ultimately extended to $L^1(\bbG_N)$. More specifically, we prove the following theorem, for which we also assume a non-degeneracy condition on \eqref{e1.1} (see \eqref{e1.GH}).
 
\begin{theorem}\label{T:3new0} Assume \eqref{e1.GH} holds. Given, $u_0\in L^1(\bbG_N)$, there exists a $L^1(\bbG_N)$-entropy solution of  \eqref{e1.1}-\eqref{e1.2} in the sense of Definition~\ref{D:4.1} which is the  $L^1(\Om; \\ L^\infty((0,T) ;L^1(\bbG_N)))$ limit of
$\BAP$-entropy solutions of \eqref{e2.1}-\eqref{e2.2} with initial data in $\AP(\R^N)$. We call the latter a $L^1(\bbG_N)$-semi-group solution. 
Moreover,
given two 
initial data $u_0^1$ and $u_0^2\in L^1(\bbG_N)$, the following contraction property holds for the corresponding
 $L^1(\bbG_N)$-semigroup solutions. 
 \begin{equation}\label{e1.4AP0}
\|u^1(t)-u^2(t)\|_{L^1(\bbG_N)}\le \|u_0^1-u_0^2\|_{L^1(\bbG_N)}, \ a.s.
\end{equation}
\end{theorem}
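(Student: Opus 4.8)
The plan is to construct the solution for general data by density and contraction, starting from the $\BAP$ theory already available for smooth data. From Sections~\ref{S:2}--\ref{S:3} I take two facts: for every $u_0\in\AP(\R^N)$ (hence automatically in $L^\infty(\R^N)$) the Cauchy problem \eqref{e2.1}--\eqref{e2.2} has a unique $\BAP$-entropy solution, and any two such solutions obey the contraction estimate in the Besicovitch norm $\B^1(\R^N)$. Under the isometric isomorphism $\B^1(\R^N)\cong L^1(\bbG_N)$, which also identifies the spatial mean value with integration against the Haar measure $\mm$, this Besicovitch contraction reads pathwise as $\|u^1(t)-u^2(t)\|_{L^1(\bbG_N)}\le\|u_0^1-u_0^2\|_{L^1(\bbG_N)}$ for the approximating solutions. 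This identification is the device that converts every Besicovitch-norm estimate from the whole-space theory into an $L^1(\bbG_N)$ estimate.

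Given an arbitrary $u_0\in L^1(\bbG_N)$, I would use the density of $C(\bbG_N)\sim\AP(\R^N)$ in $L^1(\bbG_N)$ to pick $u_0^n\in\AP(\R^N)$ with $u_0^n\to u_0$ in $L^1(\bbG_N)$, and let $u^n$ be the corresponding $\BAP$-entropy solutions. For each fixed $t$ the contraction gives $\|u^n(t)-u^m(t)\|_{L^1(\bbG_N)}\le\|u_0^n-u_0^m\|_{L^1(\bbG_N)}$ almost surely; since the right-hand side is independent of $t$ and the entropy solutions admit, almost surely, time-continuous $L^1(\bbG_N)$-representatives, the inequality holds simultaneously over a countable dense set of times and hence, by continuity, for the supremum over $t\in(0,T)$. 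Taking expectations shows that $(u^n)$ is Cauchy in $L^1(\Om;L^\infty((0,T);L^1(\bbG_N)))$; I define the $L^1(\bbG_N)$-semigroup solution $u$ as its limit, which is independent of the approximating sequence by one further use of the contraction.

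The substantive step is to verify that this limit is an $L^1(\bbG_N)$-entropy solution in the sense of Definition~\ref{D:4.1}, which I would carry out by passing to the limit in each ingredient of the formulation satisfied by the $u^n$. The drift and entropy-flux terms converge from the strong $L^1$ convergence together with the Lipschitz bound on $\bff$; the stochastic forcing converges in $L^2(\Om)$ by the It\^o isometry and the uniform bounds \eqref{e1.3}--\eqref{e1.4}, with adaptedness preserved under this convergence; and the nonnegative entropy-dissipation (kinetic) measures, uniformly bounded in expected total mass, converge weakly-$*$ along a subsequence to a nonnegative limit, so that the sign of the entropy inequality is retained. I expect the \emph{main obstacle} to be precisely this joint limit passage---keeping the stochastic integral, its It\^o correction, and the defect measure mutually consistent while preserving all the structural requirements of Definition~\ref{D:4.1}; the non-degeneracy condition \eqref{e1.GH} enters indirectly here, through the velocity-averaging compactness that underpins the construction of the approximating $\BAP$-solutions in Sections~\ref{S:2}--\ref{S:3}. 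Finally, the contraction \eqref{e1.4AP0} for two arbitrary data $u_0^1,u_0^2\in L^1(\bbG_N)$ follows by applying the contraction to approximating sequences and passing to the limit, the $L^1(\bbG_N)$-norm being continuous under the convergence just established.
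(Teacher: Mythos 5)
Your overall architecture (approximate $u_0\in L^1(\bbG_N)$ by $\AP(\R^N)$ data, use the $\B^1$-contraction of Proposition~\ref{P:2.1} to get a Cauchy sequence in $L^1(\Om;L^\infty((0,T);L^1(\bbG_N)))$, pass to the limit, and extend the contraction to the limits) is exactly the paper's, and that part of your argument is sound; since the initial data are deterministic, taking expectations of the pathwise contraction bound is even slightly cleaner than the paper's appeal to dominated convergence via \eqref{e3.new}. However, there is a concrete gap in the middle: you never establish that the approximating solutions $u^n$ satisfy Definition~\ref{D:4.1} in the first place. A $\BAP$-entropy solution is defined by the inequality \eqref{e2.3}, which is a Kruzhkov inequality on $\R^N$ against compactly supported test functions, whereas Definition~\ref{D:4.1} requires the inequality \eqref{e4.3} on $\bbG_N$ against the Haar measure. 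The isometry $\B^1(\R^N)\cong L^1(\bbG_N)$ converts \emph{norms}, but the entropy inequality is not a norm estimate: to transfer it one must choose test functions of the form $\varphi(t,x)=R^{-N}g(x/R)\tilde\varphi(t,x)$ with $\tilde\varphi\in C_c^\infty([0,T);\AP^\infty(\R^N))$, send $R\to\infty$, and use the $\B^1$-bounds on $w$ (as in \eqref{e2.7}) to kill the commutator terms so that the space integrals become mean values. That computation is the first half of the paper's proof and is missing from yours. Likewise, item (1) of Definition~\ref{D:4.1} (weak $L^1(\bbG_N)$-continuity in time) does not come for free; the paper obtains it by letting $\a\to\pm\infty$ in \eqref{e4.3} to derive the integral identity \eqref{e4.3eq} and then arguing in the standard way, and you need some version of this (you implicitly assume time-continuous representatives when upgrading the fixed-$t$ contraction to a supremum over $t$).

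A secondary point: you misidentify the obstacle in the final limit passage. Definitions~\ref{D:2.1} and \ref{D:4.1} are \emph{pathwise} Kruzhkov formulations for $w=u-J$; no stochastic integral, It\^o correction, or kinetic defect measure appears in \eqref{e4.3}. Consequently, once the $u^n$ are known to satisfy \eqref{e4.3}, passing to the limit under strong $L^1(\Om;L^\infty((0,T);L^1(\bbG_N)))$ convergence is elementary, using only the Lipschitz continuity of $\bff$ and the boundedness of $\sgn$; there is no tightness or weak-$*$ compactness argument to run at this stage. The machinery you describe (velocity averaging under \eqref{e1.GH}, martingale identification, defect measures) belongs to the construction of the $\BAP$-solutions in Section~\ref{S:3}, which you are entitled to quote as a black box, not to the density argument that proves this theorem.
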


%

In order to address the existence and uniqueness of invariant measures, we restrict our analysis to a subspace of $L^1(\bbG_N)$ whose elements possess spectra contained in a fixed finitely generated additive subgroup of $\R^N$.  
 Namely, for a certain finite set $\Lambda:=\{\l_1,\cdots, \l_P\}\subset\R^N$, linearly independent over $\Z$, let $C(\bbG_{*N})$ be the closed sub-algebra of  $C(\bbG_N)$ formed by the functions $g\in C(\bbG_N)$ such that $\Sp(g)$ is contained in the smallest additive group containing $\Lambda$, that is, the $\Z$-linear space generated by $\Lambda$.  $L^1(\bbG_{*N})$ is then the closure in $L^1(\bbG_N)$ of $C(\bbG_{*N})$.  We  prove that, when the noise functions $g_k$, $k\in\N$, belong to $C(\bbG_{*N})$ for $u_0\in L^1(\bbG_{*N})$, the $L^1(\bbG_N)$-semigroup solution given by Theorem~\ref{T:3new0}  gives rise to a transition semigroup in $L^1(\bbG_{*N})$ for \eqref{e1.1}, defined as in the periodic case:
  \begin{equation}\label{e1.trans}
P_t\phi(u_0)=\bbE(\phi(u(t))),\quad \phi\in \B_b(L^1(\bbG_{*N})).
\end{equation}

We then prove the existence and uniqueness of an invariant measure for \eqref{e1.1} on $L^1(\bbG_{*N})$, provided  the noise functions are in $C(\bbG_{*N})$, and, as in \cite{DV2}, besides the condition \eqref{e1.3-0}, satisfy
\begin{equation}\label{e1.zeromean}
\Medint_{\R^N}g_k(x)\,dx=0,\quad \forall k\in\N.
\end{equation}
For that, we also need a further non-degeneracy condition (see \eqref{e5.NDC}).

\begin{theorem}\label{T:1.1}  Assume condition \eqref{e5.NDC} holds, with 
$\iota^\vartheta(\d)$ defined by \eqref{e5.NDC1}.
 Then there is a unique  invariant measure for the transition semigroup $P_t$ in 
 $\B_b(L^1(\bbG_{*N}))$.
 \end{theorem}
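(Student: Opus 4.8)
The plan is to deduce Theorem~\ref{T:1.1} from the invariant measure theorem of Debussche and Vovelle \cite{DV2}, via the reduction to the periodic case developed in Section~\ref{S:5}. Since the generators $\Lambda=\{\l_1,\dots,\l_P\}$ are linearly independent over $\Z$, Kronecker's theorem shows that $x\mapsto Lx:=(\l_1\cdot x,\dots,\l_P\cdot x)\ (\mathrm{mod}\ \Z^P)$ has dense range in $\bbT^P$, so that $\bbG_{*N}$ may be identified with $\bbT^P$ and, by Weyl equidistribution, the Haar mean on $\bbG_{*N}$ corresponds to normalized Lebesgue integration on $\bbT^P$. Consequently every $g\in C(\bbG_{*N})$, whose spectrum lies in the group generated by $\Lambda$, factors as $g(x)=G(Lx)$ for a unique $G\in C(\bbT^P)$, and the assignment $T\colon g\mapsto G$ extends to an isometric isomorphism $T\colon L^p(\bbG_{*N})\to L^p(\bbT^P)$ for every $p\in[1,\infty]$. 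This is precisely the correspondence exploited in Section~\ref{S:5}.

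Under $T$ the problem \eqref{e1.1}--\eqref{e1.2} restricted to $\bbG_{*N}$ turns into a scalar conservation law on the genuine torus $\bbT^P$. Writing $A$ for the $P\times N$ matrix whose $j$-th row is $\l_j$, the chain rule $\partial_{x_i}(G\circ L)=\sum_{j=1}^P(\l_j)_i\,(\partial_{\t_j}G)\circ L$ gives $\div\bff(u)=\bigl(\sum_{j=1}^P\partial_{\t_j}\tilde f_j(v)\bigr)\circ L$ with $v:=Tu$, so that $u$ solves \eqref{e1.1} if and only if $v$ solves
\begin{equation}\label{eP.1}
dv+\sum_{j=1}^P\partial_{\t_j}\tilde f_j(v)\,dt=\tilde\Phi\,dW(t)\quad\text{on }\bbT^P,\qquad \tilde\bff:=A\bff,\quad \tilde\Phi:=T\circ\Phi.
\end{equation}
Because the correspondence of Section~\ref{S:5} intertwines the two entropy solution maps for every realization of $W$, it conjugates the associated transition semigroups, $\tilde P_t(\phi\circ T^{-1})=(P_t\phi)\circ T^{-1}$ for $\phi\in\B_b(L^1(\bbG_{*N}))$; hence $\mu\mapsto T_*\mu$ is a bijection between the invariant measures of $P_t$ and those of $\tilde P_t$, and it suffices to produce a unique invariant measure for \eqref{eP.1}.

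To this end I would verify that \eqref{eP.1} satisfies the hypotheses of \cite{DV2}. The reduced flux $\tilde\bff=A\bff$ lies in $C_b^2(\R;\R^P)$ because $\bff\in C_b^2(\R;\R^N)$. For the non-degeneracy, note that for $\xi\in\R^P$ one has $\xi\cdot\tilde\bff'(v)=(A^\Tr\xi)\cdot\bff'(v)=(\sum_j\xi_j\l_j)\cdot\bff'(v)$, so a uniform bound on the Lebesgue measure of the level sets $\{v:|\xi\cdot\tilde\bff'(v)|\le\d\}$ is exactly what \eqref{e5.NDC}, with $\iota^\vartheta(\d)$ given by \eqref{e5.NDC1}, is designed to provide; in particular \eqref{e5.NDC} cannot hold unless the $\l_j$ are linearly independent over $\R$ (equivalently $A^\Tr$ is injective and $A$ has rank $P$), for otherwise $\tilde\bff'(v)$ would be confined to a proper subspace and the level set for $\xi$ orthogonal to it would be all of $\R$. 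Granting full rank, the reduced noise $\tilde g_k:=Tg_k$ inherits the summable $C^2$ bounds \eqref{e1.3-0}, up to constants depending only on $\Lambda$, and has zero mean on $\bbT^P$ by \eqref{e1.zeromean}. With these verifications \cite{DV2} furnishes a unique invariant measure $\tilde\mu$ for $\tilde P_t$, whence $\mu:=(T^{-1})_*\tilde\mu$ is the desired unique invariant measure for $P_t$; as in \cite{DV2}, the zero-mean condition \eqref{e1.zeromean} renders the average $\Me(u)$ a pathwise conserved quantity, which is the sense in which uniqueness is to be understood.

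The main obstacle is the non-degeneracy step: making rigorous the equivalence between the intrinsic condition \eqref{e5.NDC}--\eqref{e5.NDC1} on $(\bff,\Lambda)$ and the Debussche--Vovelle non-degeneracy for $\tilde\bff=A\bff$ on $\bbT^P$, and deducing from the full rank of $A$ that $\tilde g_k$ retains the summable $C^2$ bounds needed in \cite{DV2}. A secondary, routine point is to confirm that $T$ is a Borel isomorphism of the separable spaces $L^1(\bbG_{*N})$ and $L^1(\bbT^P)$, so that the pushforward of measures and the invariance identity transfer without loss.
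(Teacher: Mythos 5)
Your proposal is correct and rests on the same two pillars as the paper's proof --- the isometric isomorphism $L^1(\bbG_{*N})\cong L^1(\bbT^P)$ of Lemma~\ref{L:5.elem} and the solution correspondence $u(t,x)=v(t,y(x))$ of Theorem~\ref{T:4.2} --- but the final step is packaged differently. You conjugate the transition semigroups and push the Debussche--Vovelle invariant measure forward through $\mathfrak{Y}=T^{-1}$, so that both existence and uniqueness are inherited wholesale from \cite{DV2}. The paper instead re-runs the two halves on $L^1(\bbG_{*N})$: existence by Krylov--Bogoliubov, i.e.\ tightness of the time averages $\mu_T=\frac1T\int_0^T P_t^*\d_{u_0}\,dt$, obtained by transporting the regularity estimate \eqref{e5.DV} through $\mathfrak{Y}$ and showing that the balls $S_R$ of ${\mathcal W}_*^{s,q}(\R^N)$ are compact in $L^1(\bbG_{*N})$; and uniqueness by transporting the decay of $\|v^1(t)-v^2(t)\|_{L^1(\bbT^P)}$ from \cite{DV2} to \eqref{e5.9}, combining it with the contraction \eqref{e2.4}, and extending the resulting identity $\la\nu,\phi\ra=\la\mu,\phi\ra$ from Lipschitz $\phi$ to indicators of Borel sets. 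Your transport argument is shorter and is legitimate precisely because Theorem~\ref{T:4.2} gives the intertwining for arbitrary data in $L^1(\bbG_{*N})$, not just trigonometric polynomials; that theorem (with the full-measure set $Z$ and the limit $z\to0$ behind it) is where the real work lies, not in the non-degeneracy translation you single out as the main obstacle: under the bijection $\b=n_1\l_1+\cdots+n_P\l_P\mapsto{\bf n}_\b$ the quantity \eqref{e5.NDC1} coincides verbatim with \eqref{e5.NDC2}, so \eqref{e5.NDC} and \eqref{e5.NDC3} are the same condition and the paper disposes of this point in one line. Two of your side remarks are worth keeping: that \eqref{e5.NDC} forces the $\l_j$ to be linearly independent over $\R$ (otherwise $|\b|$ can be made small while $|{\bf n}_\b|$ is large and, $\abf$ being bounded, the level set becomes all of $\R$), and that uniqueness must be understood modulo the conserved spatial mean --- a restriction the paper's statement also leaves implicit, since its uniqueness argument applies \eqref{e5.9} only to zero-mean data.
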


We recall that the existence of an invariant measure in the periodic setting in \cite{DV2} is obtained
as a consequence of the regularity estimate ({\em cf.} (41) in \cite{DV2}) 
$$
\bbE\|u\|_{L^1(0,T; W^{s,q}(\bbT^N)}\le k_0(\bbE\|u_0\|_{L^3(\bbT^N)}^3+1+T),
$$
for some $\k_0>0$, and an application of the Krylov-Bogoliubov theorem (see, e.g., \cite{DZ}). The uniqueness result in \cite{DV2} is also a consequence of this regularity estimate  translated to $[t,t+T]$, for any $t\ge0$, applied to two solutions $u^1, u^2$, corresponding to initial data  $u_0^1, u_0^2\in L^3(\bbT^N)$ ({\em cf.} (42) in \cite{DV2})
\begin{multline*}
\bbE\left(\int_t^{t+T}\|u^1(s)\|_{L^1(\bbT^N)}+\|u^2(s)\|_{L^1(\bbT^N)}\,ds\,|\,\F_t\right)\\
\le \k_1(\|u^1(t)\|_{L^3(\bbT^N)}^3+\|u^2(t)\|_{L^3(\bbT^N)}^3+1+T),
\end{multline*}
 with which it is shown in \cite{DV2} that one can define the following succession of almost surely finite stopping times,    
 $\{\tau_\ell\}_{\ell\in\N\cup\{0\}}$, with $\tau_0=0$:
 $$
 \tau_\ell:=\inf\{t\ge \tau_{\ell-1}+T\,:\, \|u^1(t)\|_{L^1(\bbT^N)}+\|u^2(t)\|_{L^1(\bbT^N)}\le 2\k_1\}.
 $$
 A crucial proposition is then proved in \cite{DV2} ({\em cf.} Proposition~12 in \cite{DV2})  showing that, for ${\bf a}(\xi):={\bf f}'(\xi)$, assuming ${\bf a}'(\xi)$ uniformly bounded in $\R$, then for any $\ve>0$, there exist $T>0$ and $\eta>0$ such that
 if 
 $$
 \|u(0)\|_{L^1(\bbT^N)}\le 2\k_1\quad\text{and}\quad \sup_{t\in[0,T]}\|\sum_{k\ge1}g_k(x)\b_k(t)\|_{W^{1,\infty}(\bbT^N)}\le \eta,
 $$ 
then 
 $$
 \frac1T\int_0^T\|u(s)\|_{L^1(\bbT^N)}\,ds\le \frac{\ve}2.
 $$
 This then implies, after some manipulations, that the sequence 
$\frac1T\int_{\tau_\ell}^{\tau_\ell+T}\|u^1(t)-u^2(t)\|_{L^1(\bbT^N)}$ converges, as $\ell\to\infty$, to 0 in probability, which implies, since $t\mapsto \|u^1(t)-u^2(t)\|_{L^1(\bbT^N)}$ is non-increasing, that 
$$
\lim_{t\to\infty}\|u^1(t)-u^2(t)\|_{L^1(\bbT^N)}=0,
$$
which finally implies the uniqueness of the invariant measure, as explained in Section~\ref{S:6}.
 

\section{Almost periodic solutions}\label{S:2}

We begin by studying the initial value problem corresponding to \eqref{e1.1}-\eqref{e1.2} in 
$\R^N$,
\begin{align}
&du+\div \bff(u)\,dt=\Phi\,dW(t), \quad x\in\R^N, \ t\in(0,T),\label{e2.1}\\
&u(0,x)=u_0(x),\quad x\in\R^N.\label{e2.2}
\end{align}

Let us define
$$
J(x,t):=\sum_{k=1}^\infty g_k(x) \b_k(t),
$$
and
$$
w=u-J.
$$
As in \cite{Pv}, let us define
 \begin{equation}\label{e2.Np}
 N_p(u)=\limsup_{R\to\infty}\left(R^{-N}\int_{C_R}|u(x)|^p\,dx\right)^{1/p},\quad p\ge1,
 \end{equation}
 with $C_R$ as in \eqref{e1.CR}.
 We observe that $N_p$ is a norm when restricted to $\AP(\R^N)$. Let $\B^1(\R^N)$ denote the completion of $\AP(\R^N)$ with respect to the norm $N_1$. For a function  $u\in \B^1(\R^N)$ we have
\begin{equation}\label{e2.N1}
N_1(u)=\Medint_{\R^N}|u(x)|\,dx.
\end{equation}

\begin{definition}\label{D:2.1} Let $u_0\in L^\infty\cap \B^1(\R^N)$ and  $T>0$ be given. A $L_\loc^1\cap \B^1(\R^N)$-valued stochastic process,  adapted
to $\{\F_t\}$, is said to be a Besicovitch almost periodic entropy solution ($\BAP$-entropy solution) of \eqref{e2.1}-\eqref{e2.2} if, for almost all $\om\in\Om$,
\begin{enumerate}
 \item $u$ is $L_\loc^1\cap \B^1(\R^N)$-weakly continuous on $[0,T]$,
 \item $u\in L^\infty([0,T]; L_\loc^1\cap\B^1(\R^N))$,  
 \item for all nonnegative $\varphi\in C_0^\infty ((-T,T)\X\R^N)$ and $\a\in\R$
 \begin{multline}\label{e2.3}
 \int_0^T\int_{\R^N} \{(|w-\a|-|u_0-\a|)\varphi_t+\sgn(w-\a)(\bff(w+J)-\bff(\a+J))\cdot\nabla\varphi\\
 -\sgn(w-\a)\bff'(\a+J)\cdot\nabla J\varphi\}\,dx\,dt\ge0.
 \end{multline}
 \end{enumerate}
 \end{definition}
 
 The following result is going to be proved  in the next section, where we  also state the non-degeneracy 
 condition which we need to assume on \eqref{e1.1}.  

\begin{theorem}~\label{T:2.1}  Assume \eqref{e1.GH} holds.  Given $T>0$, there is a $\BAP$-entropy solution of \eqref{e2.1}-\eqref{e2.2}. Furthermore, the solution is pathwise unique.
\end{theorem}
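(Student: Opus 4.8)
The plan is to solve \eqref{e2.1}--\eqref{e2.2} pathwise after removing the noise. Because the forcing is additive, $J=\sum_k g_k\,\b_k$ is itself the solution of the linear part, and \eqref{e1.3-0} together with $\sum_k(\a_k+\a_k^2)<\infty$ shows the series converges in $C^2(\bbG_N)\sim\AP^2(\R^N)$, uniformly on $[0,T]$, for a.e.\ $\om$; thus $J(\cdot,t,\om)$ is almost periodic with bounded $\nabla J,\nabla^2 J$, and $J$ is adapted. Setting $w=u-J$, the equation becomes, for each fixed $\om$, the deterministic non-autonomous scalar conservation law $\po_t w+\div\bff(w+J)=0$, whose Kruzhkov formulation is exactly \eqref{e2.3}. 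I would therefore freeze $\om$, solve this deterministic problem in $L^\infty\cap\B^1(\R^N)$, and recover adaptedness and $\om$-measurability at the end from the continuous dependence of $w$ on $J$.

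For existence I would use vanishing viscosity: solve $\po_t w^\ve+\div\bff(w^\ve+J)=\ve\Delta w^\ve$ with $w^\ve(0)=u_0$. Using the isometry $\B^1(\R^N)\sim L^1(\bbG_N)$ one may lift the problem to the compact group $\bbG_N$, where the parabolic equation is globally well posed and preserves almost periodicity, since $\bff\in C_b^2$ maps the compact orbit closure of $w^\ve+J$ into a compact set, so $\bff(w^\ve+J)\in\AP$. Two a priori estimates are uniform in $\ve$: an $L^\infty$ bound from the maximum principle and the Lipschitz character of $\bff$ combined with the boundedness of $\nabla J$, of the form $\|w^\ve(t)\|_{L^\infty}\le\|u_0\|_{L^\infty}+C\int_0^t\|\nabla J(s)\|_{L^\infty}\,ds$, and the resulting uniform $\B^1$ bound.

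The crucial step is strong compactness of $\{w^\ve\}$ in $L_\loc^1\cap\B^1(\R^N)$ as $\ve\to0$. This is delicate precisely because the unbounded domain (equivalently the infinite-dimensional group $\bbG_N$) rules out the usual $BV\hookrightarrow L^1$ compactness: one only controls variation per unit volume. Here the non-degeneracy hypothesis \eqref{e1.GH} enters. Passing to the kinetic form associated with the entropies $|w-\a|$ and applying a velocity-averaging argument transported to $\bbG_N$, the non-degeneracy forces the Young measure generated by $w^\ve$ to reduce to a Dirac mass, i.e.\ $w^\ve\to w$ strongly in $\B^1$ along a subsequence. Passing to the limit in the viscous entropy inequality---whose dissipation has the favorable sign---yields \eqref{e2.3}, so $u=w+J$ is a $\BAP$-entropy solution; the $\B^1$-weak continuity in $t$ and properties (1)--(2) of Definition~\ref{D:2.1} follow from the uniform bounds and the equation.

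Pathwise uniqueness I would obtain by Kruzhkov's doubling of variables adapted to the Besicovitch setting. Given two solutions $u_i=w_i+J$ sharing the same $J$, I would double the space--time variables, apply \eqref{e2.3} for $w_1$ with $\a=w_2(y,s)$ and symmetrically for $w_2$, and add. The $x$-dependent flux $\bff(\cdot+J)$ and the source $-\sgn(\cdot)\bff'(\a+J)\cdot\nabla J$ produce cross terms which, since $J$ is the same for both solutions and $\bff\in C_b^2$ with $J,\nabla J,\nabla^2 J$ bounded, combine into the standard contraction estimate for conservation laws with $x$-dependent flux. Taking the mean value $\Medint_{\R^N}$ (equivalently integrating over $\bbG_N$ against the Haar measure) kills the flux contributions at infinity, giving $\Medint_{\R^N}|w_1(t)-w_2(t)|\,dx\le\Medint_{\R^N}|u_0^1-u_0^2|\,dx$; uniqueness is the case $u_0^1=u_0^2$. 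I expect the main obstacle to be the compactness step: carrying the velocity-averaging machinery, normally built on the Fourier transform on $\R^N$, over to the Bohr compactification and coupling it cleanly with \eqref{e1.GH} to upgrade weak to strong $\B^1$ convergence; ensuring that every approximation remains almost periodic and that adaptedness survives the limits is a further point requiring care.
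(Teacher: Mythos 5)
Your overall architecture (vanishing viscosity, non-degeneracy/velocity averaging for compactness, Kruzhkov doubling for uniqueness) matches the paper's in outline, but the central compactness step as you describe it would fail. You propose to freeze $\om$, pass to $w=u-J$, and run a deterministic velocity-averaging argument. After that substitution the kinetic equation for $\chi_w$ has transport symbol $\tau+\bff'(\xi+J(t,x))\cdot\kappa$, which depends on $(t,x)$ through $J$; the classical averaging lemmas, and the hypothesis \eqref{e1.GH} itself, are formulated for an $x$-independent symbol $\tau+\kappa\cdot\abf(\xi)$ and do not apply to this variable-coefficient problem. The paper avoids exactly this by keeping the kinetic formulation for $u$ itself (equation \eqref{e3.9}), where $\abf(\xi)=\bff'(\xi)$ is $x$-independent at the price of stochastic source terms, and then invoking the \emph{stochastic} averaging lemma of Gess--Hofmanov\'a to get \eqref{e3.10}; this forces the whole stochastic compactness machinery (It\^o a priori estimates, tightness, Skorokhod representation, martingale solutions, and the Gy\"ongy--Krylov criterion) rather than a pathwise argument. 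Relatedly, your claim that averaging yields strong convergence in $\B^1$ is too strong: averaging gives local Sobolev regularity, hence compactness only in $L^1_\loc$ (the paper works in $\mathcal{X}=L^1(0,T;L^r_\loc)\cap C([0,T];W^{-1,r}_\loc)$ with cutoffs $\phi_\nu$); there is no uniform control at infinity that would upgrade this to convergence in the Besicovitch norm, and the paper instead recovers membership of the limit in $L^\infty([0,T];\B^1)$ from the separate estimate \eqref{e3.new} and extends to general data by the contraction property.

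On uniqueness, your doubling argument followed by taking the mean value $\Medint_{\R^N}$ yields only $N_1(w_1(t)-w_2(t))=0$, which is the paper's Proposition~\ref{P:2.1} but is \emph{not} pathwise uniqueness: the Besicovitch seminorm vanishes on nonzero functions (e.g.\ compactly supported ones), so $N_1(w_1-w_2)=0$ does not give $w_1=w_2$ a.e. The paper therefore proves a second, weighted stability estimate (Theorem~\ref{T:2.2}, with the weight $\rho_*$ and Gr\"onwall), which gives genuine $L^1_\loc$ uniqueness; this is also what is needed to run the Gy\"ongy--Krylov step. Your argument can be repaired by testing the same doubling inequality against $\rho_*$ instead of (only) the averaged cutoffs, but as written the uniqueness claim is incomplete.
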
 
 
  We next establish the fundamental contraction property for $\BAP$-entropy solutions of  \eqref{e2.1}-\eqref{e2.2}. 
 \begin{proposition}[$L^1$-mean contraction property]~\label{P:2.1} Let $u(t,x), v(t,x)\in L^\infty([0,T];\\ \B^1(\R^N))$  be two $\BAP$-entropy solutions of \eqref{e2.1}-\eqref{e2.2} with initial data $u_0(x), v_0(x)$. 
 Then,  a.s., for a.e.\ $t>0$ 
 \begin{equation}\label{e2.4} 
\Medint_{\R^N}|u(t,x)- v(t,x)|\,dx \le \Medint_{\R^N}|u_0(x)-v_0(x)|\,dx.
 \end{equation}
 \end{proposition}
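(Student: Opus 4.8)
The plan is to argue pathwise, for a.e.\ $\om\in\Om$, reducing \eqref{e2.4} to a deterministic Kruzhkov-type $L^1$-contraction (in the spirit of Panov's framework, \cite{Pv}) for the shifted unknowns $w_u:=u-J$ and $w_v:=v-J$, which solve the \emph{same} spatially inhomogeneous conservation law with flux $\bff(\,\cdot\,+J)$. The point is that, since the noise is additive and identical for both solutions, subtracting $J$ removes the stochastic integral and leaves, for each fixed $\om$, the two entropy inequalities \eqref{e2.3} driven by the same realization $J(t,x)=\sum_k g_k(x)\b_k(t,\om)$, which is bounded together with $\nabla J$ by \eqref{e1.3-0}. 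Thus no It\^o calculus is needed at this stage, and I would run Kruzhkov's doubling of variables on these two inequalities.

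Concretely, I would apply \eqref{e2.3} to $w_u(t,x)$ with the constant $\a$ replaced by the value $w_v(s,y)$, and \eqref{e2.3} to $w_v(s,y)$ with $\a$ replaced by $w_u(t,x)$, integrate each over the complementary variables, and add. As test function I take $\varphi(t,x,s,y)=\t_\e(t-s)\rho_\d(x-y)\,\psi\big(\tfrac{t+s}2\big)\z_R\big(\tfrac{x+y}2\big)$, with $\t_\e,\rho_\d$ standard mollifiers and $\z_R(\cdot)=\z(\cdot/R)$ a spatial cut-off. Passing to the diagonal $\e,\d\to0$, the temporal terms collapse onto $|w_u-w_v|=|u-v|$ tested against $\psi'$, while the flux terms split into a harmless piece carrying $\nabla\z_R$ on the slow variable and a singular piece in which the difference $\bff(\,\cdot+J(t,x))-\bff(\,\cdot+J(s,y))$ meets $\nabla\rho_\d(x-y)$.

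The main obstacle, and the place where the inhomogeneity of the flux enters, is the joint analysis of this singular piece together with the explicit source terms $-\sgn(\,\cdot\,)\bff'(\a+J)\cdot\nabla J\,\varphi$ of \eqref{e2.3}. Expanding $J(t,x)-J(s,y)=\nabla J\cdot(x-y)+o(|x-y|)$ and using $\int (x-y)_i\,\po_j\rho_\d(x-y)\,dx=-\d_{ij}$, the singular flux piece contributes, in the limit, exactly $-\sgn(u-v)\big(\bff'(u)-\bff'(v)\big)\cdot\nabla J$, which cancels precisely the limit $+\sgn(u-v)\big(\bff'(u)-\bff'(v)\big)\cdot\nabla J$ produced by adding the two source terms. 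Establishing this cancellation rigorously — i.e.\ controlling the $o(|x-y|)$ remainders against $\nabla\rho_\d=O(\d^{-1})$, which is where the $C^2$ regularity of the $g_k$ and the global bounds \eqref{e1.3-0} are used — is the technical heart of the argument; it is what makes the net $\nabla J$ contribution vanish and leaves a clean transport inequality for $|u-v|$ with no zeroth-order term.

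After this cancellation the doubled inequality reads, for every $R$, $\int_0^T\!\!\int_{\R^N}\{|u-v|\,\psi'\z_R+\sgn(u-v)(\bff(u)-\bff(v))\cdot\nabla\z_R\,\psi\}\,dx\,dt+(\text{initial-data terms})\ge0$. I would then divide by $R^N$ and let $R\to\infty$: since $\bff$ is Lipschitz, the flux term is dominated by $\Lip(\bff)\,|u-v|$, so the $\nabla\z_R=O(R^{-1})$ contribution is $O(R^{-1})\,R^{-N}\!\int_{C_R}|u-v|\to0$ because $u-v\in\B^1(\R^N)$, while $R^{-N}\!\int_{C_R}|u-v|\to\Medint_{\R^N}|u-v|$ by \eqref{e2.N1}. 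This yields $\int_0^T\psi'(\tau)\Medint_{\R^N}|u(\tau)-v(\tau)|\,dx\,d\tau+(\text{i.d.})\ge0$. Finally, choosing $\psi$ to approximate $\mathbf{1}_{[0,t]}$ and using the weak continuity in property~(1) of Definition~\ref{D:2.1} to pass to the limit and to read off the initial term $\Medint_{\R^N}|u_0-v_0|\,dx$, I would obtain \eqref{e2.4} for a.e.\ $t$ and a.s. Throughout, the Lipschitz assumption on $\bff$ is essential, since $u$ and $v$ are controlled only in $\B^1$ (no $L^\infty$ bound on the solutions themselves is invoked), so every flux difference must be dominated linearly by $|u-v|$.
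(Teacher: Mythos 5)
Your argument is correct and is essentially the proof given in the paper: both reduce the claim to a Kruzhkov-type inequality for $|w_u-w_v|$ with no zeroth-order term, and then conclude with the rescaled test function $R^{-N}g(x/R)$, the Lipschitz bound on $\bff$ to kill the $O(R^{-1})$ contribution carrying $\nabla g$, and the identification of the limit with the Besicovitch mean (the paper sandwiches $I_R(t)$ between $N_1$ and $k^N N_1$ and lets $k\to1$, which is your last step made precise). The one structural difference is that the paper does not carry out the doubling of variables at all: it quotes the resulting inequality \eqref{e2.5} directly from Kim \cite{KJ} (his (3.20)), where the cancellation of the $\sgn(\cdot)\,\bff'(\a+J)\cdot\nabla J$ source terms against the singular part of the flux difference --- the step you rightly single out as the technical heart --- is already established. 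If you do write that step out, two cautions. First, pathwise $J$ is only H\"older continuous in time, so the temporal increment $J(t,\cdot)-J(s,\cdot)$ paired with $\nabla\rho_\d=O(\d^{-1})$ is not negligible for a ``diagonal'' limit; you must send $\e\to0$ before $\d\to0$ (or couple the parameters), using only continuity of $t\mapsto J(t,\cdot)$, so that only the spatial increment $\nabla J\cdot(x-y)+O(|x-y|^2)$ survives to produce the cancellation --- this is where \eqref{e1.3-0} enters. Second, for the initial term, weak continuity alone gives the inequality in the wrong direction (lower semicontinuity of the norm); what is needed, and what the paper invokes, is the strong $L^1_\loc$ convergence $w(t,\cdot)\to w_0$ as $t\to0$, which comes from combining item (1) of Definition~\ref{D:2.1} with the initial-data term $|u_0-\a|\varphi_t$ built into \eqref{e2.3}. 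With those two adjustments your derivation reproduces \eqref{e2.5} and the rest of your argument coincides with the paper's.
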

 
 \begin{proof} Let $w=u-J$ and $\hat w=v-J$. We begin by recalling the fundamental Kruzhkov-type inequality as established in \cite{KJ} (see equation (3.20) in \cite{KJ}, its extension to the multi-dimensional case is straightforward) 
 \begin{multline}\label{e2.5}
 \int_0^T\int_{\R^N} \{|w(t,x)-\hat w(t,x)|\varphi_t\\
 +\sgn(w-\hat w)(\bff(w+J)-\bff(\hat w+J))\cdot\nabla\varphi\}\,dx\,dt\ge0,
 \end{multline}  
 for all nonnegative $\varphi\in C_0^\infty ((0,T)\X\R^N)$. As in \cite{Pv}, we choose $\varphi$ in \eqref{e2.5} of the
 form $\varphi=R^{-N}\chi_\nu(t) g(x/R)$, with $g\in C_c^\infty(\R^N)$ such that $0\le g(y)\le 1$, $g(y)\equiv1$ in the cube $C_1$, $g(y)=0$ in the complement of the cube $C_k$, $k>1$; also $\chi_\nu$ is a smooth approximation of 
 the indicator function of the interval $(t_0,t_1]$. We define the set
 $$
 F=\{t>0\,:\, (t,x)\ \text{is a Lebesgue point of $|w(t,x)-\hat w(t,x)|$ for a.e.\ $x\in\R^N$}\}.
 $$
 Then, using Fubini's theorem and density as in lemma~1.2 of \cite{Pv},  we deduce that $F\subset\R_+$ is a set of full measure and each $t\in F$ is a Lebesgue point of the functions 
 $$
 I_R(t)=R^{-N}\int_{\R^N}|w(t,x)-\hat w(t,x)| g(x/R)\,dx
 $$
 for all $R>0$ and all $g(y)\in C_c(\R^N)$. Then, for $t_0,t_1\in F$ we get
 \begin{multline*}
 I_R(t)\le I_R(t_0) \\+ R^{-N-1}\int_{(t_0,t_1)\X\R^N} \sgn(w-\hat w)(\bff(w+J)-\bff(\hat w+J))\cdot\nabla_y g(x/R)\,dt\,dx.
 \end{multline*}
 Then, since from Definition~\ref{D:2.1} it follows that $w(t,\cdot),\hat w(t,\cdot)$ converge as $t\to0$ in $L_\loc^1$ to $w_0(x),\hat w_0(x)$, we get  for all $t\in F$
 \begin{multline}\label{e2.6} 
 I_R(t)\le I_R(0)\\
 + R^{-N-1}\int_{(0,t)\X\R^N} \sgn(w-\hat w)(\bff(w+J)-\bff(\hat w+J))\cdot\nabla_y g(x/R)\,dt\,dx.
 \end{multline}
 Now,  we have
\begin{multline}\label{e2.7} 
R^{-N-1}\left|\int_{(0,t)\X\R^N} \sgn(w-\hat w)(\bff(w+J)-\bff(\hat w+J))\cdot\nabla_y g(x/R)\,dt\,dx
\right|\\
\le \Lip(\bff) \|\nabla g\|_\infty k^N R^{-1}\frac1{(kR)^N}\int_{(0,t)\X C_{Rk}}|w(s,x)-\hat w(s,x)|\,dt\,dx\\
\to 0, \quad\text{as $R\to\infty$.}
\end{multline}
Using the properties of $g$ we easily deduce that, for all $t\ge0$, 
$$
N_1(w(t,\cdot)-\hat w(t,\cdot))\le \limsup_{R\to\infty} I_R(t) \le  k^N N_1(w(t,\cdot)-\hat w(t, \cdot)),
$$
for any $k>1$, from which together with \eqref{e2.6} and \eqref{e2.7}, it follows, making $R\to\infty$, that
$$
N_1(w(t,\cdot)-\hat w(t,\cdot))\le k^N N_1(w_0-\hat w_0),
$$
for all $t\in F$, and then making $k\to 1$ we arrive at \eqref{e2.4}.   

 \end{proof}

From \eqref{e2.5}, we also get the uniqueness of entropy solutions to \eqref{e2.1}-\eqref{e2.2} with the stability in $L_\loc^1$, which we establish in the following theorem. Let $\rho_*$ be given by
\begin{equation}\label{e3.rho}
\rho_*(x):=C_N e^{-\sqrt{1+|x|^2}}, \quad \text{with $C_N$ such that $\int_{\R^N}\rho_*(x)\,dx=1$}.
\end{equation}
Observe that $\rho_*$ has the property that 
\begin{equation}\label{e2.200}
|D_x^\a\rho_*(x)|\le C_\a\rho_*(x)
\end{equation}
 for all multi-indices $\a\in\N^N$, for some constant $C_\a$ depending only on $\a$. 

\begin{theorem}\label{T:2.2}  Let $u(t,x), \hat u(t,x)$  be two $\BAP$-entropy solutions of \eqref{e2.1}-\eqref{e2.2} with initial data $u_0(x), \hat u_0(x)$. Then, a.s., for a.e.\ $t\in[0,T]$ we have  
 \begin{equation}\label{e3.11}
\int_{\R^N}|u(t,x)-\hat u(t,x)|\rho_*(x)\,dx\le C\int_{\R^N} |u_0(x)-\hat u_0(x)|\rho_*(x)\,dx,
\end{equation}
where $C$ depends on $T$ and on the flux function, but not on $\om$.
\end{theorem}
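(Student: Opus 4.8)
The plan is to insert the exponential weight $\rho_*$ from \eqref{e3.rho} into the Kruzhkov-type inequality \eqref{e2.5} and to close a Gronwall argument using the self-domination property \eqref{e2.200}, which is precisely what makes $\rho_*$ a better test weight than the compactly supported cutoffs used for Proposition~\ref{P:2.1}. Writing $w=u-J$ and $\hat w=\hat u-J$, so that $u-\hat u=w-\hat w$, I set
$$
q(t):=\int_{\R^N}|w(t,x)-\hat w(t,x)|\,\rho_*(x)\,dx,
$$
which is finite for a.e.\ $t$ because $w(t,\cdot)-\hat w(t,\cdot)\in\B^1(\R^N)$ has truncated integrals $\int_{C_R}|\cdot|$ of at most polynomial growth in $R$, while $\rho_*$ decays exponentially. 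Since $J(\cdot,0)=0$, one has $q(0)=\int_{\R^N}|u_0-\hat u_0|\rho_*\,dx$, so \eqref{e3.11} is exactly the bound $q(t)\le C\,q(0)$.

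Because \eqref{e2.5} demands $\varphi\in C_0^\infty$, I would first test it with $\varphi(t,x)=\chi_\nu(t)\,\rho_*^R(x)$, where $\rho_*^R(x)=\rho_*(x)\eta(x/R)$ for a fixed $\eta\in C_c^\infty$ with $\eta\equiv1$ on $B_1$ and $\supp\eta\subset B_2$, and $\chi_\nu$ is the smooth approximation of $\mathbf 1_{(t_0,t_1]}$ already employed in the proof of Proposition~\ref{P:2.1}. Sharpening $\nu$ and using, as in that proof and in lemma~1.2 of \cite{Pv}, that a.e.\ $t$ is a Lebesgue point of $t\mapsto\int|w-\hat w|\rho_*^R$, the time term yields $q^R(t_0)-q^R(t_1)$ (with $q^R$ built from $\rho_*^R$) and I obtain
$$
q^R(t_1)\le q^R(t_0)+\int_{t_0}^{t_1}\!\!\int_{\R^N}\sgn(w-\hat w)\bigl(\bff(w+J)-\bff(\hat w+J)\bigr)\cdot\nabla\rho_*^R\,dx\,dt.
$$

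For the flux term I would use the Lipschitz bound $|\bff(w+J)-\bff(\hat w+J)|\le\Lip(\bff)\,|w-\hat w|$ together with the splitting $\nabla\rho_*^R=\eta(\cdot/R)\nabla\rho_*+R^{-1}\rho_*\,\nabla\eta(\cdot/R)$. By \eqref{e2.200} with $|\a|=1$ the first summand is bounded by $C_1\rho_*^R$; the second is supported in the annulus $B_{2R}\setminus B_R$ and bounded there by $CR^{-1}\rho_*$, so its contribution is controlled by the tail $\int_{B_{2R}\setminus B_R}|w-\hat w|\rho_*\,dx$, which tends to $0$ as $R\to\infty$ by the integrability noted above. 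Passing to the limit $R\to\infty$ then gives
$$
q(t_1)\le q(t_0)+\Lip(\bff)\,C_1\int_{t_0}^{t_1}q(t)\,dt ,
$$
and Gronwall's inequality produces $q(t_1)\le q(t_0)\,e^{\Lip(\bff)C_1(t_1-t_0)}$. Letting $t_0\to0^+$ — legitimate since, as in the proof of Proposition~\ref{P:2.1}, $w(t_0,\cdot),\hat w(t_0,\cdot)$ converge in $L^1_\loc$ to their initial data, whence $q(t_0)\to q(0)$ after controlling the exponentially small tails uniformly in $t_0$ — yields \eqref{e3.11} with $C=e^{\Lip(\bff)C_1 T}$, depending only on $T$ and on $\Lip(\bff)$, hence not on $\om$.

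The step I expect to be the main obstacle is the control of the cutoff error, namely verifying that $q$ is finite and that $\int_{B_{2R}\setminus B_R}|w-\hat w|\rho_*\to0$. This is the precise point where replacing a compactly supported test function by the unbounded-support weight $\rho_*$ must be justified, and it rests on combining the exponential decay in \eqref{e3.rho} with the at-most-polynomial growth of truncated integrals forced by membership in $\B^1(\R^N)$. The remaining ingredients — the $\chi_\nu$ approximation and the Lebesgue-point passage — are parallel to the proof of Proposition~\ref{P:2.1} and are routine.
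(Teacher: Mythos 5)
Your proposal is correct and follows essentially the same route as the paper: the paper's proof also tests the Kruzhkov inequality \eqref{e2.5} with (an approximation of) $\varphi(s,x)=\mathbf{1}_{[0,t)}(s)\rho_*(x)$, invokes the Lipschitz continuity of $\bff$ together with the self-domination property \eqref{e2.200} of $\rho_*$, and closes with Gr\"onwall's inequality. The cutoff-and-tail analysis you carry out is exactly the content of the phrase ``by usual approximation'' that the paper leaves implicit.
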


\begin{proof} This follows directly from \eqref{e2.5}, using the properties of  $\rho_*$ and the Lipschitz continuity of $\bff$,  by a standard application of Gr\" onwall's inequality.  Indeed, choosing as test function, by usual approximation, $\varphi(s,x)={\bf 1}_{[0,t)}(s)\rho_*(x)$,  whenever $t$ is a Lebesgue point of the left-hand side of \eqref{e3.11}, where ${\bf 1}_{[0,t)}$ denotes the indicator function of the interval $[0,t)$,
we then obtain
\begin{multline*}
\int_{\R^N}|u(t,x)-\hat u(t,x)|\rho_*(x)\,dx\le \int_{\R^N} |u_0(x)-\hat u_0(x)|\rho_*(x)\,dx\\
+C_1\Lip(\bff)\int_0^t\int_{\R^N}|u(s,x)-\hat u(s,x)|\rho_*(x)\,dx\,ds.
\end{multline*}
\end{proof}

\section{Approximate solutions}\label{S:3}

For simplicity we begin by assuming $\bff\in C_b^3$ and $g_k, \nabla g_k, \nabla^2 g_k \in \AP(\R^N)$. 
The initial datum $u_0$ is deterministic and also for simplicity assume 
$$
u_0, \nabla u_0, \nabla^2 u_0\in \AP(\R^N).
$$

We consider the parabolic approximation for \eqref{e2.1}-\eqref{e2.2}
\begin{equation}\label{e3.1}
du+\div \bff(u)\,dt=\ve\Delta u+\Phi_\ve \,dW(t), \quad x\in\R^N, \ t\in(0,T).
\end{equation}

\begin{equation}\label{e3.2}
u(0,x)=u_{0\ve}(x),\quad x\in\R^N,
\end{equation}
where $u_{0\ve}$ is a trigonometric polynomial approximating $u_0$ in $\AP(\R^N)$, and $\Phi_\ve e_k=g_{k\ve}$ where $g_{k\ve}\equiv 0$ for $k>1/\ve$ and $g_{k\ve}$ is a trigonometric polynomial approximating $g_k$ uniformly in $\AP(\R^N)$ for $k\le 1/\ve$, $k\in\N$. 

Let $J_\ve=\int_0^t \Phi_\ve dW$. For $w=u-J_\ve$ we write \eqref{e3.1} as
\begin{equation}\label{e3.3}
w_t+\div \bff(w+J_\ve)-\ve \Delta w=\ve \Delta J_\ve.
\end{equation} 
 We can obtain a solution to \eqref{e3.1}, \eqref{e3.2}, with 
$w, \nabla w, \nabla^2 w, w_t\in C([0,T];L^\infty(\R^N))$ in a standard
way (see, e.g., \cite{Hr}) beginning  by a fixed point argument for the functional
\begin{equation}\label{e3.4}
\LL(w)(t)=K_\ve(t)*u_{0\ve}-\sum_{i=1}^N\int_0^t\po_{x_i} K_\ve(t-s)*f_i(w+J_\ve)(s)\,ds+ \int_0^tK_\ve(t-s)*\Delta J_\ve ,
\end{equation}
where $K_\ve(t,x)=K(\ve t,x)$, where $K$ is the heat kernel.
Moreover, the fixed point argument may be carried out in the Banach space $C([0,T];\AP(\R^N))$, since it is easy to see that $\LL$ takes $C([0,T];\AP(\R^N))$ into itself. So that
the fixed point $w\in C([0,T];\AP(\R^N))$.

In this way we get a solution $w$ of \eqref{e3.3}-\eqref{e3.2} such that 
$$
\|w_t\|_{L^\infty((0,T)\X\R^N)}+\|\nabla^2 w\|_{L^\infty((0,T)\X\R^N)}\le C(\om,\ve),
$$
Also, if $\rho_*$ is given by \eqref{e3.rho}, we can get
$$
\|w_t\|_{C([0,T];L_{\rho_*}^q(\R^N))}+\|\nabla^2 w\|_{C([0,T];L_{\rho_*}^q(\R^N))}\le C(q,\om,\ve),
$$
or each $q\ge 1$. It follows that 
\begin{equation}\label{e3.5}
\text{$u$ and $\nabla^2 u$ are continuous on $[0,T]\X\R^N$},
\end{equation}
\begin{equation}\label{e3.6}
\|u\|_{L^\infty((0,T)\X\R^N)}+\|\nabla^2 u\|_{L^\infty((0,T)\X\R^N)}\le C(\om,\ve),
\end{equation} 
\begin{equation}\label{e3.7}
\|u\|_{C([0,T]; L_{\rho_*}^q(\R^N))}+\|\nabla u\|_{C([0,T];L_{\rho_*}^q(\R^N))}\le C(\om,\ve),
\end{equation} 
for $q\ge1$, and $u$ satisfies \eqref{e3.1}, \eqref{e3.2}. 

\begin{lemma}
Let $u$ be the solution of \eqref{e3.1}, \eqref{e3.2} constructed above. Then, 
\begin{equation}\label{e3.120}
\bbE\sup_{0\le t\le T}\|u(t)\|_{\B^2}^2\le 2\|u_0\|_{\B^2}^2+CD_0T,
\end{equation}
where $D_0$ is as in \eqref{e1.3} and $C>0$ is a universal constant.

Moreover, we also have that
\begin{equation}\label{e3.8}
\bbE(\sup_{0\le t\le T}\|u\|_{L_{\rho_*}^2(\R^N)}^2)+\ve\bbE\left(\int_0^T\int_{\R^N}|\nabla u|^2\rho_*\,dx\,dt\right)\le C,
\end{equation}
 where $C$ depends on the flux functions and on the $g_k$'s but is independent of $\ve$. In particular,
$u$, $\ve^{1/2} \nabla u$ are bounded in $L^2(\Om\X(0,T); L_\loc^2(\R^N))$ uniformly with respect to $\ve$. 
\end{lemma}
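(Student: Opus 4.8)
The plan is to apply the It\^o formula to the two square functionals $t\mapsto\|u(t)\|_{\B^2}^2=\Medint_{\R^N}|u|^2\,dx$ and $t\mapsto\|u(t)\|_{L_{\rho_*}^2(\R^N)}^2=\int_{\R^N}|u|^2\rho_*\,dx$, using that the approximate solution $u(t)$ is $\AP(\R^N)$-valued and the noise $\Phi_\ve$ is finite-rank, so that in the first case the mean value $\Medint$ is a genuine inner product on the Hilbert space $\B^2$ and the (infinite-dimensional) It\^o formula applies directly. In both cases the three deterministic contributions are the convection term, the viscous term, and the It\^o trace correction, while the stochastic part is a real martingale that I control by the Burkholder--Davis--Gundy (BDG) inequality.

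For \eqref{e3.120} the decisive point is the cancellation of the convection term. Writing $\Gbf(\xi):=\int_0^\xi s\,\bff'(s)\,ds$, one has $u\,\bff'(u)\cdot\nabla u=\div\Gbf(u)$, and since the mean value of the derivative of an $\AP$ function vanishes, $\Medint_{\R^N}u\,\div\bff(u)\,dx=\Medint_{\R^N}\div\Gbf(u)\,dx=0$. Likewise $\Medint_{\R^N}u\,\Delta u\,dx=-\Medint_{\R^N}|\nabla u|^2\,dx\le0$, and the It\^o correction is $\Medint_{\R^N}\sum_k|g_{k\ve}|^2\,dx\le D_0$ by \eqref{e1.3}. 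Hence
\begin{equation*}
\|u(t)\|_{\B^2}^2+2\ve\int_0^t\Medint_{\R^N}|\nabla u|^2\,dx\,ds\le\|u_0\|_{\B^2}^2+D_0t+M_t,
\end{equation*}
where $M_t=2\int_0^t\Medint_{\R^N}u\,\Phi_\ve\,dW$ has quadratic variation $\langle M\rangle_t\le 4D_0\int_0^t\|u\|_{\B^2}^2\,ds$ by Cauchy--Schwarz and \eqref{e1.3}. Taking $\sup_{[0,T]}$, applying BDG, and absorbing the resulting $\tfrac12\,\bbE\sup_{[0,T]}\|u\|_{\B^2}^2$ via Young's inequality yields \eqref{e3.120} with the constant $2$ on $\|u_0\|_{\B^2}^2$.

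For \eqref{e3.8} the same scheme is used with the weight $\rho_*$, the essential requirement being that every bound stays independent of $\ve$; in particular the convection term cannot be absorbed into the $O(\ve)$ dissipation and must instead be integrated by parts onto the weight. Since $\bff$ is Lipschitz, $|\Gbf(\xi)|\le\tfrac12\Lip(\bff)\,\xi^2$, so using $|\nabla\rho_*|\le C\rho_*$ from \eqref{e2.200},
\begin{equation*}
-2\int_{\R^N}u\,\rho_*\,\div\bff(u)\,dx=2\int_{\R^N}\nabla\rho_*\cdot\Gbf(u)\,dx\le C\int_{\R^N}|u|^2\rho_*\,dx.
\end{equation*}
The viscous term gives $2\ve\int_{\R^N}u\,\rho_*\,\Delta u\,dx=-2\ve\int_{\R^N}\rho_*|\nabla u|^2\,dx-2\ve\int_{\R^N}u\,\nabla\rho_*\cdot\nabla u\,dx$, and the cross term is absorbed by Young's inequality into $\ve\int_{\R^N}\rho_*|\nabla u|^2\,dx$ plus $C\ve\int_{\R^N}|u|^2\rho_*\,dx$; the It\^o correction is $\int_{\R^N}\rho_*\sum_k|g_{k\ve}|^2\,dx\le D_0\int_{\R^N}\rho_*\,dx=D_0$. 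Collecting these estimates,
\begin{equation*}
\|u(t)\|_{L_{\rho_*}^2(\R^N)}^2+\ve\int_0^t\int_{\R^N}\rho_*|\nabla u|^2\,dx\,ds\le\|u_0\|_{L_{\rho_*}^2(\R^N)}^2+D_0t+C\int_0^t\|u\|_{L_{\rho_*}^2(\R^N)}^2\,ds+\wt{M}_t,
\end{equation*}
with $\langle\wt{M}\rangle_t\le 4D_0\int_0^t\|u\|_{L_{\rho_*}^2(\R^N)}^2\,ds$. Taking $\sup_{[0,T]}$ and expectation, absorbing the martingale by BDG, and applying Gr\"onwall's inequality gives $\bbE\sup_{[0,T]}\|u\|_{L_{\rho_*}^2(\R^N)}^2\le C(\|u_0\|_\infty^2+D_0T)e^{CT}$; taking expectation of the displayed inequality at $t=T$ then bounds $\ve\,\bbE\int_0^T\int_{\R^N}\rho_*|\nabla u|^2\,dx\,dt$. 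Since $\int_{\R^N}\rho_*\,dx=1$, all constants are independent of $\ve$, which is \eqref{e3.8}.

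The main obstacle is organizational rather than deep: making the It\^o formula rigorous for the mean-value functional $\Medint_{\R^N}|u|^2\,dx$ (legitimate precisely because the approximate solutions are $\AP(\R^N)$-valued and $\Phi_\ve$ is finite-rank), and, for \eqref{e3.8}, arranging the computation so that the convection term is transferred onto $\rho_*$ rather than onto $\nabla u$, which is exactly what keeps every constant independent of the vanishing viscosity $\ve$.
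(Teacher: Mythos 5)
Your proposal is correct and follows essentially the same route as the paper: It\^o's formula for $|u|^2$, cancellation (resp.\ absorption via $|\nabla\rho_*|\lesssim\rho_*$ and $|\mathbf{F}(u)|\le\tfrac12\Lip(\bff)|u|^2$) of the convection term, Burkholder--Davis--Gundy plus Young for the martingale, and Gr\"onwall for the weighted estimate. The one point you flag as "organizational" is handled in the paper exactly as one would hope: It\^o is applied against compactly supported test functions $\varphi$, and the mean-value identity (resp.\ the $\rho_*$-weighted inequality) is obtained by taking $\varphi(x)=R^{-N}g(x/R)$ with $R\to\infty$ (resp.\ a sequence of $\varphi$'s converging to $\rho_*$), which is the rigorous substitute for applying It\^o directly to the functional $\Medint_{\R^N}|u|^2\,dx$.
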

\begin{proof}
Applying the It\^o's formula, for any nonnegative test function $\varphi\in C_c^\infty(\R^N)$ we have that
\begin{equation}\label{e3.120'}
\begin{aligned}
\int_{\mathbb{R}^N}&|u(t,x)|^2\varphi\, dx \\
&=\int_{\mathbb{R}^N}|u_0(x)|^2\varphi(x)\, dx + \int_0^t\int_{\R^N} 2\mathbf{F}(u)\cdot\nabla\varphi\, dx\, ds  \\
& -\int_0^t\int_{\mathbb{R}^N}2\ve |\nabla u|^2\varphi\, dx\, ds -\int_0^t\int_{\mathbb{R}^N}2\varepsilon u\nabla u\cdot \nabla \varphi\, dx\, ds \\
&+\sum_{k\ge 1}\int_0^t\int_{\mathbb{R}^N}2 u\, g_k\, \varphi dx\, d\beta_k(s)+\int_0^t\int_{\mathbb{R}^N} G^2(x)\varphi(x)\, dx\, ds,
\end{aligned}
\end{equation}
where $\mathbf{F}'(u)=u\bff'(u)$ and $G^2$ is as in \eqref{e1.3}, and we omit the superscript $\ve$ for simplicity. We then choose $\varphi$ by setting 
$\varphi(x)=R^{-N}g(x/R)$, with $g\in C_c^\infty(\R^N)$ such that $0\le g(y)\le 1$, $g(y)\equiv1$ in the cube $C_1$, $g(y)=0$ in the complement of the cube $C_k$, $k>1$, as
in the proof of Proposition~\ref{P:2.1}. Making $R\to\infty$ we obtain
\begin{equation}\label{e3.120*}
\|u(t)\|_{\B^2}^2\le \|u_0\|_{\B^2}^2
+\sum_{k\ge 1}\int_0^t 2 \la u, g_k\ra_{\B^2}\, d\beta_k(s)+D_0t.
\end{equation}
Then, we take the $\sup_{0\le t\le T}$ in both sides and apply Bukholder-Davis-Gundy inequality and then Young's inequality to obtain 
\begin{align*}
\bbE\sup_{0\le t\le T}\|u(t)\|_{\B^2}^2&\le  \|u_0\|_{\B^2}^2 +C\bbE\left(\int_0^T  \bigl(\|u\|_{\B^2}^2\sum_{k\ge 1}\| g_k\|_{\B^2}^2\bigr)\, ds\right)^{1/2}+ D_0T\\
   &\le\|u_0\|_{\B^2}^2 +C\bbE\left(\int_0^T  \bigl(\|u\|_{\B^2}^2 D_0\bigr)\, ds\right)^{1/2}+ D_0T\\
   &\le\|u_0\|_{\B^2}^2 +C\bbE\left(\sup_{0\le t\le T}\|u\|_{\B^2}^2D_0T\right)^{1/2}+D_0T\\
   &\le\|u_0\|_{\B^2}^2 +\frac{1}{2}\bbE\sup_{0\le t\le T}\|u(t)\|_{\B^2}^2+ CD_0T,
\end{align*}
which yields \eqref{e3.120}.

Similarly, starting from \eqref{e3.120'} we see that we may take a sequence of $\varphi$'s converging to $\rho_*$, so that, using \eqref{e2.200} and noting that $2\ve|u\nabla u|\le \ve|\nabla u|^2+|u|^2$, we obtain 
\begin{multline}\label{e3.120*'}
\|u(t)\|_{L_{\rho_*}^2(\R^N)}^2+\ve\int_0^t\int_{\R^N}|\nabla u|^2\rho_*\,dx\,dt \le \|u_0\|_{L_{\rho_*}^2(\R^N)}^2 \\
+C\int_0^t\|u(s)\|_{L_{\rho_*}^2(\R^N)}^2\,ds +\sum_{k\ge 1}\int_0^t 2 \la u, g_k\ra_{L_{\rho*}^2}\, d\beta_k(s)+D_0t,
\end{multline}
where we used the fact that $|\mathbf{F}(u)|\le \frac{\Lip(f)}{2} |u|^2$.
Taking expectation and applying Gronwall's inequality we get 
$$
\sup_{0\le t\le T}\bbE\|u(t)\|_{L_{\rho_*}^2(\R^N)}^2\le \|u_0\|_{L_{\rho*}^2(\R^N)}^2+D_0T.
$$

Then coming back to \eqref{e3.120*'}, taking $\sup_{0\le t\le T} $ in both sides, again applying   Bukholder-Davis-Gundy inequality and then Young's inequality we finally obtain \eqref{e3.8}.
\end{proof}

Let us now consider the kinetic formulation for \eqref{e3.1}. Namely, if $\chi_u(\xi)=1_{\xi< u}(\xi)-1_{\xi<0}$, for $S\in C ^2(\R)$, we have
$$
S(u)-S(0)=\int_{\R}S'(\xi)\chi_u(\xi)\,d\xi.
$$
Using this fact and It\^o's formula applied to $S(u)$ we deduce that $\ff(t,x,\xi)=\chi_{u(t,x)}(\xi)$ satisfies
\begin{equation}\label{e3.9}
\ff_t+\abf(\xi)\cdot\nabla_x\ff -\ve\Delta \ff= (\ve|\nabla_x u|^2\d_{\xi=u}(\xi)- \frac12 G_\ve^2\d_{\xi=u}(\xi))_\xi - 
\d_{\xi=u} \Phi\,dW,
\end{equation}
where $\abf(\xi)=\mathbf{f}'(\xi)$.

In order to make use  of a regularity estimate from the stochastic averaging lemma in \cite{GH}, we need to impose a non-degeneracy condition ({\em cf.} \cite{GH}).
For $\k=(\k_1,\cdots,\k_N)\in\Z^N$, let $|\k|^2=\k_1^2+\k_2^2+\cdots+\k_N^2$.  For $J\gtrsim1$, $\d>0$, 
we suppose there exist $\a\in(0,1)$ such that
\begin{equation}\label{e1.GH}
\iota_*(\d,J):=\sup_{\tiny{\begin{matrix} \tau\in\R, \k\in\Z^N\\ |\k|\sim J\end{matrix}}}|\{\xi\in\R\,:\, |\tau+\k\cdot \abf(\xi)|\le \d\}|
\lesssim \left(\frac{\d}{J}\right)^\a,
\end{equation}
 where, for a measurable subset $A\subset \R$,  we denote $|A|$ the one-dimensional Lebesgue  measure of $A$.
Here we employ the usual notation $x\lesssim y$, if $x\le Cy$, for some absolute constant $C>0$, and $x\sim y$, if $x\lesssim y$ and $y\lesssim x$.

Assuming \eqref{e1.GH} holds, we can then obtain a regularity estimate from the stochastic averaging lemma in \cite{GH} of the type
\begin{equation}\label{e3.10}
\bbE\|\phi u\|_{L^1(0,T; W^{s,r}(\R^N))}\le C( \|\phi u_0\|^3_{L_{\om,x}^3}+1),
\end{equation}
for each $\phi\in C_c^\infty(\R^N)$.  In particular, given an open bounded set, with smooth boundary, $\mathcal{O}$, there exists a constant $C_{\mathcal{O}}>0$, independent of $\ve>0$, such that
 \begin{equation}\label{e3.10'}
\bbE\| u^\ve\|_{L^1(\Om\X[0,T]; W^{s,r}(\mathcal{O}))}\le C_{\mathcal{O}},
\end{equation}

As to the existence, we apply a reasoning similar to the one in \cite{FLMNZ}, which follows the method in \cite{DHV} (see also \cite{Ha}). Namely: (i) to apply Kolmogorov's continuity lemma; (ii)  to prove of the tightness of the laws which, by Prokhorov's theorem, implies the compactness of the laws in the weak topology of measures; (iii) to apply Skorokhod's representation theorem; (iv) to show that the limit a.e.\  given by Skorokhod's representation theorem is a martingale entropy solution;  (iv) to apply the Gy\" ongy-Krylov criterion for convergence in probability, using the uniqueness of the solution of \eqref{e2.1}-\eqref{e2.1}, therefore  obtaining the convergence in $L_\loc^1$ of the solutions of \eqref{e3.1}-\eqref{e3.2} to a $L_\loc^1$ function which is an entropy solution of \eqref{e2.1}-\eqref{e2.2}.  We now line up the main results that follow the just described streamline. To begin with, through the application of Kolmogorov's continuity lemma, we have the following.

\begin{proposition}\label{P:3.2} Let $u^\ve$ be the solution of \eqref{e3.1}-\eqref{e3.2}. For all $\l\in(0,1/2)$, for each 
$\phi\in C_c^\infty(\R^N)$, there exists a constant $C_\phi>0$, depending on $\phi$ but  independent of $\ve>0$, such that 
 for all $\ve\in(0,1)$
 $$
 \bbE\|\phi u^\ve\|_{C^\l([0,T]; H^{-1}(\R^N))}\le C_\phi.
 $$
 \end{proposition}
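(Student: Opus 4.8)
The plan is to apply the Kolmogorov continuity theorem, so the whole proof reduces to a moment bound on the time increments of $\phi u^\ve$ in $H^{-1}(\R^N)$, uniform in $\ve\in(0,1)$. Fix an even integer $p\ge 2$, to be chosen large at the end in terms of $\l$. Since $u^\ve$ is a strong solution of \eqref{e3.1} (recall $u^\ve$ and $\nabla^2 u^\ve$ are continuous), for $0\le s\le t\le T$ it satisfies the integral identity
\begin{equation*}
\phi\bigl(u^\ve(t)-u^\ve(s)\bigr)= -\phi\int_s^t\div\bff(u^\ve)\,dr+\ve\,\phi\int_s^t\Delta u^\ve\,dr+\phi\int_s^t\Phi_\ve\,dW=:I_1+I_2+I_3.
\end{equation*}
I would estimate each $I_j$ in $H^{-1}$ by duality, testing against $\psi\in H^1(\R^N)$ with $\|\psi\|_{H^1}\le1$ and integrating by parts so that no derivative falls on $u^\ve$ beyond first order.

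For the flux term, $\langle I_1,\psi\rangle=\int_s^t\!\int \nabla(\phi\psi)\cdot\bff(u^\ve)\,dx\,dr$, and since $\bff$ is bounded and $\phi$ has compact support this is $\le C_\phi|t-s|\,\|\psi\|_{H^1}$; hence $\|I_1\|_{H^{-1}}\le C_\phi|t-s|$ deterministically. For the stochastic term, $I_3=\sum_{k}\phi g_{k\ve}\,(\beta_k(t)-\beta_k(s))$ is Gaussian in $H^{-1}$ with deterministic coefficients, so by the equivalence of Gaussian moments (Burkholder--Davis--Gundy) $\bbE\|I_3\|_{H^{-1}}^p\le C_p\bigl(\sum_k\|\phi g_{k\ve}\|_{H^{-1}}^2\,|t-s|\bigr)^{p/2}\le C_{\phi,p}D_0^{p/2}|t-s|^{p/2}$, using \eqref{e1.3-0}--\eqref{e1.3} and $\|\phi g_{k\ve}\|_{H^{-1}}\le C_\phi\a_k$, uniformly in $\ve$. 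For the viscous term, $\langle I_2,\psi\rangle=-\ve\int_s^t\!\int\nabla u^\ve\cdot\nabla(\phi\psi)\,dx\,dr$, whence by the Cauchy--Schwarz inequality in time and the elementary gain $\ve^{1/2}\le1$,
\begin{equation*}
\|I_2\|_{H^{-1}}\le C_\phi\,\ve\int_s^t\|\nabla u^\ve\|_{L^2(\supp\phi)}\,dr\le C_\phi\,|t-s|^{1/2}\,X^{1/2},\qquad X:=\ve\int_0^T\|\nabla u^\ve\|_{L^2(\supp\phi)}^2\,dr.
\end{equation*}

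The crux of the argument, and the step I expect to be the main obstacle, is a uniform-in-$\ve$ bound on the higher moments $\bbE[X^{p/2}]$. The energy estimate \eqref{e3.8} only gives $\bbE[X]\le C$, which (taking $p=2$) would yield increments of order $|t-s|$ and leave no room for Kolmogorov. I would upgrade \eqref{e3.8} to $\bbE\bigl(\ve\int_0^T\!\int_{\R^N}|\nabla u^\ve|^2\rho_*\,dx\,dr\bigr)^{m}\le C_m$, uniformly in $\ve$, for every $m\ge1$, by repeating the It\^o--Burkholder--Davis--Gundy--Gronwall scheme of the Lemma above now applied to the $m$-th power of the weighted energy $\|u^\ve(t)\|_{L_{\rho_*}^2(\R^N)}^2$; since $\rho_*$ is bounded below by a positive constant on the compact set $\supp\phi$, this controls $\bbE[X^{m}]\le C_{\phi,m}$. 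Combining the three bounds, and using $|t-s|\le T$ to absorb the $I_1$ contribution into the slower rate, gives
\begin{equation*}
\bbE\bigl\|\phi\bigl(u^\ve(t)-u^\ve(s)\bigr)\bigr\|_{H^{-1}}^p\le C_{\phi,p}\,|t-s|^{p/2},\qquad 0\le s\le t\le T,
\end{equation*}
uniformly in $\ve\in(0,1)$.

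Finally I would invoke Kolmogorov's continuity theorem in the form that controls the H\"older seminorm in $p$-th mean: the bound above is of the type $\bbE\|\cdot\|^p\le C|t-s|^{1+\beta}$ with $\beta=p/2-1$, so $\phi u^\ve$ has a modification in $C^\l([0,T];H^{-1}(\R^N))$ with $\bbE\|\phi u^\ve\|_{C^\l}^p\le C_{\phi,p}$ for every $\l<1/2-1/p$; the base point at $t=0$ is harmless since $u_0$ is deterministic and $\|\phi u_0\|_{H^{-1}}\le C_\phi$. Given $\l\in(0,1/2)$, choosing the even integer $p>2/(1-2\l)$ ensures $\l<1/2-1/p$, and Jensen's inequality turns the $p$-th mean bound into $\bbE\|\phi u^\ve\|_{C^\l([0,T];H^{-1}(\R^N))}\le C_\phi$, as claimed.
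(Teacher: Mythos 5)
Your proposal is correct and follows exactly the route the paper indicates (the paper gives no detailed argument for this proposition, merely invoking Kolmogorov's continuity lemma as in \cite{FLMNZ}, \cite{DHV}, \cite{Ha}): decompose the increment of $\phi u^\ve$ into flux, viscous and stochastic parts, bound each in $H^{-1}$, and feed the resulting $p$-th moment increment estimate $\bbE\|\phi(u^\ve(t)-u^\ve(s))\|_{H^{-1}}^p\le C_{\phi,p}|t-s|^{p/2}$ into Kolmogorov's criterion. You also correctly identify the one point that genuinely needs supplementing, namely upgrading \eqref{e3.8} to higher moments of the weighted viscous energy $\ve\int_0^T\int|\nabla u^\ve|^2\rho_*\,dx\,dr$, which follows from the same It\^o--Burkholder--Davis--Gundy--Gronwall scheme applied to powers of $\|u^\ve(t)\|_{L_{\rho_*}^2}^2$.
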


In particular, for a fixed $1<r<2$,  for all $\ve\in(0,1)$, for all open bounded with smooth boundary 
$\mathcal{O}$ ,we have 
\begin{equation}\label{e3.100}
 \bbE\|u^\ve\|_{C^\l([0,T]; W^{-1,r}(\mathcal{O}))}\le C_{\mathcal{O}},
 \end{equation}
 for some constant $C_{\mathcal{O}}>0$ independent os $\ve>0$.
 
For a fixed $1<r<2$ such that the regularity estimate \eqref{e3.10}, from \cite{GH}, holds,  let us denote, 
$$
\mathcal{X}=L^1(0,T; L_\loc^r(\R^N))\cap C([0,T]; W_\loc^{-1,r}(\R^N)).
$$
First, we recall that $u\in \mathcal{X}$ if, for each $\phi\in C_c^\infty(\R^N)$, $\phi u\in L^1(0,T; L^r(\R^N))\cap C([0,T];W^{-1,r}(\R^N))$.  Also, convergence of a sequence $u_n\to u$ in 
$\mathcal{X}$ means that for all $\phi\in C_c^\infty(\R^N)$, $\phi u_n\to \phi u$ in  
$L^1(0,T; L^r(\R^N))\cap C([0,T];W^{-1,r}(\R^N))$. Observe that we can endow $\mathcal{X}$ with a metric with respect to which it becomes a metric separable space. Indeed, for $\nu\in\N$, let 
$\mathcal{O}_\nu$ be the open ball  of radius $\nu$ around the origin in $\R^N$, and let $\phi_\nu\in C_c^\infty(\R^N)$, $0\le \phi_\nu\le 1$, with $\phi_\nu\equiv1$ on $\mathcal{O}_\nu$ and $\phi_\nu\equiv 0$, outside $\mathcal{O}_{\nu+1}$. 
 For $u\in\mathcal{X}$, let $\rho_\nu(u)$ be the norm of $\phi_\nu u$ in $L^1(0,T; L^r(\R^N))\cap C([0,T];W^{-1,r}(\R^N))$. We can then define the following metric in $\mathcal{X}$,
$$
\d(u,v)=\sum_{\nu=1}^\infty 2^{-\nu} \frac{\rho_\nu(u-v)}{1+\rho_\nu(u-v)}, \quad u,v \in\mathcal{X}.
$$

 Concerning the tightness of the laws $\mu_{u^\ve}$, $\ve\in(0,1)$,  associated to the solutions of \eqref{e3.1}-\eqref{e3.2}, $0<\ve<1$, we have the following result ({\em cf.}, e.g., the proof of proposition~5.3 in \cite{FLMNZ}). 
   
\begin{proposition}\label{P:3.2''} Let  $\mu_{u^\ve}$ be the law defined in $\mathcal{X}$ associated with $u^\ve$. The set 
$\{\mu_{u^\ve}\,:\, \ve\in(0,1)\}$ is tight and, therefore, relatively weakly compact in $\mathcal{X}$.
\end{proposition}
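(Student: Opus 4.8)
The plan is to exhibit, for each $\eta>0$, a set $K\subset\mathcal{X}$ that is relatively compact in $(\mathcal{X},\d)$ and carries mass $\mu_{u^\ve}(K)\ge 1-\eta$ for every $\ve\in(0,1)$; tightness then follows, and relative weak compactness is obtained from Prokhorov's theorem, since $(\mathcal{X},\d)$ is a separable metric space. The three uniform-in-$\ve$ bounds at our disposal are: the sup-in-time weighted estimate \eqref{e3.8}, which localizes---$\rho_*$ being bounded below on each ball $\mathcal{O}_\nu$ and $L^2(\mathcal{O}_\nu)\hookrightarrow L^r(\mathcal{O}_\nu)$ since $r<2$---to $\bbE\sup_{[0,T]}\|u^\ve(t)\|_{L^r(\mathcal{O}_\nu)}^2\le C_\nu$; the spatial regularity estimate \eqref{e3.10'}, giving $\bbE\|u^\ve\|_{L^1(0,T;W^{s,r}(\mathcal{O}_\nu))}\le C_\nu$; and the time-H\"older estimate \eqref{e3.100} of Proposition~\ref{P:3.2}, giving $\bbE\|u^\ve\|_{C^\l([0,T];W^{-1,r}(\mathcal{O}_\nu))}\le C_\nu$ with $\l\in(0,1/2)$.

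Given a sequence $R=(R_\nu)_{\nu\ge1}$ of positive radii, I would let $K_R$ consist of those $u\in\mathcal{X}$ satisfying, for every $\nu$, the three bounds $\|u\|_{L^1(0,T;W^{s,r}(\mathcal{O}_\nu))}\le R_\nu$, $\|u\|_{C^\l([0,T];W^{-1,r}(\mathcal{O}_\nu))}\le R_\nu$, and $\sup_{[0,T]}\|u(t)\|_{L^r(\mathcal{O}_\nu)}\le R_\nu$. Relative compactness of $K_R$ in $\mathcal{X}$ is verified componentwise on each $\mathcal{O}_\nu$ and then assembled by a diagonal argument compatible with the metric $\d$. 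For the $L^1(0,T;L_\loc^r)$ factor I would invoke the Aubin--Lions--Simon compactness theorem along $W^{s,r}(\mathcal{O}_\nu)\hookrightarrow\hookrightarrow L^r(\mathcal{O}_\nu)\hookrightarrow W^{-1,r}(\mathcal{O}_\nu)$, the first embedding being compact because $s>0$ and $\mathcal{O}_\nu$ is bounded with smooth boundary: boundedness in $L^1(0,T;W^{s,r})$ supplies the spatial control, while the uniform time-translation bound $\|u(\cdot+h)-u(\cdot)\|_{L^1(0,T-h;W^{-1,r}(\mathcal{O}_\nu))}\le R_\nu T h^\l$, read off from the $C^\l$ constraint, supplies Simon's temporal equicontinuity hypothesis. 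For the $C([0,T];W_\loc^{-1,r})$ factor I would apply Arzel\`a--Ascoli with values in $W^{-1,r}(\mathcal{O}_\nu)$: the $C^\l$ bound gives equicontinuity in $t$, while the sup-in-time $L^r(\mathcal{O}_\nu)$ bound together with $L^r(\mathcal{O}_\nu)\hookrightarrow\hookrightarrow W^{-1,r}(\mathcal{O}_\nu)$ gives relative compactness of $\{u(t)\}$ in $W^{-1,r}(\mathcal{O}_\nu)$ at each fixed $t$.

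Finally, to make the exceptional probabilities summable I would apply Markov's inequality to each expectation bound: $\bbP$ of the event that $u^\ve$ violates the $\nu$-th constraint is at most $C_\nu/R_\nu$ for the $L^1(W^{s,r})$ and $C^\l$ constraints and at most $C_\nu/R_\nu^2$ for the $L^r$ constraint, uniformly in $\ve$. Choosing $R_\nu\to\infty$ fast enough that the corresponding series are each bounded by $\eta$, a union bound yields $\bbP(u^\ve\notin K_R)\le\eta$, hence $\mu_{u^\ve}(K_R)\ge 1-\eta$ for all $\ve$, which is the claimed tightness. I expect the principal difficulty to lie in the bookkeeping of the two distinct compactness mechanisms on the two factors of $\mathcal{X}$---in particular, confirming that the H\"older-in-time control in the weaker space $W^{-1,r}$ genuinely furnishes Simon's translation hypothesis, and that the diagonal extraction over the exhausting balls $\mathcal{O}_\nu$ yields a single set compact for $\d$ rather than merely compact on each $\mathcal{O}_\nu$; the compact Sobolev embeddings and the localization of \eqref{e3.8} are otherwise routine.
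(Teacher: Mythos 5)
Your proposal is correct and follows the same overall skeleton as the paper's proof: build a candidate set $K_R$ from the three uniform-in-$\ve$ bounds \eqref{e3.8}, \eqref{e3.10'} and \eqref{e3.100}, verify its relative compactness in $\mathcal{X}$ componentwise over the exhausting balls $\mathcal{O}_\nu$ with a diagonal extraction, and then conclude tightness by Chebyshev's inequality and a union bound over $\nu$. The one genuinely different ingredient is the compactness mechanism for the $L^1(0,T;L^r_\loc)$ factor. You invoke the Aubin--Lions--Simon theorem along $W^{s,r}(\mathcal{O}_\nu)\hookrightarrow\hookrightarrow L^r(\mathcal{O}_\nu)\hookrightarrow W^{-1,r}(\mathcal{O}_\nu)$, feeding Simon's time-translation hypothesis with the $C^\l$ bound; this is clean and entirely adequate, since the translation estimate $\|u(\cdot+h)-u(\cdot)\|_{L^1(0,T-h;W^{-1,r}(\mathcal{O}_\nu))}\le R_\nu T h^\l$ does hold in the weaker space $W^{-1,r}$, which is exactly what Simon's criterion permits. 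The paper instead argues by hand: having first obtained strong convergence in $C([0,T];W^{-1,r}(\mathcal{O}_{\nu+1}))$, it applies the interpolation inequality $\|\varphi\|_{W^{2,r}}\le\|\varphi\|_{W^{1,r}}^{s/(1+s)}\|\varphi\|_{W^{2+s,r}}^{1/(1+s)}$ to $\varphi=(-\Delta)^{-1}(\phi_\nu\psi_k)$ with Dirichlet conditions, combining the $C([0,T];W^{-1,r})$ convergence with the $L^1(0,T;W^{s,r})$ boundedness and elliptic regularity to upgrade to strong convergence in $L^1(0,T;L^r(\mathcal{O}_{\nu+1}))$. The two routes buy essentially the same thing; yours outsources the work to a standard compactness theorem, the paper's is self-contained. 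Two cosmetic differences are harmless: you localize the weighted bound \eqref{e3.8} to $L^r(\mathcal{O}_\nu)$ (valid, as $\rho_*$ is bounded below on bounded sets) where the paper keeps it global in $L^\infty(0,T;L^2_{\rho_*})$ and extracts a weak-$*$ limit first; and you use a sequence of radii $R_\nu$ where the paper uses dyadic weights $2^\nu(C_{\mathcal{O}_{\nu+1}}+1)R$ with a single parameter. The only point to tidy is that for $u\in\mathcal{X}$ the constraint $\sup_{[0,T]}\|u(t)\|_{L^r(\mathcal{O}_\nu)}\le R_\nu$ should be read as an essential supremum, with the pointwise relative compactness in $W^{-1,r}(\mathcal{O}_\nu)$ obtained on a dense set of times and then propagated to all $t$ by the equi-H\"older bound, exactly as the paper does.
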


\begin{proof} Let us define
\begin{multline*}
K_R=\{ u\in\mathcal{X}\,:\, 
 \|u\|_{L^\infty(0,T; L_{\rho_*}^2(\R^N))}\le R,\\ 
 \|\phi_\nu u\|_{C^\l([0,T];W^{-1,r}(\R^N))}\le 2^\nu(C_{\mathcal{O}_{\nu+1}} +1)R, 
 \\  \|\phi_\nu u\|_{L^1(0,T;W^{s,r}(\R^N))} \leq 2^\nu (C_{\mathcal{O}_{\nu+1}} + 1) R,
 \quad \forall \nu \geq 1\},
 \end{multline*}
where $C_{\mathcal{O}_{\nu+1}}$ is greater than or equal to the constant $C_{\mathcal{O}}$ in
\eqref{e3.10'} and \eqref{e3.100} for $\mathcal{O}=\mathcal{O}_{\nu+1}$, and $\phi_\nu$ is as above. We claim that $K_R$ is a relatively compact subset of $\mathcal{X}$. Indeed, if $\psi^k$ is a sequence in $K_R$, then $\|\psi^k\|_{L^\infty(0,T; L_{\rho_*}^2(\R^N))}\le R$, and
for all $\nu\in\N$, we have that $\phi_\nu \psi^k$ is bounded in 
$C^\l([0,T];W^{-1,r}(\mathcal{O}_{\nu+1}))\cap L^1(0,T;W^{s,r}(\mathcal{O}_{\nu+1}))$. 
Since $\|\psi^k\|_{L^\infty(0,T; L_{\rho_*}^2(\R^N))}\le R$, we can extract a subsequence, still denoted $\psi^k$,
and a $\psi\in L^\infty(0,T; L_{\rho_*}^2(\R^N))$ such that $\psi^k\wto \psi$ in the weak* topology of $L^\infty(0,T; L_{\rho_*}^2(\R^N))$,
notice that this space is the topological dual of $L^1(0,T; L_{\rho_*}^2(\R^N))$.
For all $\nu\in\N$, for a.e.\ $t\in[0,T]$, we have that $\|\phi_\nu \psi^k(t)\|_{L^2(\R^N)}\le C_{\phi_\nu}R$, for all $k\in\N$. In particular, we can find a dense set in $[0,T]$ and a subsequence, still denoted $\psi^k$, such that $\phi_\nu\psi_k(t)\to \phi_\nu\psi(t)$ in $W^{-1,2}(\R^N)$ strongly , for a dense set of $t\in[0,T]$, for all 
$\nu\in\N$, and so, also in  $W^{-1,r}(\R^N)$.  Since $\phi_\nu\psi^k$ is bounded in $C^\l([0,T];W^{-1,r}(\mathcal{O}_{\nu+1}))$, we deduce that  $\phi_\nu\psi^k\to\phi_\nu \psi$ strongly in $C([0,T];W^{-1,r}(\R^N))$, for all $\nu\in\N$.  
 On the other hand,  by interpolation we have, for all $\varphi\in C_c^\infty(\mathcal{O}_{\nu+1})$,  
$$
\|\varphi\|_{L^1(0,T;W^{2,r}(\cO_{\nu+1}))}\le \|\varphi\|_{L^1(0,T;W^{1,r}(\cO_{\nu+1}))}^{s/(1+s)}\|\varphi\|_{L^1(0,T;W^{2+s,r}(\cO_{\nu+1}))}^{1/(1+s)}.
$$
Then, by density, taking $\varphi=(-\Delta)^{-1}(\phi_\nu\psi_k)$, where by $-\Delta$ we mean the minus Laplacian operator with 0 Dirichlet condition on $\po\cO_{\nu+1}$, we conclude that $\phi_\nu\psi_k$ strongly converges in $L^1(0,T;L^r(\cO_{\nu+1}))$, using that $(-\Delta)^{-1}$ isomorphically takes  $L^1(0,T;L^r(\cO_{\nu+1}))$ onto
$L^1(0,T; W^{2,r}\cap W_0^{1,r}(\cO_{\nu+1}))$.

 In this way, by a standard diagonal argument, we obtain a subsequence of $\psi^k$, still denoted $\psi^k$, such that $\phi_\nu\psi^k$ converges in
$C([0,T];W^{-1,r}(\R^N))\cap L^1([0,T];L^r(\R^N))$, for all $\nu\in\N$, which implies the compactness of $K_R$ in $\mathcal{X}$. 

As for the tightness of $\mu_{u^\ve}$, we have
\begin{multline*}
\mu_{u^\ve}(\mathcal{X}\setminus K_R)\le
\bbP\Big(\|u^\ve\|_{L^\infty(0,T;L_{\rho_*}^2(\R^N))}>R\Big)\\
+\sum_{\nu=1}^\infty \bbP\Big(\|\phi_\nu u^\ve\|_{C^\l([0,T];W^{-1,r}(\R^N))}>2^\nu(C_{\mathcal{O}_{\nu+1}} +1)R\Big)\\
+\sum_{\nu=1}^\infty \bbP\Big(\|\phi_\nu u^\ve\|_{L^1(0,T;W^{s,r}(\R^N))}>2^\nu(C_{\mathcal{O}_{\nu+1}} +1)R\Big)\\
\le \frac1{R^2}\bbE\sup_{[0,T]}\|u^\ve(t)\|_{L_{\rho_*}^2(\R^N)}^2\\
+\sum_{\nu=1}^\infty \frac1{2^\nu(C_{\mathcal{O}_{\nu+1}} +1)R}\bbE\left(\|\phi_\nu u^\ve\|_{C^\l([0,T];W^{-2,r}(\R^N))}+\|\phi_\nu u^\ve\|_{L^1(0,T;W^{s,r}(\R^N))}\right)\\
\le \frac{C}{R^2}+\frac2{R},
\end{multline*}
by using \eqref{e3.8},   \eqref{e3.10'} and \eqref{e3.100}, which implies the tightness of $\mu_{u^\ve}$.  

\end{proof}

With Proposition~\ref{P:3.2''} at hand, we apply Prokhorov's theorem to obtain a subsequence $u^n$ such that $\mu_{u^n}$ weakly converges over $\mathcal{X}$. We can then apply Skorokhod's theorem and obtain a further subsequence still denoted $u^n$, a new probability space $(\tilde \Om,\tilde\bbP)$, and a subsequence $\tilde u^n$, with $\mu_{\tilde u^n}=\mu_{u^n}$, such that $\tilde u^n:\tilde \Om\to\mathcal{X}$ converges a.s.\ to $\tilde u:\tilde \Om\to \mathcal{X}$.  

\begin{proposition}\label{P:3.2'''} There exists a probability space $(\tilde \Om,\tilde \F,\tilde \bbP)$ with a sequence of $\mathcal{X}$-valued random variables $\tilde u^n$, $n\in\N$, and
$\tilde u$ such that:
\begin{enumerate}
\item[(i)] the laws of $\tilde u^n$ and $\tilde u$ under $\tilde \bbP$ coincide with $\mu^n$ and $\mu$, respectively;
\item[(ii)] $\tilde u^n$ converges $\tilde \bbP$-almost surely to $\tilde u$ in the topology of $\mathcal{X}$.

\end{enumerate}
\end{proposition}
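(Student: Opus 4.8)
The plan is to derive this statement directly from Prokhorov's and Skorokhod's theorems, once the ambient space $\mathcal{X}$ is recognized as Polish. First I would record that $(\mathcal{X},\d)$ is a complete separable metric space. Separability was already observed in the construction of the metric $\d$ preceding Proposition~\ref{P:3.2''}. For completeness, note that each building block $L^1(0,T;L^r(\mathcal{O}_\nu))\cap C([0,T];W^{-1,r}(\mathcal{O}_\nu))$ is a Banach space; if $u_k$ is $\d$-Cauchy, then for every $\nu$ the sequence $\phi_\nu u_k$ is Cauchy with respect to the seminorm $\rho_\nu$, hence converges to some $v_\nu$ in that Banach space. Since $\phi_\nu\equiv1$ on $\mathcal{O}_\nu$, these limits agree on the exhausting balls $\mathcal{O}_\nu$ and patch into a single element $u\in\mathcal{X}$ with $\d(u_k,u)\to0$. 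Thus $\mathcal{X}$ is Polish.

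Next I would invoke the tightness already proved in Proposition~\ref{P:3.2''}: the family $\{\mu_{u^\ve}:\ve\in(0,1)\}$ of laws on $\mathcal{X}$ is tight. Since $\mathcal{X}$ is Polish, Prokhorov's theorem identifies tightness with relative weak compactness, so there is a sequence $\ve_n\downarrow0$ along which the laws $\mu^n:=\mu_{u^{\ve_n}}$ converge weakly to some Borel probability measure $\mu$ on $\mathcal{X}$; writing $u^n:=u^{\ve_n}$, we have $\mu^n\rightharpoonup\mu$. This is precisely the subsequence already extracted in the paragraph preceding the statement.

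Finally I would apply the Skorokhod representation theorem on the Polish space $\mathcal{X}$. Because $\mu^n\rightharpoonup\mu$ with all measures being Borel probability measures on $\mathcal{X}$, the theorem furnishes a probability space $(\tilde\Om,\tilde\F,\tilde\bbP)$ together with $\mathcal{X}$-valued random variables $\tilde u^n$ and $\tilde u$ whose laws under $\tilde\bbP$ are $\mu^n$ and $\mu$, respectively, and for which $\tilde u^n\to\tilde u$ holds $\tilde\bbP$-almost surely in the topology of $\mathcal{X}$. These are exactly assertions (i) and (ii).

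The only step requiring genuine care---the hard part, such as it is---is the verification that $(\mathcal{X},\d)$ is complete, since the classical Skorokhod theorem is stated for Polish spaces, and separability alone would not suffice without passing to a Jakubowski-type generalization. Everything else is a mechanical chaining of Proposition~\ref{P:3.2''}, Prokhorov, and Skorokhod.
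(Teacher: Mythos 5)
Your proposal is correct and follows essentially the same route as the paper, which obtains the proposition by chaining the tightness of Proposition~\ref{P:3.2''} with Prokhorov's and Skorokhod's theorems; the paper itself only records separability of $(\mathcal{X},\d)$ and leaves the rest implicit. Your explicit verification that $\mathcal{X}$ is complete (hence Polish) is a sound and worthwhile addition --- though strictly speaking the Billingsley--Dudley form of Skorokhod's theorem requires only that the limit law have separable support, so completeness is a convenience rather than a necessity here.
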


We then define yet another probability space $\overline{\Om}=\Om\X\tilde \Om$ with the product probability measure $\overline{\bbP}=\bbP\X\tilde{\bbP}$, the $\s$-algebra $\overline{\F}$ as the product  $\s$-algebra generated by $\tilde \F\X \F$,    and, from the Wiener process $W(t)$ in $\Om$, we define the Wiener process $\overline{W}(t)$ in $\overline{\Om}$ trivially by $\overline{W}(t)(\om,\tilde \om)=W(t)(\om)$, for $(\om,\tilde \om)\in\overline{\Om}=\Om\X\tilde\Om$; clearly, $\overline{W}$ has the same law as $W$.  Defining $\bar u^n:\overline{\Om}\to\mathcal{X}$ by $\bar u^n(\om,\tilde \om)=\tilde u^n(\tilde \om)$, we have that $\mu_{\bar u^n}=\mu_{\tilde u^n}=\mu_{u^n}$. Also, $\bar u^n$ converges a.s.\ in $\overline{\Om}$ to the random variable $\bar u:\overline{\Om}\to \mathcal{X}$ defined by $\bar u(\om,\tilde \om)=\tilde u(\tilde\om)$. We define a  filtration $\overline{\F}_t$ for $(\overline{\Om}, \overline{\F},\overline{\bbP})$ in the following way ({\em cf.} \cite{Ha}).
For each $t\in[0,T]$, the restriction map $\rho_t: C([0,T]; W_\loc^{-1,r} (\R^N)\X C([0,T];\mathfrak{U}_0)\to C([0,t]; W_\loc^{-1,r} (\R^N))\X C([0,t];\mathfrak{U}_0)$, $(v,W)\mapsto (v,W)|[0,t]$, is a continuous map. Here, $\mathfrak{U}_0$ is the Hilbert space where the cylindrical Wiener process $W(t)$ is well defined. So, we define as $\overline{\F}_t=\s(\rho_t\bar u,\rho_t\overline{W})$, the $\s$-algebra of subsets of $\overline{\Om}$ generated 
by the function $(\rho_t\bar u, \rho_t\overline{W}):\overline{\Om}\to C([0,t];W_\loc^{-1,r}(\R^N))\X C([0,t];\mathfrak{U}_0)$, and we denote also by $(\overline{\F}_t)_{t\ge0}$ the corresponding augmented filtration, i.e., the smallest complete right-continuous filtration containing $(\overline{\F_t})_{t\ge0}$.

\begin{definition}\label{D:3.1} We say that $\tilde u$ is an entropy martingale solution of \eqref{e2.1}-\eqref{e2.2} if, for some probability space equipped with a filtration $(\overline{\Om}, \overline{\F}, (\overline{\F}_t), \overline{\bbP})$ and some cylindrical Wiener process $\overline{W}(t)=\sum_{i=1}^\infty \bar \b_k e_k$,
with respect to the filtration $(\overline{\F}_t)$,
with $\{e_k\}_{k\ge1}$  a complete orthonormal system in a Hilbert space $H$  and $\bar \b_k$, $k\in\N$, independent Brownian motions in $(\overline{\Om}, \overline{\F}, (\overline{\F}_t), \overline{\bbP})$,  
if $\bar w=\bar u-\bar J$, with $\bar J:=\sum_{k=1}^\infty g_k\bar\b_k$, satisfies Definition~\ref{D:2.1} with $u$, $J$, $W$, replaced by $\bar u$, 
$\bar J$,  $\overline{W}$.
 \end{definition}

\begin{proposition}\label{P:3.new}  The limit $\bar u$ with $(\overline{\Om}, \overline{\F}, (\overline{\F}_t)_{t\ge0}, \overline{\bbP},\overline{W})$ is  an entropy martingale solution of \eqref{e2.1}-\eqref{e2.2}  in the sense of Definition~\ref{D:3.1}.
\end{proposition}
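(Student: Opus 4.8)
The plan is to pass to the limit $\ve_n\to0$ in the entropy inequality satisfied by the viscous approximations, using the almost sure convergence provided by Proposition~\ref{P:3.2'''}. First I would record that each approximation satisfies a viscous version of \eqref{e2.3}. For a fixed realization the function $w^\ve=u^\ve-J_\ve$ solves the deterministic parabolic problem \eqref{e3.3}, and by \eqref{e3.5}--\eqref{e3.7} it is smooth and bounded; thus a standard Kruzhkov computation (multiplying by $\eta'(w^\ve-\a)$ for a smooth convex approximation $\eta$ of $|\cdot|$ and integrating against a nonnegative $\varphi\in C_0^\infty$) yields \eqref{e2.3} with $w,J$ replaced by $w^\ve,J_\ve$, up to two kinds of $\ve$-terms coming from the Laplacian. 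The convex dissipation $-\ve\iint \eta''(w^\ve-\a)|\nabla w^\ve|^2\varphi$ has the favourable sign and may be discarded, while the remaining contributions $\ve\iint\eta'(w^\ve-\a)\Delta\varphi$ and the forcing $\ve\iint\eta'(w^\ve-\a)\Delta J_\ve\,\varphi$ are $O(\ve)$ uniformly, by \eqref{e3.8} and the uniform bounds \eqref{e1.3-0} on the $g_k$. Hence, pathwise, each $w^\ve$ satisfies \eqref{e2.3} modulo an error tending to $0$.

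Since by Proposition~\ref{P:3.2'''}(i) the law of $\tilde u^n$ coincides with that of $u^{\ve_n}$, and since the entropy functional is a measurable map on $\mathcal{X}$ (jointly with the driving path), the same approximate inequality transfers to $\bar u^n$ on the enlarged space $(\overline{\Om},\overline{\F},\overline{\bbP})$ with driving noise $\overline{W}$ and components $\bar\b_k$. Before letting $n\to\infty$ I must justify Definition~\ref{D:3.1}: namely that $\overline{W}$ is a cylindrical Wiener process relative to the augmented filtration $(\overline{\F}_t)$ and that $\bar u$ is adapted. I would obtain this from the L\'evy martingale characterization, checking that $\overline{W}$ and $\bar\b_j\bar\b_k-\d_{jk}t$ are $(\overline{\F}_t)$-martingales; this passes to the limit because the finite dimensional laws of $(\bar u^n,\overline{W})$ agree with those of $(u^{\ve_n},W)$, and adaptedness of $\bar u$ is built into the definition of $\overline{\F}_t$ as the filtration generated by $(\rho_t\bar u,\rho_t\overline{W})$.

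The limit itself now uses Proposition~\ref{P:3.2'''}(ii): $\bar u^n\to\bar u$ $\overline{\bbP}$-a.s.\ in $\mathcal{X}$, hence in $L^1(0,T;L_\loc^r(\R^N))$ and, along a subsequence, a.e.\ on $[0,T]\X\R^N$. Because $g_{k\ve_n}\to g_k$ and $\nabla g_{k\ve_n}\to\nabla g_k$ uniformly with the summability \eqref{e1.3}, we get $\bar J^n\to\bar J$ and $\nabla\bar J^n\to\nabla\bar J$, whence $\bar w^n=\bar u^n-\bar J^n\to\bar w=\bar u-\bar J$ a.e.\ and in $L_\loc^1$. The linear terms pass to the limit immediately; for the flux terms, the Lipschitz continuity of $\bff$, together with the a.e.\ convergence and the uniform $L^2$ bound \eqref{e3.8} (which furnishes equi-integrability on compact sets), allows passage to the limit by Vitali's theorem, and similarly for the term $\sgn(\bar w^n-\a)\bff'(\a+\bar J^n)\cdot\nabla\bar J^n\varphi$. \textbf{The step I expect to be the main obstacle} is the correct identification of the stochastic structure of the limit: since \eqref{e2.3} is written through $\bar w$ and $\bar J$ rather than an explicit stochastic integral, one must ensure that the limiting $\bar J$ is genuinely generated by a Wiener process adapted to $(\overline{\F}_t)$ and correctly coupled with $\bar w$; this is exactly what the martingale characterization above, combined with the convergence-of-stochastic-integrals argument of \cite{DHV}, provides, and it is where the strong compactness gained from the non-degeneracy condition \eqref{e1.GH} (ensuring an actual $L^1_\loc$ limit rather than a merely measure-valued one) is essential.

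Finally, the structural requirements (1)--(2) of Definition~\ref{D:2.1} for $\bar u$ follow from the uniform estimates: the bounds \eqref{e3.120} and \eqref{e3.8} are preserved under weak-$*$ limits by lower semicontinuity, giving $\bar u\in L^\infty(0,T;L_\loc^1\cap\B^1(\R^N))$, while the uniform $C^\l([0,T];W_\loc^{-1,r})$ control underlying membership in $\mathcal{X}$ yields the required $L_\loc^1\cap\B^1$-weak continuity of $t\mapsto\bar u(t)$ on $[0,T]$. Together with the limiting entropy inequality this shows that $\bar w=\bar u-\bar J$ verifies Definition~\ref{D:2.1} with $u,J,W$ replaced by $\bar u,\bar J,\overline{W}$, i.e.\ that $\bar u$ is an entropy martingale solution in the sense of Definition~\ref{D:3.1}.
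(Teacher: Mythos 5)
Your proposal is correct and follows essentially the same route as the paper: pass to the limit in the viscous entropy inequality (whose $\ve_n$-error terms are controlled by \eqref{e3.8} and \eqref{e1.3-0}), transfer the inequality to the Skorokhod representatives via equality of laws, and verify items (1)--(2) of Definition~\ref{D:2.1} from the uniform bounds \eqref{e3.120}, \eqref{e3.8} and the weak formulation. The only (minor) difference is procedural: the paper tests the entropy functional against an arbitrary continuous $\g(\bar u,\overline{W})\in[0,1]$ and passes to the limit inside the expectation, deducing the a.s.\ inequality at the end from the arbitrariness of $\g$, whereas you transfer the approximate inequality pathwise to each $\bar u^n$ and then take a pathwise limit; both are standard and equivalent here.
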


 
\begin{proof} We observe that $\bar u^n$ and  the limit $\bar u$ also satisfy \eqref{e3.120} and \eqref{e3.8}. Hence, the limit $\bar u$ is a.s.\ in $L^\infty((0,T);\B^1(\R^N)\cap L_\loc^1(\R^N))$.  We then have that $\bar u$ is an $L_\loc^1\cap \B^1$-valued adapted process. 

We now prove item (3) of Definition~\ref{D:2.1}. For any continuous $\g:\mathcal{X}\X\mathcal{W}\to [0,1]$, with $\mathcal{W}=C([0.1];\mathfrak{U}_0)$, and any $0\le \varphi\in C_c^\infty([0,\infty)\X\R^N)$,
\begin{multline*}
\bbE\g(\bar u, \overline{W})\int_0^T\int_{\R^N} \{(|\bar w-\a|-|u_0-\a|)\varphi_t\\
+\sgn(\bar w-\a)(\bff(\bar w+\bar J)-\bff(\a+\bar J))\cdot\nabla\varphi
 -\sgn(\bar w-\a)\bff'(\a+\bar J)\nabla \bar J\varphi\}\,dx\,dt\\
=\lim_{n\to\infty} \bbE\g({\bar u}^n, \overline{W})\int_0^T\int_{\R^N} \{(|{\bar w}^n-\a|-|u_0-\a|)\varphi_t\\
+\sgn({\bar w}^n-\a)(\bff({\bar w}^n+\bar J_n)-\bff(\a+\bar J_n))\cdot\nabla\varphi
 -\sgn({\bar w}^n-\a)\bff'(\a+\bar J_n)\nabla \bar J_n\varphi\}\,dx\,dt\\
 =
\lim_{n\to\infty} \bbE\g( u^n, W)\int_0^T\int_{\R^N} \{(| w^n-\a|-|u_0-\a|)\varphi_t\\
+\sgn(w^n-\a)(\bff(w^n+ J_n)-\bff(\a+ J_n))\cdot\nabla\varphi 
-\sgn(w^n-\a)\bff'(\a+ J_n)\nabla J_n\varphi\}\,dx\,dt\\
\ge -\lim_{n\to\infty}\ve_n\bbE\g( u^n, W)\int_0^T\int_{\R^N}\{|w^n-\a|\Delta\varphi+\sgn(w^n-\a)\Delta J_n\varphi \}\,dx\,dt\\
\ge -C(\|\Delta\varphi\|_\infty,\|\varphi\|_\infty, \diam(\supp\varphi), D_0,\a)
 \\
\X\lim_{n\to\infty} \ve_n\left(\bbE\sup_{0\le t\le T}\|u_n\|_{L_{\rho_*}^2}^2+\sum_{k=1}^\infty \a_k\bbE|\b_k(T)|+1\right)
\to0\quad \text{as $n\to\infty$}.
 \end{multline*}
Since $\g:\mathcal{X}\X\mathcal{W}\to [0,1]$ is arbitrary, we conclude that, a.s., 
\begin{multline}\label{e3.121}
\int_0^T\int_{\R^N} \{(|\bar w-\a|-|u_0-\a|)\varphi_t+\sgn(\bar w-\a)(\bff(\bar w+\bar J)-\bff(\a+\bar J))\cdot\nabla\varphi\\
 -\sgn(\bar w-\a)\bff'(\a+\bar J)\nabla \bar J\varphi\}\,dx\,dt\ge0,
\end{multline} 
for all $0\le \varphi\in C_c^\infty([0,\infty)\X\R^N)$, which verifies item (3) of Definition~\ref{D:2.1}.

{}From \eqref{e3.121}, with $\a=0$, let us take a test function as in the proof of Proposition~\ref{P:2.1},  
$\varphi(t,x)=
R^{-N}\chi_\nu(t)g(x/R)$, where $\chi_\nu(t)$ is a smooth approximation of ${\bf1}_{[0,t_0)}$, and 
$t_0$ is a Lebesgue point of
$$
\frac1{R^N}\int_{\R^N}|\bar w(t,x)| g(x/R)\,dx.
$$ 
 After making $\nu\to0$ we get for a.a.\ $\bar \om\in\overline{\Om}$ 
 \begin{multline*} 
 \frac1{R^N}\int_{\R^N} (|\bar w(t_0)|-|u_0|)\,dx+\frac1{R^{N+1}}\int_0^{t_0}\int_{\R^N} \sgn(w)(\bff(\bar    w(t)+\bar J(t))-\bff(\bar J(t)))\cdot \nabla g(x/R)\,dx\\
 \le \frac1{R^N}\int_0^{t_0}\int_{\R^N} |\bff'(\bar J)\nabla \bar J| g(x/R)\,dx\,dt,
 \end{multline*}
 which implies, by making $R\to\infty$, 
 \begin{equation}\label{e3.new}
 \Medint_{\R^N}|\bar w(t_0,x)|\,dx\le \Medint_{\R^N}|u_0(x)|\,dx
 +\int_0^{t_0}\Medint_{\R^N}|\bff'(J)||\nabla J|\,dx\,dt,
 \end{equation}
 for a.e.\ $t_0\in[0,T]$.
 In particular, \eqref{e3.new} proves that $\bar w\in L^\infty([0,T]; \B^1(\R^N))$, which we had already obtained from \eqref{e3.120}, but \eqref{e3.new} gives an explicit bound.  Similarly,
 from \eqref{e3.121}, with $\a=0$, using  $\varphi(t,x)=\rho_*(x)\chi_\nu(t)$, as test function and Gr\"onwall's inequality, we obtain, for a.e.\ $t_0\in[0,T]$, 
 \begin{equation}\label{e3.new2}
 \int_{\R^N}|\bar w(t_0,x)|\rho_*(x)\,dx\le C(T)\int_{\R^N}|u_0(x)|\rho_*(x)\,dx,
 \end{equation}
 which proves that  $\bar w\in L^\infty([0,T]; L_\loc^1(\R^N))$ and, so, item (2) of Definition~\ref{D:2.1} is verified.

 In \eqref{e3.121}, we choose $\varphi(t,x)=\phi(t,x)\frac1{R^N}g(x/R)$, with $g$ as above, $0\le \phi\in C_c^\infty([0,\infty); \AP^\infty(\R^N))$, where $\AP^k(\R^N)$, $k\in\N\cup\{0,\infty\}$, is the space of functions in $C_b^k(\R^N)$ such that all derivatives up to the order $k$ belong to $\AP(\R^N)$. Then, making $R\to\infty$, we get
 \begin{multline}\label{e3.122}
\int_0^T\int_{\bbG_N} \{(|\bar w-\a|-|u_0-\a|)\phi_t+\sgn(\bar w-\a)(\bff(\bar w+\bar J)-\bff(\a+\bar J))\cdot\nabla\phi\\
 -\sgn(\bar w-\a)\bff'(\a+\bar J)\nabla \bar J\phi\}\,d\mm(y)\,dt\ge0,
\end{multline}  
 
 Now, since the $(N+1)$-dimensional Lebesgue measure of the subset of $[0,T]\X\R^N$  where $|\bar w|> |\a|$ goes to zero when $|\a|\to\infty$, because $\bar w\in L^\infty([0,T]; L_{\rho_*}^2(\R^N))$, we get from \eqref{e3.121}, letting $|\a|\to\infty$,     
\begin{equation}\label{e3.123}
\int_0^T\int_{\R^N} \{(\bar w-u_0)\varphi_t+\bff(\bar w+\bar J)\cdot\nabla\varphi\}\,dx\,dt=0,
\end{equation} 
for all $\varphi\in C_c^\infty([0,\infty)\X\R^N)$.

Similarly, from \eqref{e3.122} we get 
 \begin{equation}\label{e3.124}
\int_0^T\int_{\bbG_N} \{(\bar w-u_0)\phi_t+\bff(\bar w+\bar J)\cdot\nabla\phi\}\,d\mm(y)\,dt=0,
\end{equation} 
for all  $\phi\in C_c^\infty([0,\infty); \AP^\infty(\R^N))$. 

{}From \eqref{e3.123} and \eqref{e3.124}, by choosing suitable test functions, we prove in a standard way
that $t\mapsto \bar w(t)$ is weakly continuous in $L_\loc^1(\R^N)\cap \B^1(\R^N)$, recalling that 
$\B^1(\R^N)$ can be identified with $L^1(\bbG_N)$, which then shows that $\bar u$ also satisfies item~(1) of Definition~\ref{D:2.1}.  Hence, $\bar u$ is indeed a martingale entropy solution of \eqref{e2.1}-\eqref{e2.2}.
\end{proof}

\medskip
Now, since we have the uniqueness of such solution by Theorem~\ref{T:2.2}, we can apply Gy\"ongy-Krylov's  criterion  for convergence in probability to conclude that the whole original sequence $u^\ve$ converges almost surely 
in $L_\loc^1((0,T)\X\R^N)$ to an entropy solution of \eqref{e2.1}-\eqref{e2.2} in the sense of Definition~\ref{D:2.1}. In this way, we have proved the existence of a  $\BAP$-entropy solution to \eqref{e2.1}-\eqref{e2.2} in the sense of Definition~\ref{D:2.1}, for initial data in $\AP(\R^N)$, which can easily be generalized to any initial data in $L^\infty\cap \B^1(\R^N)$ due to the stability results of Proposition~\ref{P:2.1} and Theorem~\ref{T:2.2}.   

\section{$L^1(\bbG_N)$-entropy solutions and $L^1(\bbG_N)$-semigroup solutions}\label{S:4}

In order to study invariant measures on $\B^1(\R^N)$ we need to formulate the definition of solution to \eqref{e1.1}-\eqref{e1.2} in $\bbG_N$ the Bohr compact  associated with $\AP(\R^N)$ such that $\AP(\R^N)\sim C(\bbG_N)$. The latter implies the isometric isomorphism 
$\B^1(\R^N)\sim L^1(\bbG_N)$. We denote by $C^k(\bbG_N)$ the space corresponding to $\AP^k(\R^N)$
by the same isomorphism, $k\in\N\cup\{0,\infty\}$.
 
 \begin{definition}\label{D:4.1} Let $T>0$ be given. A $L^1(\bbG_N)$-valued stochastic process adapted
to $\{\F_t\}$ is said to be a $L^1(\bbG_N)$-entropy solution of \eqref{e1.1}-\eqref{e1.2} if, for almost all $\om\in\Om$,
\begin{enumerate}
 \item $u\in L^1(\bbG_N)$-weakly continuous on $[0,T]$,
 \item $u\in L^\infty([0,T]; L^1(\bbG_N))$,
 \item for all nonnegative $\tilde\varphi\in C_0^\infty ([0,T)\X\bbG_N)$ and $\a\in\R$
 \begin{multline}\label{e4.3}
 \int_0^T\int_{\bbG_N} \{(|w-\a|-|u_0-\a|)\tilde\varphi_t+\sgn(w-\a)(\bff(w+J)-\bff(\a+J))\cdot\nabla\tilde\varphi\\
 -\sgn(w-\a)\bff'(\a+J)\nabla J\tilde\varphi\}\,d\mm(z)\,dt\ge0,
 \end{multline}
 \end{enumerate}
 \end{definition}

\begin{theorem}\label{T:3new} Assume \eqref{e1.GH} holds. Given, $u_0\in L^1(\bbG_N)$, there exists a $L^1(\bbG_N)$-entropy solution of  \eqref{e1.1}-\eqref{e1.2} in the sense of Definition~\ref{D:4.1} which is the limit in   $L^1(\Om;  L^\infty((0,T) ;L^1(\bbG_N)))$  of
$\BAP$-entropy solutions of \eqref{e2.1}-\eqref{e2.2} with initial data in $\AP(\R^N)$. 
\end{theorem}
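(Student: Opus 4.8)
The plan is to transfer the $\BAP$-entropy solution theory on $\R^N$, established in Sections~\ref{S:2}--\ref{S:3}, to the compact group $\bbG_N$ via the isometric isomorphism $\B^1(\R^N)\sim L^1(\bbG_N)$, and then to pass to the limit in the initial data. First I would observe that the construction leading to Proposition~\ref{P:3.new} produces a $\BAP$-entropy solution $u$ whose associated $\bar w=\bar u-\bar J$ satisfies not only the $\R^N$-form \eqref{e3.121} of the entropy inequality, but also the group-form \eqref{e3.122}, obtained there by choosing $\varphi(t,x)=\phi(t,x)R^{-N}g(x/R)$ with $\phi\in C_c^\infty([0,\infty);\AP^\infty(\R^N))$ and letting $R\to\infty$. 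Since $\phi$ ranges over test functions that extend continuously to $\bbG_N$, and since $d\mm$ is exactly the measure induced by the mean value $\Me$, inequality \eqref{e3.122} is precisely item~(3) of Definition~\ref{D:4.1}; likewise \eqref{e3.new}, \eqref{e3.new2} and the weak continuity extracted from \eqref{e3.124} give items~(1) and~(2). Thus, \emph{for initial data in $\AP(\R^N)$}, the $\BAP$-entropy solution is simultaneously an $L^1(\bbG_N)$-entropy solution in the sense of Definition~\ref{D:4.1}.

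The second step is to upgrade the contraction property of Proposition~\ref{P:2.1} to the $\bbG_N$ setting. Proposition~\ref{P:2.1} gives, a.s.\ and for a.e.\ $t$,
\begin{equation*}
\Medint_{\R^N}|u^1(t,x)-u^2(t,x)|\,dx\le \Medint_{\R^N}|u_0^1(x)-u_0^2(x)|\,dx,
\end{equation*}
and under the identification $N_1(\cdot)=\|\cdot\|_{L^1(\bbG_N)}$ this reads
\begin{equation*}
\|u^1(t)-u^2(t)\|_{L^1(\bbG_N)}\le \|u_0^1-u_0^2\|_{L^1(\bbG_N)},\quad a.s.
\end{equation*}
This is the crucial ingredient: it shows the solution map $u_0\mapsto u(\cdot)$, defined so far on $\AP(\R^N)$-data, is a.s.\ Lipschitz (indeed $1$-Lipschitz) from $L^1(\bbG_N)$ into $L^\infty((0,T);L^1(\bbG_N))$, and hence extends uniquely and continuously to all of $L^1(\bbG_N)=\overline{\AP(\R^N)}$. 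Concretely, given $u_0\in L^1(\bbG_N)$, I would pick trigonometric polynomials $u_0^m\to u_0$ in $L^1(\bbG_N)$, let $u^m$ be the corresponding $\BAP$-entropy solutions, and use the contraction to see that $(u^m)$ is a.s.\ Cauchy in $L^\infty((0,T);L^1(\bbG_N))$; taking expectations and invoking the uniform $\B^1$-bound \eqref{e3.120} would promote this to a Cauchy sequence in $L^1(\Om;L^\infty((0,T);L^1(\bbG_N)))$, yielding a limit $u$ in that space.

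The final step is to check that the limit $u$ is itself an $L^1(\bbG_N)$-entropy solution. Adaptedness and the structural bounds \eqref{e3.120}, \eqref{e3.new}, \eqref{e3.new2} survive the limit by lower semicontinuity and the linearity of the bounds in $\|u_0\|$. The main work is passing to the limit in the entropy inequality \eqref{e4.3}: since $\bff$ is Lipschitz, the flux terms $\bff(w^m+J)$ converge in $L^1_{\loc}$ as $w^m\to w$, and the dissipative term $\sgn(w^m-\a)\bff'(\a+J)\nabla J\,\tilde\varphi$ converges by dominated convergence using \eqref{e1.3-0}; the awkward point is the $\sgn(w^m-\a)$ factor, which is only weak-* convergent, but this is handled exactly as in the classical Kruzhkov theory because the $L^1$-convergence of $w^m$ forces a.e.\ convergence along a subsequence, so $\sgn(w^m-\a)\to\sgn(w-\a)$ a.e.\ off the level set $\{w=\a\}$, whose contribution vanishes. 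I expect the main obstacle to be precisely this stability of the entropy inequality under $L^1$-limits of the initial data, together with the bookkeeping needed to guarantee that the limit $u$ arises from a \emph{single} noise realization (i.e.\ that the martingale/Skorokhod construction of Section~\ref{S:3} is compatible across the approximating sequence $u_0^m$); once these are secured, the identification of $u$ as a $L^1(\bbG_N)$-semigroup solution and the contraction \eqref{e1.4AP0} follow by passing to the limit in Proposition~\ref{P:2.1}.
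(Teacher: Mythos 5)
Your proposal is correct and follows essentially the same route as the paper: first showing that $\BAP$-entropy solutions with $\AP(\R^N)$ data satisfy Definition~\ref{D:4.1} via mean-value test functions of the form $R^{-N}g(x/R)\tilde\varphi$, then using the contraction \eqref{e2.4} read as \eqref{e4.10} on $L^1(\bbG_N)$ to extend by density, with the $L^1(\Om)$ bound from \eqref{e3.new} supplying dominated convergence. Your concern about compatibility of noise realizations across the approximating sequence is already resolved by the pathwise uniqueness of Theorem~\ref{T:2.2} and the Gy\"ongy--Krylov step, which place all approximating solutions on the original stochastic basis.
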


\begin{proof} Indeed, existence of a $L^1(\bbG_N)$-entropy solution of \eqref{e1.1}-\eqref{e1.2}  in the sense of Definition~\ref{D:4.1},
when $u_0\in\AP(\R^N)$,  follows from the existence of $\BAP$-entropy solution of \eqref{e2.1}-\eqref{e2.2} in the sense of Definition~\ref{D:2.1}, proved above,  by suitably choosing in \eqref{e2.3} a test function of the form
$$
\varphi(t,x)= R^{-N}g(x/R) \tilde\varphi(t,x),
$$ 
where $g$ is as in the proof of Proposition~\ref{P:2.1}, $\varphi\in C_c^\infty([0,T); \AP^\infty(\R^N))$. In this way we obtain that the $\BAP$-entropy solution satisfies the item (3)
of Definition~\ref{D:4.1}, that is, it verifies  \eqref{e4.3} for all nonnegative $\tilde\varphi\in C_0^\infty ([0,T)\X\bbG_N)$ and $\a\in\R$. 

We next observe that making $\a\to+\infty$ in \eqref{e4.3}, splitting the integral over $\bbG_N$ in two parts, one over $\{w\le \a\}$ and other over $\{w>\a\}$, and, after, also making $\a\to-\infty$ and proceeding similarly, we obtain the following integral equation
\begin{equation}\label{e4.3eq}
 \int_0^T\int_{\bbG_N} \{w\varphi_t+\bff(w+J)\cdot\nabla\varphi\}\,d\mm(z)\,dt
 =\int_{\bbG_N} u_0(z) \varphi(0,z )\,d\mm(z),
 \end{equation}
for all $\varphi\in C_0^\infty ([0,T)\X\bbG_N)$. The fact that \eqref{e4.3eq} holds for all $\varphi\in C_0^\infty ([0,T)\X\bbG_N)$ implies, in turn, in a standard way, the item (1) of Definition~\ref{D:4.1}. 

Finally, we may verify the item~(2) of Definition~\ref{D:4.1} also as a consequence of item (3) of Definition~\ref{D:4.1} by
obtaining an inequality as \eqref{e3.new}, in the same way as latter was obtained from \eqref{e3.121}. This concludes the proof that $\BAP$-entropy solutions are also (or extend to) $L^1(\bbG_N)$-entropy solutions. 

{}Also, given two  $\BAP$-entropy
solutions $u(t,x), v(t,x)$, with initial data $u_0, v_0\in \AP(\R^N)$, we get from  \eqref{e2.4}
\begin{equation}\label{e4.10} 
\int_{\bbG_N} |u(t,z)-v(t,z)|\,d\mm(z)\le \int_{\bbG_N}|u_0(z)-v_0(z)|\,d\mm(z).
\end{equation}
Now, from \eqref{e4.10} we can extend the existence of $L^1(\bbG_N)$-entropy solutions for initial data $u_0\in L^1(\bbG_N)$. Indeed,  if we approximate the initial data $u_0\in L^1(\bbG_N)\sim \B^1(\R^N)$ in $\B^1(\R^N)$ by a sequence $u_{0n}\in \AP(\R^N)$, from \eqref{e4.10} we deduce that the corresponding $\BAP$-entropy solutions $u_n$, in the sense of Definition~\ref{D:2.1}, form a Cauchy sequence in $L^\infty((0,T); L^1(\bbG_N))$, and so, there is $u\in L^\infty((0,T);L^1(\bbG_N))$ such that $u_n\to u$ in $L^\infty((0,T);L^1(\bbG_N))$. This is true a.s.\ in $\Om$ and since by \eqref{e3.new} the norm of the $u_n$'s in $L^\infty((0,T);L^1(\bbG_N))$  is
 bounded by a function in $L^1(\Om)$, we conclude by dominated convergence that 
$$
u_n\to u \quad \text {in $L^1(\Om;L^\infty((0,T); L^1(\bbG_N)))$}.
$$ 
It is then easy to check that the limit $u$ is indeed a $L^1(\bbG_N)$-entropy solution in the sense of Definition~\ref{D:4.1}. Moreover, the contraction property \eqref{e4.10}  extends to any pair of such  $L^1(\bbG_N)$-entropy solutions with initial data in $L^1(\bbG_N)$, obtained as limit in $L^1(\Om; L^\infty([0,T]; L^1(\bbG_N)))$ of  $\BAP$-entropy solutions. In this way we have proved the existence of a $L^1(\bbG_N)$-entropy solution to \eqref{e1.1}-\eqref{e1.2} for any initial data in $L^1(\bbG_N)$. 
\end{proof}

 \begin{definition}\label{D:4.2} Let $T>0$ be given. A $L^1(\bbG_N)$-entropy solution of \eqref{e1.1}-\eqref{e1.2} is said to be a $L^1(\bbG_N)$-semigroup solution if it is the limit in $L^1(\Om;L^\infty((0,T);L^1(\bbG_N)))$ of a sequence of $\BAP$-entropy solutions of \eqref{e2.1}-\eqref{e2.2}, with initial functions converging to $u_0$ in $L^1(\bbG_N)$. 
 \end{definition}
 
 Existence of a $L^1(\bbG_N)$-semigroup solution is guaranteed by Theorem~\ref{T:3new}. The following proposition justifies the introduction of the notion of $L^1(\bbG_N)$-semigroup solution of \eqref{e1.1}-\eqref{e1.2}. The proof is straightforward. 
 
 \begin{proposition}\label{P:4.1} Let $u$ and $v$ be two $L^1(\bbG_N)$-semigroup solutions of \eqref{e1.1}-\eqref{e1.2} with initial functions $u_0,v_0\in L^1(\bbG_N)$. Then \eqref{e4.10} holds.
 \end{proposition}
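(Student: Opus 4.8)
The plan is to reduce the assertion to the contraction property already established for $\BAP$-entropy solutions at the level of $L^1(\bbG_N)$, namely inequality \eqref{e4.10} from the proof of Theorem~\ref{T:3new}, and then pass to the limit. By Definition~\ref{D:4.2}, the semigroup solutions $u$ and $v$ are, by hypothesis, limits in $L^1(\Om; L^\infty((0,T); L^1(\bbG_N)))$ of sequences of $\BAP$-entropy solutions $u_n, v_n$ whose initial data $u_{0n}, v_{0n} \in \AP(\R^N)$ converge to $u_0, v_0$ in $L^1(\bbG_N) \sim \B^1(\R^N)$. The point is simply that \eqref{e4.10} is stable under this limit.

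First I would fix the approximating sequences: choose $u_{0n}, v_{0n} \in \AP(\R^N)$ with $u_{0n} \to u_0$ and $v_{0n} \to v_0$ in $L^1(\bbG_N)$, and let $u_n, v_n$ be the corresponding $\BAP$-entropy solutions supplied by the existence part of the theory (Theorem~\ref{T:2.1}). For each $n$, since $u_{0n}, v_{0n} \in \AP(\R^N)$, the contraction \eqref{e4.10} applies directly, giving, a.s.\ and for a.e.\ $t$,
\begin{equation}\label{e4.10n}
\int_{\bbG_N} |u_n(t,z)-v_n(t,z)|\,d\mm(z)\le \int_{\bbG_N}|u_{0n}(z)-v_{0n}(z)|\,d\mm(z).
\end{equation}
The right-hand side converges to $\int_{\bbG_N}|u_0 - v_0|\,d\mm$ as $n\to\infty$, by the choice of approximating data and the triangle inequality in $L^1(\bbG_N)$.

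For the left-hand side, by the defining convergence $u_n \to u$ and $v_n \to v$ in $L^1(\Om; L^\infty((0,T); L^1(\bbG_N)))$, we may pass to a subsequence along which $u_n(t,\cdot) \to u(t,\cdot)$ and $v_n(t,\cdot) \to v(t,\cdot)$ in $L^1(\bbG_N)$ for a.e.\ $t$, a.s.\ in $\Om$. Then $\|u_n(t)-v_n(t)\|_{L^1(\bbG_N)} \to \|u(t)-v(t)\|_{L^1(\bbG_N)}$ for a.e.\ $t$, a.s.; passing to the limit in \eqref{e4.10n} yields \eqref{e4.10} for $u$ and $v$. Since the inequality holds for a.e.\ $t$ on a full-measure subset of $\Om$, it holds a.s.

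The argument is genuinely straightforward, as the paper remarks, because all the analytic content is already packaged into \eqref{e4.10} and the convergence built into Definition~\ref{D:4.2}. The only mild subtlety I would watch is the order of the almost-everywhere/almost-sure qualifiers when extracting the subsequence: one wants the convergence in $t$ to be compatible, across $\Om$, with the exceptional null set on which \eqref{e4.10n} might fail, so a single diagonal subsequence should be chosen that works a.s.\ and then the inequality recovered by monotonicity of the norm under $L^1$ limits. This poses no real difficulty but is the one place where care is warranted.
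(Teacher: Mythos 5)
Your proposal is correct and follows exactly the route the paper intends: the paper omits the proof as ``straightforward'' precisely because the limiting argument is already carried out at the end of the proof of Theorem~\ref{T:3new}, where \eqref{e4.10} for $\AP(\R^N)$ data (itself a restatement of the mean contraction \eqref{e2.4} via the isomorphism $\B^1(\R^N)\sim L^1(\bbG_N)$) is passed to the limit along the defining approximating sequences of Definition~\ref{D:4.2}. Your handling of the a.e.-$t$/a.s.-$\om$ subsequence extraction is the same minor care the paper implicitly takes, so nothing further is needed.
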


\section{Reduction to the periodic case}\label{S:5}

In this section and the next one we consider solutions of \eqref{e1.1}-\eqref{e1.2} taking values in a separable subspace of $L^1(\bbG_N)$. More specifically, we consider the  closed real algebra generated by 1 and the complex trigonometrical functions $e^{\pm i 2\pi\l_j\cdot x}$, $j=1,\cdots, P$, with $\{\l_j\,:\, j=1,\cdots,P\}$  a finite set of vectors in $\R^N$ linearly independent over $\Z$. This closed algebra is the closed subspace of $\AP(\R^N)$ formed by the functions of the form $g(y(x))$, with $g\in C(\bbT^P)$, where $\bbT^P$ is the $P$-dimensional torus and $y(x):=(\l_1\cdot x, \cdots, \l_P\cdot x)$. Indeed, it is the completion in the $\sup$-norm of the real trigonometric polynomials of the form 
$$
s(y(x))=\sum_{\bar k\in J}a_{\bar k}e^{i2\pi \bar k\cdot y(x)},
$$
where $J\subset\Z^P$ is a finite set. Since we are considering real trigonometric polynomials, this means that $J$ should be symmetric, that is $J=-J$, and $a_{-\bar k}=\bar a_{\bar k}$, where, as usual, $\bar a_{\bar k}$ denotes the complex conjugate of $a_{\bar k}$.  Since the completion in the $\sup$-norm of the trigonometric polynomials 
$$
s(y)=\sum_{\bar k\in J}a_{\bar k}e^{i2\pi \bar k\cdot y},
$$
is exactly $C(\bbT^P)$, the assertion follows.

We henceforth denote by $\AP_*(\R^N)$ this subspace of $\AP(\R^N)$ and we will assume that  the noise functions $g_k$, $k\in\N$, belong to $\AP_*(\R^N)$.   

By a well known extension of the Stone-Weierstrass theorem (see, e.g,  \cite{DS}, p.274--276, Theorem~18 and Corollary~19) we have that $\AP_*(\R^N)$ is isometrically isomorphic with $C(\bbG_{*N})$, where $\bbG_{*N}$ is the  compactification  of $\R^N$ associated with
 $\AP_*(\R^N)$, which is
a topological group whose topology is generated by $\{e^{i2\pi k \l_j\cdot x}\,:\, j=1,2,\cdots,P, k\in\Z\}$, under which it is a compact topological space. 
We denote by $\B_*^1(\R^N)$ the completion of $\AP_*(\R^N)$ with respect to the semi-norm $N_1$ defined in \eqref{e2.N1}.  Therefore, we have that $\B_*^1(\R^N)$ is isometrically isomorphic with 
$L^1(\bbG_{*N})$. From the isometric embeddings $\AP_*(\R^N)\hookrightarrow \AP(\R^N)$ and
$\B_*^1(\R^N)\hookrightarrow \B^1(\R^N)$, there follow the isometric embeddings $C(\bbG_{*N})\hookrightarrow C(\bbG_N)$ and $L^1(\bbG_{*N})\hookrightarrow L^1(\bbG_N)$.

 For simplicity, let us first consider the situation where we have as the initial data $u_0$ in \eqref{e2.1}-\eqref{e2.2} a trigonometrical polynomial. So, for some finite symmetric set $J\subset\Z^P$ as above, with $a_{-\bar k}=\bar a_{\bar k}$,
   $u_0$ can be written as
\begin{equation}\label{e4.0}
u_0(x)=\sum_{\bar k\in J} a_{\bar k}e^{2\pi i\bar k\cdot y(x)}.
\end{equation}
Therefore, $u_0(x)=v_0(y(x))$ where
\begin{equation}\label{e4.0'}
v_0(y)=\sum_{\bar k\in J} a_{\bar k}e^{2\pi i \bar k\cdot y}
\end{equation}
also,  $g_k(x)=h_k(y(x))$, with $h_k\in C(\bbT^P)$, 
and, as defined above, $y(x)=(y_1(x),\cdots,y_P(x))$, with
\begin{equation}\label{e4.0''}
y_j(x)=\l_j\cdot x=\sum_{l=1}^n\l_{jl}x_l,\ \l_j=(\l_{j1},\cdots,\l_{jN}).
\end{equation}
Consider the equation
\begin{equation}\label{e4.1}
v_t+\div_y \tilde \bff(v)=\tilde \Phi\,d\tilde W,\quad v=v(t,y),\ \tilde \Phi\,d\tilde W=\sum_{k=1}^\infty h_k(y)\,d\b_k,
\end{equation}
with $\tilde \bff=(\tilde f_1,\cdots, \tilde f_P)$
$$
\tilde f_j(v)=\l_j\cdot \bff(v)=\sum_{l=1}^N\l_{jl} f_l(v),\quad j=1,\cdots,P.
$$
The initial value problem for \eqref{e4.1} is formed by prescribing the initial condition in $\bbT^P$:
\begin{equation}\label{e4.1'}
v(0,y)=v_0(y).
\end{equation}

By the results in \cite{DV,DV'} there is a unique kinetic solution 
$$
v(t,y)\in L^1(\Om; L^\infty((0,T); L^p(\bbT^P))),
$$ 
for all $p\ge1$, since $\bff$ is Lipschitz. Let $\tilde w=v-\tilde J$. 

\begin{definition}\label{D:5.1} Let $T>0$ be given and assume $v_0\in L^1(\bbT^P)$. A $L^1(\bbT^P)$-valued stochastic process adapted
to $\{\F_t\}$ is said to be a periodic entropy solution  of \eqref{e4.1}-\eqref{e4.1'} if, for almost all $\om\in\Om$,
\begin{enumerate}
 \item[(1P)] $u\in L^1(\bbT^P)$-weakly continuous on $[0,T]$,
 \item[(2P)] $u\in L^\infty([0,T]; L^1(\bbT^P))$,
 \item[(3P)] for all nonnegative $\tilde\varphi\in C_0^\infty ((-T,T)\X\bbT^P)$ and $\a\in\R$
 \begin{multline}\label{e4.3'}
 \int_0^T\int_{\bbT^P} \{(|\tilde w-\a|-|v_0-\a|)\tilde\varphi_t+\sgn(\tilde w-\a)(\tilde\bff(\tilde w+\tilde J)-\tilde\bff(\a+\tilde J))\cdot\nabla\tilde\varphi\\
 -\sgn(\tilde w-\a)\bff'(\a+\tilde J)\nabla\tilde  J\tilde\varphi\}\,dy\,dt\ge0,
 \end{multline}
 \end{enumerate}
 \end{definition}

The entropy solution of \eqref{e4.1}-\eqref{e4.1'} also satisfies the contraction property \eqref{e1.4P} and  it is  therefore unique. Also, it can be obtained as above by the vanishing viscosity method, in the same way, actually simpler, as it was done in the previous section for proving the existence of the 
$\BAP$-entropy solution. Therefore, in particular, it must coincide with the kinetic solution satisfying the definition in \cite{DV2}. 

We next establish a result which is the analogue of theorem~2.1 of \cite{Pv}, where the method of reduction to the periodic case was introduced.

\begin{theorem}\label{T:4.1} Let $v:\Om\X(0,T)\X\bbT^P\to\R$, be a periodic entropy solution of \eqref{e4.1}-\eqref{e4.1'}, where $v_0(y)$ is a trigonometric polynomial as in \eqref{e4.0'}. Let $y(x)$ be as in \eqref{e4.0''}. Then, there exists a set $Z\subset\R^P$ of total measure, that is,  $\R^P\setminus Z$ has $P$-dimensional Lebesgue measure zero, such that, for all  $z\in Z$, the function $u(t,x)=v(t,z+y(x))$ is an entropy $\BAP$-solution of an initial value problem as \eqref{e2.1}-\eqref{e2.2} with initial function $u_0(x)=v_0(z+y(x))$ and noise functions $h_k(z+y(x))$. Moreover, $Z$ does not depend on $\om\in\Om$ and can be taken as
the same for all trigonometric polynomials $v_0(y)$ in a countable family $\mathcal{T}$ dense in $L^1(\bbT^P)$. 
\end{theorem}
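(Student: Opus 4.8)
The plan is to verify, for almost every shift $z$ (periodicity reduces the argument to $z\in\bbT^P$, whose full-measure set lifts to the $\Z^P$-periodic set $Z\subset\R^P$ of the statement), each of the three defining properties in Definition~\ref{D:2.1} for the candidate $u_z(t,x):=v(t,z+y(x))$, the crux being the entropy inequality \eqref{e2.3}. Writing $J_z(t,x)=\tilde J(t,z+y(x))$ and $w_z=u_z-J_z=\tilde w(t,z+y(x))$, the underlying algebraic fact is the chain rule $\nabla_x\bigl[F(z+y(x))\bigr]=\sum_{j=1}^P\l_j\,(\partial_{y_j}F)(z+y(x))$, under which $\sum_l\partial_{x_l}f_l$ reproduces $\div_y\tilde\bff$ precisely because $\tilde f_j=\l_j\cdot\bff$. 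This compatibility is what will match the $\R^N$ entropy integrand to the toral one.

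First I would fix a nonnegative $\varphi\in C_0^\infty((-T,T)\X\R^N)$ and $\a\in\R$ and let $D_\a(z)$ denote the left-hand side of \eqref{e2.3} written for $u_z$. Since $z\mapsto D_\a(z)$ is measurable, to obtain $D_\a(z)\ge0$ for a.e.\ $z$ it suffices to prove $\int_{\bbT^P}\eta(z)\,D_\a(z)\,dz\ge0$ for every nonnegative $\eta\in C^\infty(\bbT^P)$. By Fubini and, for each fixed $x$, the measure-preserving change of variables $z\mapsto y'=z+y(x)$ on $\bbT^P$, this weighted average becomes an integral over $(t,y',x)$ in which $\tilde w,\tilde J,v_0$ are evaluated at $y'$, the weight is $\eta(y'-y(x))$, and the derivatives still fall on $\varphi(t,x)$.

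The \emph{decisive} step is to read this expression off the toral entropy inequality \eqref{e4.3'}. For each fixed $x$ I apply \eqref{e4.3'} with the nonnegative test function $\tilde\varphi(t,y')=\eta(y'-y(x))\,\varphi(t,x)\in C_0^\infty((-T,T)\X\bbT^P)$. The $\partial_t$-term and the $\nabla_y\tilde J$-term match the averaged integrand immediately; the flux term of \eqref{e4.3'} carries $\nabla_{y'}\bigl[\eta(y'-y(x))\bigr]$, while the averaged integrand carries $\nabla_x\varphi$. Integrating by parts in $x$ (no boundary terms, as $\varphi$ has compact support and the factor $\sgn(\tilde w-\a)(\tilde\bff(\tilde w+\tilde J)-\tilde\bff(\a+\tilde J))$ depends only on $y'$) turns $\partial_{x_l}\varphi$ into $-\partial_{x_l}[\eta(y'-y(x))]=\sum_j\l_{jl}(\partial_{y_j}\eta)(y'-y(x))$, and the identity $\sum_l\l_{jl}f_l=\tilde f_j$ makes the two flux terms coincide. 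Integrating the resulting pointwise-in-$x$ inequality over $x\in\R^N$ gives $\int_{\bbT^P}\eta\,D_\a\,dz\ge0$, hence $D_\a(z)\ge0$ for a.e.\ $z$.

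It remains to make the exceptional null set independent of $\om$ and uniform over $\mathcal{T}$. The preceding holds pathwise for a.s.\ $\om$; applying Tonelli to the jointly measurable set $\{(\om,z):D_\a(z;\om)<0\}\subset\Om\X\bbT^P$ shows it is null, so for a.e.\ $z$ the inequality \eqref{e2.3} holds for a.s.\ $\om$. Intersecting the resulting full-measure sets of $z$ over a countable dense family of pairs $(\varphi,\a)$ and over the countable family $\mathcal{T}$ yields a single $Z$, independent of $\om$, on which \eqref{e2.3} holds for all admissible test functions by density and continuity in $(\varphi,\a)$. Properties (1) and (2) of Definition~\ref{D:2.1} transfer on $Z$ by Weyl equidistribution: for a.e.\ $z$ one has $N_1(u_z(t))=\Medint_{\R^N}|v(t,z+y(x))|\,dx=\|v(t)\|_{L^1(\bbT^P)}$, using the $\Z$-linear independence of $\{\l_j\}$, which gives $u_z\in L^\infty([0,T];\B^1(\R^N))$ and, together with the weak $L^1(\bbT^P)$-continuity of $v$, the weak $L^1_\loc\cap\B^1$-continuity of $u_z$; the initial datum is $u_z(0,\cdot)=v_0(z+y(\cdot))$ by construction. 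I expect the main obstacle to be the averaging-plus-integration-by-parts identification in the third paragraph, where the linear structure $\tilde f_j=\l_j\cdot\bff$ must be invoked exactly to trade $x$-derivatives of the arbitrary $\R^N$ test function for $y$-derivatives of the toral weight $\eta$.
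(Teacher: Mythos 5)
Your core argument is correct, and it amounts to a self-contained reconstruction of the proof of Theorem~2.1 of \cite{Pv}, to which the paper simply defers for the main part of the statement. The decisive identity you exploit checks out: testing \eqref{e4.3'} with $\tilde\varphi(t,y')=\eta(y'-y(x))\varphi(t,x)$ for fixed $x$, integrating over $x\in\R^N$, and integrating by parts in $x$ (legitimate since the flux factor depends only on $(t,y')$ and $\varphi$ is compactly supported) reproduces exactly $\int_{\bbT^P}\eta(z)D_\a(z)\,dz$, thanks to $\po_{x_l}[\eta(y'-y(x))]=-\sum_j\l_{jl}(\po_{y_j}\eta)(y'-y(x))$ and $\tilde f_j=\l_j\cdot\bff$; the $\nabla J$ term matches by the same chain rule. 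Where you differ from the paper in implementation: (i) you obtain the a.e.-$z$ inequality by duality against arbitrary smooth nonnegative weights $\eta$, whereas the paper (following Panov) averages over shrinking balls and defines $Z$ through the Lebesgue differentiation theorem, i.e., as a set of Lebesgue points of the entropy functionals; (ii) for the $\om$-independence of $Z$ you use Tonelli on $\Om\X\bbT^P$, whereas the paper integrates the entropy expression against indicators $\gamma_\ell={\bf 1}_{A_\ell}$ of a countable basis of $\F$ and takes Lebesgue points of the resulting deterministic functions $I_\ell(v_0)$ of $z$. Both routes are valid and of comparable strength; your duality formulation has the advantage of yielding an exact identity rather than an asymptotic one, while the paper's basis-of-$\F$ device and your Tonelli argument accomplish the same uniformization in $\om$.

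Two loose ends, neither fatal and both left implicit in the paper's own (largely cited) proof. First, the passage from a countable dense set of $\a$ to all $\a\in\R$ is not automatic for the term $\sgn(w-\a)\bff'(\a+J)\cdot\nabla J\,\varphi$, which need not be continuous in $\a$ at levels where $\{w=\a\}$ has positive measure (the paper likewise restricts to $\a\in\Q$ without comment); the standard remedies (one-sided approximation, or noting that only countably many levels are exceptional) should be invoked. Second, conditions (1)--(2) of Definition~\ref{D:2.1} are better derived from Lemma~\ref{L:5.elem}, which holds for \emph{every} $z$ by density of $C(\bbT^P)$ and needs no equidistribution theorem; moreover, when $N<P$ the orbit $\{z+y(x):x\in\R^N\}$ is a null set in $\bbT^P$, so for $v(t,\cdot)$ a mere $L^1$ class the very definition of $x\mapsto v(t,z+y(x))$ as a measurable function requires one more Fubini argument in $z$ (or the $\B^1$-limit construction of Lemma~\ref{L:5.elem}) rather than a pointwise appeal to Weyl equidistribution.
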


\begin{proof} Except for the independence of $Z$ with respect to $\om\in\Om$, the proof is totally similar to the one of theorem~2.1 of \cite{Pv}, and we refer to the latter for the proof of the first part. We assume that $\F$ has a countable  basis and let $\{\gamma_\ell(\om)={\bf1}_{A_\ell},:\, \ell \in\N\}$ where $\{A_\ell\,:\,\ell\in\N\}$ is  a basis for $\F$.  Also, let us assume that $v_0\in\mathcal{T}$ where 
$\mathcal{T}$ is a countable family of trigonometric polynomials dense in $L^1(\bbT^P)$.
Set $J(\om,z,t,x):=\sum_{k\in\N}h_k(z+y(x))\b_k(t,\om)$, $w(\om,z,t,x)=v(\om,t,z+y(x))-J(\om,z,t,x)$ and $u_0^z(x)=v_0(z+y(x))$.  Let $Z_\ell(v_0)\subset\R^P$ be the set of Lebesgue points $z\in\R^P$ of
\begin{multline}\label{e4.122}
I_\ell(v_0)=\int_\Om \gamma_\ell(\om) \int_0^T\int_{\R^N} \{(|w(\om,z,t,x)-\a|-|u_0^z-\a|)\varphi_t \\
 +\sgn(w(\om,z,t,x)-\a)(\bff(w(\om,z,t,x)+J(\om,z,x,t))-\bff(\a+J(\om,z,t,x)))\cdot\nabla\varphi \\
  -\sgn(w(\om,z,t,x)-\a)\bff'(\a+ J(\om,z,t,x))\nabla J(\om,z,t,x)\varphi\}\,dx\,dt\,d\bbP(\om),
\end{multline} 
where $\alpha\in\Q$ and $\varphi$ runs along a countable dense subset of $C_c^\infty([0,T)\X\R^N)$. We then define
$Z(v_0):=\bigcap_{\ell\in\N} Z_\ell(v_0)$, $Z=\bigcap_{v_0\in\mathcal{T}}Z(v_0)$. We can easily check that $Z$ satisfies the assertion of the theorem. 
 
\end{proof}

Together with Theorem~\ref{T:4.1} the following lemma is also a very important ingredient in  the method of reduction to the periodic case in \cite{Pv}. In the latter, the analogue of \eqref{e5.elem} below is derived from Birkhoff's ergodic theorem. Here we give a different proof which has the advantage to give the validity of the referred equation for all $z_0\in\R^P$.  

\begin{lemma}\label{L:5.elem} If $w\in L^1(\bbT^P)$, $z_0\in\R^P$, $y(x)=(\l_1\cdot x, \cdots,\l_P\cdot x)$,
$x\in\R^N$, then we may define the map $x\mapsto w(z_0+y(x))$ as a function in $\B_*^1(\R^N)$. Moreover, we have for the $\B^1$-norm of  this function
\begin{equation} \label{e5.elem}
\Medint_{\R^N}|w(z_0+y(x))|\,dx=\int_{\bbT^P}|w(y)|\,dy.
\end{equation}
In particular, the mapping $\mathfrak{Y}_{z_0}: w(y)\mapsto w(z_0+y(x))$ is an isometric isomorphism between $L^1(\bbT^P)$ and $\B_*^1(\R^N)$. 
\end{lemma}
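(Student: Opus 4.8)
The plan is to prove Lemma~\ref{L:5.elem} by first establishing the mean-value identity \eqref{e5.elem} for a dense class of functions $w$ where everything is explicit, and then extending by density using the isometry that \eqref{e5.elem} itself provides. The key point that makes this cleaner than the Birkhoff-ergodic-theorem approach of \cite{Pv} is that I want \eqref{e5.elem} to hold for \emph{every} $z_0\in\R^P$, so I cannot discard a null set of shifts; hence I must compute the mean value directly rather than invoking an a.e.\ ergodic statement.

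\textbf{Step 1 (trigonometric polynomials).} First I would take $w$ to be a trigonometric polynomial on $\bbT^P$, say $w(y)=\sum_{\bar k\in J}a_{\bar k}e^{2\pi i\bar k\cdot y}$ with $J\subset\Z^P$ finite. Then $w(z_0+y(x))=\sum_{\bar k\in J}a_{\bar k}e^{2\pi i\bar k\cdot z_0}e^{2\pi i(\bar k\cdot\Lambda)\cdot x}$, where $\bar k\cdot\Lambda:=\sum_{j}k_j\l_j\in\R^N$. This is manifestly an element of $\AP_*(\R^N)$, and since the $\l_j$ are linearly independent over $\Z$, the frequencies $\bar k\cdot\Lambda$ are distinct for distinct $\bar k\in\Z^P$; in particular $\bar k\cdot\Lambda=0$ only for $\bar k=0$. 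Therefore the mean value $\Me(w(z_0+y(\cdot)))$ picks out exactly the $\bar k=0$ term, so $\Me(w(z_0+y(\cdot)))=a_0=\int_{\bbT^P}w(y)\,dy$. This proves the identity \eqref{e5.elem} \emph{without} the absolute values for trigonometric polynomials, for every $z_0$.

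\textbf{Step 2 (passing to $|w|$).} The difficulty is that \eqref{e5.elem} carries absolute values, and $|w|$ is not a trigonometric polynomial. I would handle this by approximation: given a continuous $w\in C(\bbT^P)$, approximate $|w|$ uniformly on $\bbT^P$ by trigonometric polynomials $s_n$ (Stone--Weierstrass), with $\|s_n-|w|\|_{\sup}\to0$. Since $\mathfrak{Y}_{z_0}$ composed with the mean value is bounded by the $\sup$-norm uniformly in $z_0$ (as $\Me(|g(z_0+y(\cdot))|)\le\|g\|_{\sup}$ for any $g\in C(\bbT^P)$), Step~1 applied to each $s_n$ gives $\Me(s_n(z_0+y(\cdot)))=\int_{\bbT^P}s_n(y)\,dy$, and letting $n\to\infty$ yields $\Me(|w|(z_0+y(\cdot)))=\int_{\bbT^P}|w(y)|\,dy$, i.e.\ \eqref{e5.elem} for all continuous $w$ and all $z_0$. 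The pointwise identity $|w|(z_0+y(x))=|w(z_0+y(x))|$ is immediate, so the left-hand side is indeed the $\B^1$-norm of $x\mapsto w(z_0+y(x))$.

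\textbf{Step 3 (extension to $L^1$ and the isomorphism).} For general $w\in L^1(\bbT^P)$, I would take continuous $w_n\to w$ in $L^1(\bbT^P)$. Step~2 applied to $w_n-w_m$ shows that $\{x\mapsto w_n(z_0+y(x))\}$ is Cauchy in $N_1$, hence converges to a well-defined element of $\B_*^1(\R^N)$, which we call $w(z_0+y(x))$; the limit is independent of the approximating sequence by the same isometry. Passing to the limit in \eqref{e5.elem} (valid for each $w_n$) gives \eqref{e5.elem} for $w$. The identity \eqref{e5.elem} says precisely that $\mathfrak{Y}_{z_0}$ is norm-preserving from $L^1(\bbT^P)$ into $\B_*^1(\R^N)$; it is linear and, since trigonometric polynomials $w$ have dense image (their pullbacks generate $\AP_*(\R^N)$, which is dense in $\B_*^1(\R^N)$ by definition), it is surjective. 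Hence $\mathfrak{Y}_{z_0}$ is an isometric isomorphism. I expect the only genuinely delicate point to be Step~2, since the absolute value destroys the trigonometric-polynomial structure; everything else is linear density argument relying on the distinctness of frequencies in Step~1, which is exactly where the $\Z$-linear independence of $\{\l_1,\dots,\l_P\}$ is used.
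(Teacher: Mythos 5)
Your proposal is correct and follows essentially the same route as the paper: establish the mean-value identity for trigonometric polynomials (where the $\Z$-linear independence of $\{\l_1,\dots,\l_P\}$ guarantees that only the constant Fourier mode survives the averaging), extend to all of $C(\bbT^P)$ by uniform density so that the identity applies to $|w|$, and then pass to $L^1(\bbT^P)$ by an $N_1$-Cauchy-sequence argument, with surjectivity coming from the density of pulled-back trigonometric polynomials in $\B_*^1(\R^N)$. The only presentational difference is that you make explicit the frequency-distinctness computation and the uniform bound $\Me(|g(z_0+y(\cdot))|)\le\|g\|_{\sup}$ that the paper leaves implicit.
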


\begin{proof} Consider the elementary trigonometric functions $E_0(y)=1$ and
$$
E_j^\pm(y)=e^{\pm i2\pi y_j}:[-1/2,1/2]^P\to\C,\quad j=1,\cdots,P,
$$ 
which can be viewed as functions on the $P$-dimensional torus $\bbT^P$, by the usual identification 
of $[-1/2,1/2]^P$ with periodic conditions on the boundary and the $P$-dimensional torus.  
We have $E_j^\pm(z_0+y(x))=e^{\pm i2\pi({z_0}_j+ \l_j\cdot x)}$ which clearly belong to $\AP(\R^N)$, since they are indeed periodic with period $(2\pi/(\l_j)_1,\cdots,2\pi/(\l_j)_N)$, $j=1,\cdots,P$. 
Since the (complex valued) continuous periodic functions on $[-1/2,1/2]^P$, or $C(\bbT^P)$,  form a closed algebra generated by the elementary trigonometric functions $E_j^\pm(y)$, $j=0,1,\cdots,P$, and $\AP(\R^N)$ is also a closed algebra, it follows that for any (complex valued) continuous periodic function $F\in C(\bbT^P)$, $F(z_0+y(x))\in\AP(\R^N)$. Observe also that we have, concerning the mean-value of $F(z_0+y(x))$,
$$
\Me(F):=\lim_{R\to\infty}\frac1{R^N}\int_{C_R}F(z_0+y(x))\,dx=\int_{\bbT^P}F(y)\,dy,
$$
since this is true when $F$ is a trigonometric polynomial, that is, when $F$ is a finite linear combination of $E_0(y)$ and trigonometric exponentials of the type $E^{\bar k}(y)=e^{i 2\pi \bar k\cdot y}$, with $\bar k=(k_1,\cdots,k_P)\in\Z^P$,  and these  are dense in 
$C(\bbT^P)$ with respect to the uniform topology. In particular, for any continuous periodic 
$F:\bbT^P\to\C$, the 
$\B^1$-norm of  $F(z_0+y(x))$ verifies
\begin{equation}\label{e5.elem'}
\Medint_{\R^N} |F(z_0+y(x))|\,dx=\int_{\bbT^P}|F(y)|\,dy.
\end{equation}
Since $C(\bbT^P)$ is dense in $L^1(\bbT^P)$, we deduce that, given $w\in L^1(\bbT^P)$, 
we can find a sequence $w_n\in C(\bbT^P)$, $n\in\N$, with $w_n\to w$ in $L^1(\bbT^P)$ and, so,
$w_n(z_0+y(x))$ is a Cauchy sequence in $\B^1(\R^N)$. Therefore, there exists a $g\in\B^1(\R^N)$ such that $w_n(z_0+y(x))\to g$ in $\B^1(\R^N)$. We notice that this function $g$ does not depend on the specific sequence of functions $w_n\in C(\bbT^P)$ converging to $w$ in $L^1(\bbT^P)$. Indeed,
if $\tilde w_n$ is another sequence in $C(\bbT^P)$ with $\tilde w_n\to w$ in $L^1(\bbT^P)$, then, by \eqref{e5.elem'},
 \begin{equation*}
\lim_{n\to\infty}\Medint_{\R^N} |w_n(z_0+y(x))-\tilde w_n(z_0+y(x))|\,dx=\lim_{n\to\infty}\int_{\bbT^P}|w_n(y)-\tilde w_n(y)|\,dy=0,
\end{equation*}
and so $w_n(z_0+y(x))$ and $\tilde w_n(z_0+y(x))$ converge to the same limit in $\B^1(\R^N)$. 
We may denote, without ambiguity, $g(x):= w(z_0+y(x))$. Moreover, since \eqref{e5.elem} holds for $w_n$, it also holds for $w$. 

Finally, concerning the fact that the mapping  ${\mathfrak Y}_{y_0}: w(y)\mapsto w(z_0+y(x))$ is an isometric isomorphism between $L^1(\bbT^P)$ and $\B_*^1(\R^N)$, we have the following. That this mapping is injective it is clear. The fact that it is onto follows from the fact that any $g\in \B_*^1(\R^N)$ may be approximated in $\B_*^1(\R^N)$ by trigonometric polynomials in $\AP_*(\R^N)$, $g^n(y(x))$ with $g^n\in C(\bbT^P)$ and $g^n$ converging in $L^1(\bbT^P)$ to some $w\in L^1(\bbT^P)$. This then proves that $g$ may be represented as $w(z_0+y(x))$, which implies that the mapping is onto.

\end{proof}

The following corollary is useful in connection with Theorem~\ref{T:4.1}. 

\begin{corollary} \label{C:newcor} Let $v:\Om\X(0,T)\X\bbT^P\to\R$, be the periodic entropy solution of \eqref{e4.1}-\eqref{e4.1'} with $v_0\in L^1(\bbT^P)$. Let $Z$ be the set of total measure given by Theorem~\ref{T:4.1}. Let $z\in Z$ be  fixed and  $y(x)=(\l_1\cdot x,\cdots, \l_P\cdot x)$. Then,  the function $u(t,x)=v(t,z+y(x))$ is a $\BAP$-entropy solution of \eqref{e2.1}-\eqref{e2.2} with initial function $v_0(z+y(x))$ and noise functions $g_k^z(x)=h_k(z+y(x))$. 
\end{corollary}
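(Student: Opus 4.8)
The plan is to combine Theorem~\ref{T:4.1} with a density and continuity argument, passing from the case of trigonometric polynomial initial data, already handled by Theorem~\ref{T:4.1}, to the general case $v_0\in L^1(\bbT^P)$. The key device is the isometry $\mathfrak{Y}_z$ of Lemma~\ref{L:5.elem}, which lets us transfer the $L^1(\bbT^P)$-convergence of periodic solutions into $\B_*^1(\R^N)$-convergence of the corresponding almost periodic functions, uniformly in the auxiliary variables.

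First I would approximate $v_0\in L^1(\bbT^P)$ by a sequence $v_0^n\in\mathcal{T}$ of trigonometric polynomials from the countable dense family of Theorem~\ref{T:4.1}, with $v_0^n\to v_0$ in $L^1(\bbT^P)$. Let $v^n$ be the periodic entropy solution of \eqref{e4.1}-\eqref{e4.1'} with datum $v_0^n$, and $v$ the one with datum $v_0$. By the periodic contraction property \eqref{e1.4P}, we have almost surely
\begin{equation*}
\|v^n(t)-v(t)\|_{L^1(\bbT^P)}\le \|v_0^n-v_0\|_{L^1(\bbT^P)}\to0,
\end{equation*}
uniformly in $t\in[0,T]$. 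For a fixed $z\in Z$, Theorem~\ref{T:4.1} already tells us that $u^n(t,x):=v^n(t,z+y(x))$ is a $\BAP$-entropy solution of \eqref{e2.1}-\eqref{e2.2} with datum $v_0^n(z+y(x))$ and noise $h_k(z+y(x))$, since each $v_0^n\in\mathcal{T}$ and $Z$ was chosen so as to work simultaneously for the whole family $\mathcal{T}$. Applying Lemma~\ref{L:5.elem} (with $z_0=z$) to the function $v^n(t)-v(t)\in L^1(\bbT^P)$ converts the above into
\begin{equation*}
\Medint_{\R^N}|v^n(t,z+y(x))-v(t,z+y(x))|\,dx=\|v^n(t)-v(t)\|_{L^1(\bbT^P)}\to0,
\end{equation*}
so $u^n(t,\cdot)\to u(t,\cdot):=v(t,z+y(\cdot))$ in $\B_*^1(\R^N)$, uniformly in $t$, and the same isometry shows the initial data converge, $v_0^n(z+y(\cdot))\to v_0(z+y(\cdot))$ in $\B^1(\R^N)$.

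It then remains to pass to the limit in the defining entropy inequality \eqref{e2.3} for the $u^n$. The main obstacle is the presence of the nonlinear terms $\sgn(w^n-\a)$ together with the flux differences and the $\bff'(\a+J)\cdot\nabla J$ term: $\B^1$-convergence is only an averaged $L^1$ control, so one must argue that, after passing to a subsequence, $w^n\to w$ in $L^1_{\loc}$ (hence a.e.) on $(0,T)\X\R^N$, which is exactly the type of convergence already exploited in the construction of $\BAP$-entropy solutions in Section~\ref{S:3}; the Lipschitz continuity of $\bff$ controls the flux terms, while $\sgn(w^n-\a)\to\sgn(w-\a)$ a.e.\ off the null set $\{w=\a\}$ by dominated convergence, as in the standard Kruzhkov stability argument. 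Noting that the noise functions $g_k^z(x)=h_k(z+y(x))$ and the associated process $J$ are common to all $n$ (they do not depend on the datum), the linear term passes to the limit directly. Since $z\in Z$ was arbitrary, we conclude that $u(t,x)=v(t,z+y(x))$ satisfies Definition~\ref{D:2.1} and is therefore the desired $\BAP$-entropy solution. Alternatively, and more cleanly, one may simply invoke the already-established uniqueness and stability (Theorem~\ref{T:2.2} together with Proposition~\ref{P:2.1}): the limit $u$ is a $\B^1$-limit of $\BAP$-entropy solutions whose data converge, so by the $L^1$-mean contraction \eqref{e2.4} it coincides with the unique $\BAP$-entropy solution with datum $v_0(z+y(\cdot))$, which yields the claim without having to re-derive the entropy inequality by hand.
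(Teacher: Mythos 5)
Your proposal is correct and follows essentially the same route as the paper: approximate $v_0$ by trigonometric polynomials from $\mathcal{T}$, invoke Theorem~\ref{T:4.1} for those, transfer convergence through the isometry of Lemma~\ref{L:5.elem}, and identify the limit with the unique $\BAP$-entropy solution via the contraction property \eqref{e2.4}. The only difference is your initial detour through a direct passage to the limit in the entropy inequality \eqref{e2.3}, which you rightly supersede with the ``cleaner'' stability argument --- that cleaner argument is precisely the paper's proof.
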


\begin{proof} Indeed, from the last lemma it follows, if $v_0^\a(y)$ is a sequence of trigonometric polynomials in $\mathcal{T}$ 
approximating $v_0(y)$ in $L^1(\bbT^P)$, then  
\begin{equation} \label{e5.elem''}
\Medint_{\R^N}|v_0^\a(z+y(x))-v_0(z+y(x))|\,dx=\int_{\bbT^P}|v_0^\a(y)-v_0(y)|\,dy,
\end{equation}
and so $v_0^\a(z+y(x))\to v_0(z+y(x))$ in $\B^1(\R^N)$ as $\a\to\infty$.
Therefore, if $u^{\a,z}(t,x)=v^\a(t,z+y(x))$ is the $\BAP$-entropy solution  of \eqref{e2.1}-\eqref{e2.2} with 
$u^{\a,z}(0,x)=v_0^\a(z+y(x))$, according to Theorem~\ref{T:4.1}, and $u^z(t,x)$ is the corresponding solution with 
 initial function $u(0,x)=v_0(z+y(x))$,   using \eqref{e2.4}, we obtain that $u^{\a,z}\to u^z$ in $L^\infty((0,T); \B^1(\R^N))$, as $\a\to\infty$, 
a.s.\ in $\Om$. Again, since by \eqref{e3.new} the norm of the $u^{\a,z}$'s in $L^1(\Om; L^\infty((0,T);\B^1(\R^N)))$  are
uniformly bounded by a function in $L^1(\Om)$, we conclude by dominated convergence that 
\begin{equation}\label{e.newcor}
u^{\a,z}\to u^z \quad \text {in $L^1(\Om;L^\infty((0,T); \B^1(\R^N)))$}.
\end{equation}
Finally, using again  Lemma~\ref{L:5.elem}, we deduce that we must have $u^z(t,x)=v(t, z+y(x))$, where $v(t,y)$ is the entropy solution 
of \eqref{e4.1}-\eqref{e4.1'}.
\end{proof}

\subsection{The limit as $z\to0$}\label{SS:4.1}  In this subsection we consider the limit as $z\to0$ of the 
$\BAP$-entropy solutions $u^z(t,x)=v(t, z+y(x))$ given by Corollary~\ref{C:newcor} and show that they converge to a $\BAP$-solution of \eqref{e2.1}-\eqref{e2.2}. Observe that, since 
$\B^1_*(\R^N)\subset \B^1(\R^N)$ such $\BAP$-entropy solutions belong to $\B_*^1(\R^N)$.
Similarly, if a $L^1(\bbG_{N})$-semigroup solution is the limit in $L^1(\Om; L^\infty((0,T); L^1(\bbG_N)))$ of  $\BAP$-entropy solutions of \eqref{e2.1}-\eqref{e2.2} belonging a.s.\ to $L^\infty((0,T); \B_*^1(\R^N))$, then, a.s., it belongs to  $L^\infty((0,T); L^1(\bbG_{*N}))$. We then, henceforth, call such $L^1(\bbG_N )$-semigroup solutions  
$L^1(\bbG_{*N})$-semigroup solutions of \eqref{e1.1}-\eqref{e1.2}. 

For the discussion in this subsection we assume the non-degeneracy condition \eqref{e5.NDC}, in Section~\ref{S:6}, to ensure the improved regularity of the periodic entropy solutions proved in \cite{DV2}.  
 
By Lemma~\ref{L:5.elem}, for any $z\in\R^P$,  the mapping ${\mathfrak Y}_z$, $v(y)\mapsto v(z+y(x))$ is an isometric isomorphism $L^1(\bbT^P)\to \B_*^1(\R^N)$. We denote ${\mathfrak Y}_0$ simply by ${\mathfrak Y}$.  
For $s\in\R$, $q\ge1$, let us define 
\begin{equation}\label{e.isometry0}
{\mathcal W}_{*}^{s,q}(\R^N):={\mathfrak Y} [W^{s,q}(\bbT^P)]
\end{equation}
 and
  \begin{equation}\label{e.isometry}
  \|v(y(\cdot))\|_{{\mathcal W}_{*}^{s,q}(\R^N)}=\|v\|_{W^{s,q}(\bbT^P)}.
  \end{equation}
Observe that the images by $\mathfrak{Y}_z$ of subspaces $W\subset L^1(\bbT^P)$ that are invariant by translations, 
that is,   $g(\cdot)\in W$ if and only if $g(z+\cdot)\in W$, for all $z\in\R^P$, are all the same subspace of 
$\B_*^1(\R^N)$. In particular,  ${\mathfrak Y}_z [W^{s,q}(\bbT^P)]={\mathfrak Y} [W^{s,q}(\bbT^P)]$ for all $z\in\R^P$.

\begin{theorem}\label{T:4.2}  Let $z_n\in Z$ be a sequence converging to 0 and let $u^n(t,x)=v(t,z_n+y(x))$ be the $\BAP$-entropy solution given by Corollary~\ref{C:newcor}, where 
$v(t,y)$ is the periodic entropy solution of \eqref{e4.1}-\eqref{e4.1'}, with initial function $v_0\in C(\bbT^P)$. Then, $u^n$ converges in $L^1(\Om;L^1((0,T); L_\loc^1\cap\B^1(\R^n)))$  to a $\BAP$-entropy solution of 
\eqref{e2.1}-\eqref{e2.2} with $u_0(x)=v_0(y(x))$, which then may be represented  as $u(t,x)=v(t,y(x))$. 

As a consequence, let $u_0\in L^1(\bbG_{*N})$, so that  $u_0(x)=v_0(y(x))$ for some $v_0\in L^1(\bbT^P)$.   Then $u(t,x)=v(t,y(x))$ is the $L^1(\bbG_{*N})$-semigroup solution of \eqref{e1.1}-\eqref{e1.2}  where 
$v(t,y)$ is the periodic entropy solution of \eqref{e4.1}-\eqref{e4.1'}, with initial function $v_0$. 
\end{theorem}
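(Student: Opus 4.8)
The plan is to prove Theorem~\ref{T:4.2} in two stages: first establish convergence of the translated solutions $u^n(t,x)=v(t,z_n+y(x))$ as $z_n\to0$, and then identify the limit as the $L^1(\bbG_{*N})$-semigroup solution. The central idea is that translation in the $z$-variable acts by an \emph{isometry} at every level (by Lemma~\ref{L:5.elem}), so the delicate point is not the $\B^1$-norm bounds themselves but rather upgrading weak convergence to strong $L^1_\loc$ convergence, for which we must invoke the improved regularity of the periodic solution guaranteed by the non-degeneracy condition.

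First I would record the uniform bounds. For each fixed $z\in Z$, Corollary~\ref{C:newcor} tells us $u^z(t,x)=v(t,z+y(x))$ is a $\BAP$-entropy solution, and by Lemma~\ref{L:5.elem} together with the estimate \eqref{e5.elem} the $\B^1$-norm of $u^n(t,\cdot)$ equals $\|v(t,\cdot)\|_{L^1(\bbT^P)}$, which is controlled uniformly in $n$ by \eqref{e3.new} and the contraction property. Thus $\{u^n\}$ is bounded in $L^1(\Om;L^\infty((0,T);\B^1(\R^N)))$ independently of $n$. The key gain comes from the regularity estimate of \cite{DV2} valid under \eqref{e5.NDC}: since $v\in L^1(0,T;W^{s,q}(\bbT^P))$, the isometry \eqref{e.isometry} places each $u^n=v(t,z_n+y(\cdot))$ in $\mathcal{W}_*^{s,q}(\R^N)$ with a norm bound \emph{uniform in $n$}, because translation by $z_n$ preserves the $W^{s,q}(\bbT^P)$-norm and $\mathfrak{Y}_{z_n}[W^{s,q}(\bbT^P)]=\mathfrak{Y}[W^{s,q}(\bbT^P)]$ is the same subspace for every $z_n$.

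Next I would extract the limit. On any bounded cube $C_M\subset\R^N$, the fractional Sobolev regularity provides compactness in $L^1(C_M)$, so after passing to a subsequence $u^n\to u$ strongly in $L^1_\loc$ and a.s.\ in $\Om$; the whole sequence converges because the limit is identified uniquely below. Continuity of the map $z\mapsto v(t,z+\cdot)$ in $L^1(\bbT^P)$ (continuity of translation in $L^1$) forces the limit to be $u(t,x)=v(t,y(x))$: indeed $\|v(t,z_n+y(\cdot))-v(t,y(\cdot))\|_{\B^1}=\|v(t,z_n+\cdot)-v(t,\cdot)\|_{L^1(\bbT^P)}\to0$ for a.e.\ $t$, which pins down the limit without ambiguity and also removes the need to pass to a subsequence. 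Passing to the limit in the entropy inequality \eqref{e2.3} for $u^n$ is routine once strong $L^1_\loc$ convergence is in hand, since the flux terms are Lipschitz and the noise functions $g_k^{z_n}(x)=h_k(z_n+y(x))$ converge uniformly to $h_k(y(x))=g_k(x)$ by continuity of $h_k$; hence $u(t,x)=v(t,y(x))$ is a $\BAP$-entropy solution of \eqref{e2.1}-\eqref{e2.2} with $u_0(x)=v_0(y(x))$.

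For the second assertion, given $u_0\in L^1(\bbG_{*N})$ with $u_0(x)=v_0(y(x))$ for some $v_0\in L^1(\bbT^P)$, I would approximate $v_0$ in $L^1(\bbT^P)$ by trigonometric polynomials $v_0^j\in\mathcal{T}$ and apply the first part to each, obtaining $\BAP$-entropy solutions $u^j(t,x)=v^j(t,y(x))$ belonging a.s.\ to $L^\infty((0,T);\B_*^1(\R^N))$. The contraction property \eqref{e2.4}, transported through the isometry of Lemma~\ref{L:5.elem}, shows $\{u^j\}$ is Cauchy in $L^1(\Om;L^\infty((0,T);\B^1(\R^N)))$ with limit $v(t,y(x))$, where $v$ is the periodic entropy solution with datum $v_0$ (using continuity of the periodic solution map in $L^1(\bbT^P)$, i.e.\ \eqref{e1.4P}). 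By Definition~\ref{D:4.2} and the discussion opening Subsection~\ref{SS:4.1}, this limit is precisely the $L^1(\bbG_{*N})$-semigroup solution. The main obstacle I anticipate is justifying the \emph{uniform-in-$n$} Sobolev bound: one must verify carefully that the regularity estimate of \cite{DV2} for $v$ transfers through $\mathfrak{Y}_{z_n}$ with a constant independent of the shift $z_n$, which is exactly where the translation-invariance of $\mathfrak{Y}_z[W^{s,q}(\bbT^P)]$ noted after \eqref{e.isometry} is indispensable.
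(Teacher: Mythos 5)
Your high-level plan is the right one, and several pieces match the paper exactly: the identification of the limit through the isometry of Lemma~\ref{L:5.elem} and the continuity of translations in $L^1(\bbT^P)$, the observation that $\mathfrak{Y}_{z}[W^{s,q}(\bbT^P)]$ is independent of $z$, and the treatment of general $v_0\in L^1(\bbT^P)$ by approximation plus the contraction property are all as in the paper. The genuine gap is in the step that produces strong convergence in $L^1_\loc(\R^N)$. You assert that the uniform bound in $\mathcal{W}_*^{s,q}(\R^N)=\mathfrak{Y}[W^{s,q}(\bbT^P)]$ ``provides compactness in $L^1(C_M)$'' on bounded cubes. It does not: by \eqref{e.isometry} that norm equals the $W^{s,q}(\bbT^P)$-norm of the torus representative, i.e.\ a Besicovitch-type mean over all of $\R^N$, and such a quantity gives no control on any fixed bounded set (the $\B^1$-seminorm of every compactly supported function is zero). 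Worse, when $P>N$ the set $\{z_n+y(x):x\in C_M\}$ has $P$-dimensional measure zero in $\bbT^P$, so the composition $v(t,z_n+y(\cdot))$ is only defined as a $\B^1_*$-limit (that is the whole point of Lemma~\ref{L:5.elem}) and its restriction to $C_M$ is not controlled by any Sobolev norm of $v(t,\cdot)$ on the torus. Rellich--Kondrachov on $\bbT^P$ yields compactness of $\{u^n\}$ in $\B^1_*(\R^N)$ --- but that is the half you already obtained from continuity of translations, not the $L^1_\loc$ half.

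The local regularity that is actually available comes from the averaging-lemma estimates \eqref{e3.10'} and \eqref{e3.100} applied to the $u^n$ themselves, as $\BAP$-entropy solutions on $\R^N$; these are not transferred from the torus, and they are bounds in expectation only, so one cannot extract an a.s.\ convergent subsequence in $L^1_\loc$ from them directly. This is exactly why the paper's proof reruns the stochastic compactness machinery of Section~\ref{S:3}: tightness in $\mathcal{X}=L^1((0,T);L^1_\loc\cap\B_*^1(\R^N))\cap C([0,T];W^{-1,r}_\loc\cap\mathcal{W}_*^{-1,r}(\R^N))$ with compact sets $K_R=K_R^u\cap K_R^v$ (the $K_R^u$ part carrying the local regularity on $\R^N$ as in Proposition~\ref{P:3.2''}, the $K_R^v$ part carrying the torus regularity), followed by Prokhorov, Skorokhod, verification that the limit is an entropy martingale solution, and the Gy\"ongy--Krylov criterion together with the pathwise stability of Theorem~\ref{T:2.2} to obtain convergence in probability of the full sequence. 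Note also that even after extracting an a.s.\ $L^1_\loc$-convergent subsequence, its local limit is not identified with $v(t,y(\cdot))$ by the $\B^1$-convergence alone (the two topologies carry disjoint information); in the paper the identification goes through uniqueness of the $\BAP$-entropy solution. To repair your argument you must reinstate this machinery, or supply an alternative route to almost sure local compactness and to the identification of the local limit.
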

  
  \begin{proof} {\em Step \#1.}  The first part of the statement  is proved following the same steps as the proof of the existence of a  $\BAP$-entropy solution of \eqref{e2.1}-\eqref{e2.2} as the limit of a vanishing viscosity sequence of solutions to the parabolic approximation as it was done in Section~\ref{S:3}, with the following adaptations. Now, besides the sequence $u^n(t,x)$, we also consider the sequence $v^n(t,y):=v(t,z_n+y)$. Recall that $v^n(t,y)$ is the periodic entropy solution  of \eqref{e2.1}-\eqref{e2.2} with initial function $v_0(z_n+y)$ and noise functions $h_k(z_n+y)$, $k\in\N$. We can proceed with the above mentioned compactness method along the usual steps, Kolmogorov's continuity lemma, Prohorov's theorem, Skorokhod's representation theorem, etc., corresponding to Propositions~\ref{P:3.2}, \ref{P:3.2''}, \ref{P:3.2'''}, etc., simultaneously for both $u^n$ and $v^n$. While the steps for the sequence $u^n$ are similar to those for the vanishing viscosity sequence, the same is true for the sequence $v^n$. We combine both procedures transferring the regularity results for $v^n$ over to $u^n$ through the map ${\mathfrak Y}$. 
 
 {\em Step \#2.} Thus, combining the corresponding Proposition~\ref{P:3.2} for $u^n$ and $v^n$, we get $u^n\in C^\l([0,T]; W_\loc^{-1,r}\cap {\mathcal W}_*^{-1,r}(\R^N))$. Concerning the results corresponding to Proposition~\ref{P:3.2''} for both $u^n$ and $v^n$, they can be combined by defining  
$$
 {\mathcal X}:=L^1((0,T);L_\loc^1\cap\B_*^1(\R^N))
 \bigcap C([0,T]; W_\loc^{-1,r}\cap {\mathcal W}_*^{-1,r}(\R^N)).
 $$    
  In the proof of the tightness corresponding to Proposition~\ref{P:3.2''}, tranferring the regularity of $v^n$ to $u^n$, we can now define $K_R=K_R^u\cap K_R^v$, where $K_R^u$ is as $K_R$ in the proof of Proposition~\ref{P:3.2''} and 
  \begin{multline*}
  K_R^v:= \{u\in {\mathcal X}\,:\, \|u\|_{C^\l([0,T];{\mathcal W}_*^{-1,r}(\R^N))}\le R,
\,  \|u\|_{L^1((0,T);{\mathcal W}_*^{s,r}(\R^N))}\le R,\\
  \|u\|_{L^\infty((0,T);\B_*^2(\R^N))}\le R\}.
  \end{multline*}
  The procedures to prove the tightness of the laws of $u^n$ in ${\mathcal X}$ are then totally similar to those in the proof of Proposition~\ref{P:3.2''}. Then Proposition~\ref{P:3.2'''} and the subsequent content of Section~\ref{S:3} may be repeated with no change, and this way we conclude that the sequence $u^n$ converges in $L^1(\Om;L^1((0,T);L_\loc^1\cap \B_*^1(\R^N)))$ to the $\BAP$-entropy solution of \eqref{e2.1}-\eqref{e2.2}, with $u_0(x)=v_0(y(x))$, and by Lemma~\ref{L:5.elem} it may be represented as $u(t,x)=v(t,y(x))$.  Indeed, by Lemma~\ref{L:5.elem} we deduce that $v_0(z+y(x))\to v_0(y(x))$, as $z\to0$,  in $\B^1(\R^N)$. We also have  that $v_0(z+y(x))\to v_0(y(x))$, as $z\to0$,  in $L_\loc^1(\R^N)$, by the continuity of $v_0$.
   Moreover, using again Lemma~\ref{L:5.elem},  we have
  \begin{multline*}
 \bbE\int_0^T\!\!\!\Medint_{\R^N}|u^n(t,x)-v(t,y(x))|\,dx\,dt
 \\=\bbE\int_0^T\!\!\!\Medint_{\R^N}|v(t,z_n+y(x))-v(t,y(x))|\,dx\,dt\\
 =\bbE\int_0^T\int_{\bbT^P}|v(t,z_n+y)-v(t,y)|\,dy\,dt\to0,\quad\text{as $z_n\to0$},
 \end{multline*}
 where we also use the continuity of translations in $L^1(\bbT^P)$. Therefore, $u^n(t,x)\to v(t,y(x))$ 
 in $L^1(\Om\X[0,T]\X\bbG_N)$, and so $v(t,y(x))$ is the $\BAP$-entropy solution $u(t,x)$ of \eqref{e2.1}-\eqref{e2.2} with $u_0(x)=v_0(y(x))$.
 
 {\em Step \#3.} Concerning the final part of the statement, it is proved as follows. When $u_0\in \AP_*(\R^N)$, by Lemma~\ref{L:5.elem} and its proof, $u_0(x)=v_0(y(x))$, for some $v_0\in C(\bbT^P)$, and so by the first part of the statement, $u(t,x)=v(t,y(x))$ is a $\BAP$-entropy solution of \eqref{e2.1}-\eqref{e2.2}. On the other hand, if $u_0\in L^1(\bbG_{*N})$, by Lemma~\ref{L:5.elem}, $u_0(x)=v_0(y(x))$, for some $v_0\in L^1(\bbT^P)$, and, if $v_0^n\in C(\bbT^P)$ is a sequence of continuous functions on the torus converging to $v_0$ in $L^1(\bbT^P)$,
  then, as in the proof of Theorem~\ref{T:3new},  the $\BAP$-entropy solutions with initial functions $u_0^n(x)=v_0^n(y(x))$, $u^n(t,x)=v^n(t,y(x))$,  converge in $L^1(\Om;L^\infty((0,T);L^1(\bbG_{*N})))$ to a 
  $L^1(\bbG_{*N})$-semigroup solution of \eqref{e1.1}-\eqref{e1.2}, which can be represented as 
  $u(t,x)=v(t,y(x))$.  
       
  \end{proof}

 \section{Asymptotic Behavior}\label{S:6}

In this section we study the asymptotic behavior of the $L^1(\bbG_{*N})$-semigroup solution obtained in the last section.  Thus, we keep considering  the algebra
generated by $\{e^{\pm 2\pi i\l_\ell\cdot x}\,:\, \ell=0,1,2,\cdots, P\}$, with $\l_\ell\in\R^N$, $\l_0=0$,   where $\Lambda=\{\l_1, \l_2, \cdots,\l_P\}$ is a $\Z$-linearly independent set in $\R^N$, and we keep  denoting the closure of  this algebra  in 
the $\sup$-norm by $\AP_*(\R^N)$. For any $g\in\AP_*(\R^N)$, we have $\Sp(g)\in\G_\Lambda$, where the latter is the smallest additive group containing $\Lambda$. We also keep assuming, as in the last section,  that the noise functions satisfy $g_k\in\AP_*(\R^N)$, $k\in\N$. For $y(x)=(\l_1\cdot x,\cdots, \l_P\cdot x)$ we have that $g_k(x)=h_k(y(x))$, where $h_k(y)\in C(\bbT^P)$, $k\in\N$.

In what follows we prove the Theorem~\ref{T:1.1} stated in Section~\ref{S:1'}. 

\begin{proof}[Proof of Theorem~\ref{T:1.1}]  We start by establishing the existence of an invariant measure for \eqref{e1.1}. From \eqref{e4.10} we can define   the transition semigroup in $L^1(\bbG_{*N})$ associated with \eqref{e1.1}:
$$
P_t\phi(u_0)=\bbE(\phi(u(t))),\quad \phi\in \mathcal{B}_b(L^1(\bbG_{*N})),
$$
where $u(t)$ denotes the $L^1(\bbG_{*N})$-semigroup solution with initial data $u_0$ at time $t$, and, for a metric space $E$, $\B_b(E)$ is the space of bounded Borel functions on $E$.

A probability measure $\mu$ on $L^1(\bbG_{*N})$ is a said to be an invariant measure for $(P_t)$ if we have
\begin{equation*}
P_t^*\mu =\mu,\quad t\ge0,\quad
\text{where $\la P_t^*\mu, \phi\ra=\la\mu, P_t\phi\ra$, for all $\phi\in \mathcal{B}_b(L^1(\bbG_{*N}))$.}
\end{equation*}
It can be easily checked that $P_t(u_0,\Gamma):=P_t\chi_\Gamma(u_0)$, $u_0\in L^1(\bbG_{*N})$, 
$\Gamma\in\mathcal{E}:=\mathbb{B}(L^1(\bbG_{*N}))$, the $\s$-algebra of Borel sets in $L^1(\bbG_{*N})$,  defines a Markovian transition function.

 Recall \eqref{e.isometry0} and \eqref{e.isometry} and let  $S\subset L^1(\bbG_{*N})$ be defined by
$$
S=\{u\in {\mathcal W}_*^{s,q}(\R^N)\,:\, \int_{\bbG_{*N}}u(x)\,dx=0\},
$$
where, as before, ${\mathcal  W}_*^{s,q}(\bbT^P)= \mathfrak{Y}[W^{s,q}(\bbT^P)]$ and $W^{s,q}(\bbT^P)$ is the Sobolev space such that the kinetic periodic solutions obtained in \cite{DV2} with initial data in $L^3(\bbT^P)$ a.s.\ belong to $L^1((0,T); W^{s,q}(\bbT))$, for all $T>0$.
More specifically, we also recall the decisive estimate  (42) from \cite{DV2},  
  for the kinetic periodic solution on
 $\bbT^P$,
 \begin{equation}\label{e5.DV}
 \bbE\|v\|_{L^1((0,T);W^{s,q}(\bbT^P))}
 \le k_0(\bbE\|v_0\|_{L^3(\bbT^P)}^3+1+T),
 \end{equation}
 for some $q>1$, where $k_0$ depends only on the data of the periodic problem, provided the non-degeneracy condition \eqref{e5.NDC3}, with \eqref{e5.NDC2}, stated below, holds.
We then define, 
$$
\|u\|_S:=\|u\|_{{\mathcal W}_*^{s,q}(\R^N)}.
$$
We notice that $S$ is a subspace of $L^1(\bbG_{*N})$ and $\|\cdot\|_S$ is a norm. Indeed,  since $W^{s,p}(\bbT^P)$ is continuously embedded in $L^1(\bbT^P)$, we have that if $\|u\|_S=0$, then $v=\mathfrak{Y}(u)=0$ in $L^1(\bbT^P)$, which, in turn,  by Lemma~\ref{L:5.elem}, implies that $u=0$ in $L^1(\bbG_{*N})$. The other properties for a norm are obviously checked. Thus, $\|\cdot\|_S$ is a norm in $S$.

Let $S_R:=\{u\in S\,:\, \|u\|_S\le R\}$. We claim that $S_R$ is compact in $L^1(\bbG_{*N})$. Indeed,
given a sequence $u_\a\in S_R$, we can find $v_\a(y(x))$, with $\|v_\a\|_{W^{s,p}(\bbT^P)}\le R$ 
and $u_\a=v_\a(y(x))$ in $L^1(\bbG_{*N})$. By the compactness of the embedding 
$W^{s,p}(\bbT^P)\subset L^1(\bbT^N)$, we may find a subsequence $v_{\a_k}$ converging in $L^1(\bbT^P)$ to some $v\in W^{s,p}(\bbT^P)$, with $\|v\|_{W^{s,p}(\bbT^P)}\le R$. Then,  by Lemma~\ref{L:5.elem}, $u_{\a_k}(x)=v_{\a_k}(y(x))$ converges in $L^1(\bbG_{*N})$  to  certain $u\in S_R$, which proves the compactness of $S_R$.

For a given trigonometrical polynomial $v_0$ with mean-value zero, let $u_0(x)=v_0(y(x))$, and let us define the probability measures 
$$
\la \mu_T,\phi \ra :=\frac1T\int_0^T P_t\phi (u_0) dt =\frac1T\int_0^T \la P_t^*\d_{u_0},\phi\ra dt. \quad \phi\in \B_b(L^1(\bbG_{*N})),
$$
We are going to show that the family  $\{\mu_T\}_{T>0}$  of measures over $\B_b(L^1(\bbG_{*N}))$ is tight, aiming to apply Prohorov's theorem (see \cite{Bi}).  Assume that
 $\phi\in \B_b(L^1(\bbG_{*N}))$ has support in $S_R^c=L^1(\bbG_{*N})\setminus S_R$, where $S_R$ is as above. 
 Thus, we have
 \begin{multline*}
 |\la  \mu_T,\phi\ra|=\left| \frac1T\int_0^T P_t\phi (u_0) dt\right|=\left| \frac1T\int_0^T \bbE\phi (u(t)) dt\right|\le \|\phi\|_\infty\frac1{R} \frac1T
 \bbE\int_0^T \|u(t)\|_S\,dt\\
 = \|\phi\|_\infty\frac1{R}\frac1T \bbE\int_0^T \|v(t)\|_{W^{s,q}(\bbT^P)}\,dt
 \le \frac{C\|\phi\|_\infty}{R},
 \end{multline*} 
for some constant $C>0$ where we have used  \eqref{e5.DV}, which proves the desired tightness of $ \mu_T$, $T>0$. So, by Prohorov's theorem, $\mu_T$ is relatively compact in the space of probability measures and so there is a subsequence $\mu_{T_k}$ and $\mu\in \M_1(L^1(\bbG_{*N}))$ such that $\mu_{T_k}\wto \mu$. 

 Let us prove that $\mu$ is an invariant measure for \eqref{e1.1}. Indeed, for any $t\ge0$ and $\phi\in \B_b(L^1(\bbG_{*N}))$, we have
 \begin{multline*}
 \la P_t^* \mu,\phi\ra =\la  \mu, P_t\phi\ra =\lim_{n\to\infty}\la \mu_{T_n}, P_t\phi\ra\\
 =\lim_{n\to\infty}\frac1{T_n}\int_0^{T_n}P_s (P_t\phi)(u_0)\,ds= \lim_{n\to\infty}\frac1{T_n}\int_0^{T_n}
 P_{t+s}\phi (u_0) \,ds\\
 =\lim_{n\to\infty}\frac1{T_n}\int_t^{T_n+t} P_s\phi(u_0)\,ds=  \lim_{n\to\infty}\frac1{T_n}\int_0^{T_n} P_s\phi(u_0)\,ds\\
 -\lim_{n\to\infty}\frac1{T_n}\int_0^{t} P_s\phi (u_0)\,ds+\lim_{n\to\infty}\frac1{T_n}\int_{T_n}^{T_n+t} P_s\phi (u_0)\,ds\\
 = \lim_{n\to\infty}\frac1{T_n}\int_0^{T_n} P_s\phi(u_0)\,ds=\la\mu,\phi\ra,
 \end{multline*}
 which proves that $\mu$ is an invariant measure for \eqref{e1.1}.


 We are now going to prove  the uniqueness of the invariant measure $\mu$ for the problem \eqref{e1.1}. We again need a non-degeneracy condition on the flux function $\bff(u)$ as in \cite{DV2}. Nevertheless here we need to refine that condition as follows.  Let $\G_{\Lambda}$ be the additive group generated by 
 $\Lambda=\{\l_1,\l_2, \cdots, \l_P\}$.  For $\b\in\G_{\Lambda}$, $\b=n_1\l_1+n_2\l_2+\cdots+n_P\l_P$, $n_j\in\Z$.   We denote ${\bf n}_\b:=(n_1,\cdots,n_P)$ and $|{\bf n}_\b|:=\sqrt{n_1^2+\cdots+n_{P}^2}$. 
   
 Let $\abf:=\bff'$  and define $\iota:(0,\infty)\X\N\to(0,\infty)$ by
 \begin{equation}\label{e5.NDC1}
 \iota(\d,J)=\sup_{\tau\in\R,\b\in \G_{\Lambda}, |{\bf n}_\b|\sim J}
 |\{\xi\in\R\,:\, |\tau+\abf(\xi)\cdot\b|\le \d\}|.
 \end{equation}
 We assume that 
 \begin{equation}\label{e5.NDC}
 \iota(\d,J)\le c_1\left(\frac{\d}{J}\right)^\theta,
 \end{equation}
 for some $c_1>0$ and $0<\theta<1$. 
  
 Observe that in the case where the $u_0(x)=v_0(y(x))$  is a trigonometric polynomial and $v$ is the entropy solution of 
 the corresponding periodic problem  as in Theorem~\ref{T:4.1}, then
 the flux function $\tilde \bff(v)=(\l_1\cdot \bff(v),\cdots,\l_P\cdot \bff(v))$ 
  satisfies the condition, 
 \begin{equation}\label{e5.NDC3}
 \tilde\iota(\d,J)\le c_1\left(\frac{\d}{J}\right)^\theta,
 \end{equation} 
 for all $\d>0$, for some $c_1>0$ and $0<\theta<1$, where
 \begin{equation}\label{e5.NDC2} 
\tilde\iota(\d,J):=\sup_{\tau\in\R,{\bf n}\in \Z^P, |{\bf n}|\sim J}
 |\{\xi\in\R\,\,|\tau+ \tilde \abf(\xi)\cdot {\bf n}|\le \d\}|
 \end{equation}
  with $\tilde \abf=\tilde \bff'$.

 Condition \eqref{e5.NDC3}, with \eqref{e5.NDC2}, is indeed  what is needed in order  for the analysis in \cite{DV2} to be carried out and so the results in \cite{DV2} hold for this apparently weaker condition.

Uniqueness of the invariant measure $\mu$ for \eqref{e1.1} follows from the fact that the periodic entropy solutions of \eqref{e4.1}-\eqref{e4.1'} are  kinetic solutions in the sense of \cite{DV2} and so,
any two of these solutions $v^1, v^2$, with initial data $v_0^1, v_0^2\in L^3(\bbT^P)$, with  mean-value zero, respectively, satisfy (by the last equation of section~4 in \cite{DV2})
\begin{equation}\label{e5.8}
\lim_{t\to\infty}\|v^1(t)-v^2(t)\|_{L^1(\bbT^P)}=0,\, a.s.
\end{equation}
{}From \eqref{e5.8} we obtain, for any two  $L^1(\bbG_{*N})$-entropy solutions of \eqref{e2.1}, with trigonometric polynomials $u_0^i(x)=v_0^i(y(x))$, $i=1,2$,  with zero mean-value, as initial data and  noise coefficients $g_k(x)=h_k(y(x))$, $k\in\N$, the equation   
\begin{equation}\label{e5.9}
\lim_{t\to\infty}\|u^1(t)-u^2(t)\|_{L^1(\bbG_N)}=0,\, a.s.
\end{equation}
 and this, together with \eqref{e2.4},  implies the uniqueness of the invariant measure
 $\mu$ for \eqref{e1.1}.  Indeed, let $\nu$ be an invariant measure for \eqref{e1.1}, and  $P_t$ the corresponding transition semigroup as above, that is, $P_t^*\nu=\nu$.  Given $\phi\in \Lip(L^1(\bbG_{*N}))$ let $u_0(\cdot)=v_0(y(\cdot))$, for a trigonometrical polynomial $v_0(y)$ with mean-value zero. Let $\mu$ be the invariant measure constructed above, that is $\mu=\lim_{T_n\to\infty} \mu_{T_n}$, where $\mu_{T}=\frac1T\int_0^T P_t^*\d_{u_0}\,dt$. We have 
 \begin{multline*}
 |\la\nu,\phi\ra-\la\mu_{T_n},\phi\ra|=\frac1{T_n}\int_0^{T_n}|\la P_t^*\nu,\phi\ra-\la P_t^*\d_{u_0},\phi\ra|\,dt\\
= \frac1{T_n}\int_0^{T_n}\left| \int_{L^1(\bbG_{*N})} (P_t\phi( v_0)-P_t\phi(u_0))\,d\nu(v_0) \right|\,dt\\
 \le  \frac1{T_n}\int_0^{T_n}\int_{L^1(\bbG_{*N})} C_\phi\, \bbE\| v(\cdot,t)- u(\cdot,t)\|_{L^1(\bbG_{*N})}\,d\nu(v_0) \,dt\\
\le \int_{L^1(\bbG_{*N})}  C_\phi \,\bbE\frac1{T_n}\int_0^{T_n} \| v(\cdot,t)-u(\cdot,t)\|_{L^1(\bbG_{*N})}\,dt\,d\nu(v_0)\to 0, 
\end{multline*}
where $v(\cdot,t),u(\cdot,t)$ are the $L^1(\bbG_{*N})$-entropy solutions associated with the initial data $v_0,u_0$, respectively. 
Hence, making $T_n\to\infty$, using \eqref{e5.9},  we conclude
$$
 |\la\nu,\phi\ra-\la \mu, \phi\ra|=0,
 $$
 and so
 \begin{equation}\label{e50.uni}
 \la\nu,\phi\ra=\la \mu, \phi\ra
 \end{equation}
 for all $\phi\in\Lip(L^1(\bbG_{*N}))$. Now, it is easy to extend \eqref{e50.uni} to all 
 $\phi\in\B_b(L^1(\bbG_{*N}))$. Indeed,  we first extend it to $\phi={\bf1}_F$, where $F$ is any closed subset of $L^1(\bbG_{*N})$, by using a sequence of Lipschitz continuous functions, $\phi^\ve$, converging everywhere to ${\bf 1}_F$, e.g.,  $\phi^\ve(\cdot)=\max\{0,1-\frac1\ve\dist(\cdot,F)\}$.  Then, by the regularity of the probability measures $\mu$ and $\nu$,   we extend \eqref{e50.uni} to 
 $\phi={\bf 1}_A$,  for any Borel set $A$, that is,  $\nu(A)=\mu(A)$ for all Borel sets of $L^1(\bbG_{*N})$, which  implies the uniqueness of the invariant measure  $\mu$ for \eqref{e1.1}. 
 
 \end{proof}

\end{document}